\newtheorem{theorem}{Theorem}[section]
\newtheorem{lemma}[theorem]{Lemma}
\newtheorem{proposition}[theorem]{Proposition}
\newtheorem{corollary}[theorem]{Corollary}
\theoremstyle{definition}
\newtheorem{notation}[theorem]{Notation}
\theoremstyle{definition}
\newtheorem{definition}[theorem]{Definition}
\theoremstyle{remark}
\newtheorem{remark}[theorem]{Remark}
\newtheorem*{remark*}{Remark}
\numberwithin{equation}{section}
\newenvironment{proof-sketch}{\textit{Sketch of proof.~}}{\quad}
\newcommand{\R}{\mathbb{R}}
\newcommand{\C}{\mathbb{C}}
\newcommand{\N}{\mathbb{N}}
\newcommand{\dx}{\mathrm{\,d}}
\newcommand{\n}{\mathbf{n}}
\newcommand{\x}{\mathbf{x}}
\newcommand{\pa}{\partial}
\newcommand{\eps}{\varepsilon}
\renewcommand{\Re}{\mathrm{Re}}
\newcommand{\D}{\mathsf{Dom}}
\newcommand{\Bk}{\color{black}}
\title[An infinite mass limit]{The MIT Bag Model as an infinite mass limit}
\author[N. Arrizabalaga]{Naiara Arrizabalaga}
\address[N. Arrizabalaga]{Departamento de Matem\'aticas, Universidad del Pa\'is Vasco/Euskal Herriko Unibertsitatea (UPV/EHU), 48080 Bilbao, Spain}
\email{naiara.arrizabalaga@ehu.eus}
\author[L. Le Treust]{Lo\"ic Le Treust}
\address[L. Le Treust]{Aix Marseille Univ, CNRS, Centrale Marseille, I2M, Marseille, France}
\email{loic.le-treust@univ-amu.fr}
\author[A. Mas]{Albert Mas}
\address[A. Mas]{Departament de Matem\`atiques i Inform\`atica,
Universitat de Barcelona. Gran Via de les Corts Catalanes 585,
08007 Barcelona, Spain}
\email{albert.mas@ub.edu}
\author[N. Raymond]{Nicolas Raymond}
\address[N. Raymond]{D\'{e}partement de Math\'{e}matiques (LAREMA), Universit\'{e} d'Angers, Facult\'{e} des Sciences, 2, boulevard Lavoisier, 49045 Angers cedex 01, France}
\email{nicolas.raymond@univ-rennes1.fr}
\keywords{Dirac operator, relativistic particle in a box, MIT bag model, spectral theory}                                  
\subjclass[2010]{35J60, 35Q75, 49J45, 49S05, 81Q10, 81V05, 35P15, 58C40}
\begin{document}


\begin{abstract}
The Dirac operator, acting in three dimensions, is considered. Assuming that a large mass $m>0$ lies outside a smooth and bounded open set $\Omega\subset\R^3$, it is proved that its spectrum is approximated by the one of the Dirac operator on 
$\Omega$ with the MIT bag boundary condition. The approximation, which is developed up to and error of order $o(1/\sqrt m)$, is carried out by introducing tubular coordinates in a neighborhood of $\partial\Omega$ and analyzing the corresponding one dimensional optimization problems in the normal direction. 
\end{abstract}
\maketitle
\tableofcontents

\section{Introduction}
\subsection{Context}
This paper is devoted to the spectral analysis of the Dirac operator with high scalar potential barrier in three dimensions. More precisely, we will assume that there is a large mass $m$ outside a smooth and bounded open set $\Omega$. From physical considerations, see \cite{bogoliubov1987,MIT061974}, it is expected that, when $m$ becomes large, the eigenfunctions of low energy do not visit $\R^3\setminus\Omega$ and tend to satisfy the so-called MIT bag condition on $\partial\Omega$. This boundary condition, that we will define in the next section, is usually chosen by the physicists \cite{johnson,MIT061974,PhysRevD.12.2060}, in order to get a vanishing normal flux at the bag surface. It was originally introduced by Bogolioubov in the late $60's$ \cite{bogoliubov1987} to describe the confinement of the quarks in the hadrons with the help of an infinite scalar potential barrier outside a fixed set $\Omega$. In the mid $70's$, this model has been revisited into a shape optimization problem named \emph{MIT bag model} \cite{johnson,MIT061974,PhysRevD.12.2060} in which the optimized energy takes the form
\[
	\Omega\mapsto \lambda_1(\Omega) + b|\Omega|,
\] 
where $\lambda_1(\Omega)$ is the first nonnegative eigenvalue of the Dirac operator with the boundary condition introduced by Bogolioubov, $|\Omega|$ is the volume of $\Omega\subset\R^3$ and $b>0$.
The interest of the bidimensional equivalent of this model has recently been renewed with the study of graphene where this condition is sometimes called \enquote{infinite mass condition}, see \cite{PhysRevB.77.085423,MR901725}. The aim of this paper is to provide a mathematical justification of this terminology, and extend to dimension three the work \cite{stockmeyer2016infinite}.

\subsection{The Dirac operator with large effective mass}
In the whole paper,  $\Omega$ denotes a fixed bounded domain of $\R^3$ with regular boundary. The Planck's constant and the velocity of light are assumed to be equal to $1$.

Let us recall the definition of the Dirac operator associated with the energy of a relativistic particle of mass $m_0$ and spin $\frac{1}{2}$, see \cite{Thaller1992}. The Dirac operator is a first order differential operator $(H,\D(H))$, acting on $L^2(\R^3;\mathbb{C}^4)$ in the sense of distributions, defined by 
\begin{equation}
H=\alpha\cdot D+m_0\beta \,,\qquad D=-i\nabla\,,
\end{equation}
where $\D(H) = H^1(\R^3; \C^4)$,  $\alpha=(\alpha_1,\alpha_2,\alpha_3)$ and  $\beta$ 
are the $4\times4$ Hermitian and unitary matrices given by
\[
	\beta=\left(\begin{array}{cc}1_2&0\\0&-1_2\end{array}\right),\;
	\alpha_k=\left(\begin{array}{cc}0&\sigma_k\\\sigma_k&0\end{array}\right) \mbox{ for }k=1,2,3\,.	
\]
%
Here, the Pauli matrices $\sigma_1,\sigma_2$ and $\sigma_3$ are defined by
\[
	\sigma_1 = \left(
		\begin{array}{cc}
			0&1\\1&0
		\end{array}
	\right),
	\quad
	\sigma_2 = \left(
		\begin{array}{cc}
			0&-i\\i&0
		\end{array}
	\right),
	\quad
	\sigma_3 = \left(
		\begin{array}{cc}
			1&0\\0&-1
		\end{array}
	\right)\,,
\]
and $\alpha \cdot X$ denotes $\sum_{j=1}^3\alpha_j X_j$ for any $X = (X_1,X_2,X_3)$.

In this paper, we consider particles with large effective mass $m\gg m_0$ outside $\Omega$. Their kinetic energy is associated with the self-adjoint operator $(H_m, \D(H_m))$ defined by
\[
H_{m}=\alpha\cdot D+(m_0 + m\chi_{\Omega'})\beta \,,
\]
where $\Omega'$ is the complementary set of $\overline{\Omega}$, $\chi_{\Omega'}$ is the characteristic function of $\Omega'$ and $\D(H_m) = H^1(\R^3;\C^4)$.
%
%
\begin{notation}\label{not.def1}
	In the following, $\Gamma := \pa \Omega$ and for all $\x\in\Gamma$, $\n(\x)$ is the outward-pointing unit normal vector to the boundary, $L(\x)=d\n_{\x}$ denotes the second fundamental form of the boundary and
	\begin{equation*}
		\kappa(\x)=\mathsf{Tr}\, L(\x)\, \mbox{ and }K(\x) =  \det \, L(\x)
	\end{equation*}
	are the mean curvature and the Gauss curvature of $\Gamma$ respectilvely.
\end{notation}
\Bk

\begin{definition}\label{def.MIT}
	The MIT bag Dirac operator $(H^\Omega,\D(H^\Omega))$ is defined on the domain
	\[
		\mathsf{Dom}(H^\Omega) = \{\psi\in H^1(\Omega;\mathbb{C}^4)~:~\mathcal{B}\psi = \psi~\text{on}~\Gamma\}\,,\qquad\mbox{ with }\quad\mathcal{B}=-i\beta(\alpha\cdot \n)\,,
	\]
	by $H^\Omega\psi=H\psi$ for all $\psi\in\D(H^\Omega)$. Observe that the trace is well-defined by a classical trace theorem.
\end{definition}
\begin{notation} 
We denote by $\braket{\cdot,\cdot}$ the $\C^4$ scalar product (antilinear w.r.t. the left argument) and by $\braket{\cdot,\cdot}_{U}$ the $L^2$ scalar product on the set $U\subset\R^3$.
\end{notation}

\begin{notation} 
We define, for every $\n\in \mathbb{S}^2$, the orthogonal projections 
\begin{equation}\label{eq:boundary_projections}
	\Xi^\pm = \frac{1_4\pm\mathcal{B}}{2}
\end{equation}
 associated with the eigenvalues $\pm1$ of the matrix $\mathcal{B}$.
\end{notation}

\subsection{Squared operators, heuristics, and results}
The aim of this paper is to relate the spectra of $H_{m}$ and $H^\Omega$ in the limit $m\to+\infty$. 

\begin{notation}\label{not.lambda}
Let $(\lambda_k)_{k\in \N^*}$ and $(\lambda_{k,m})_{k\in \N^*}$
denote the increasing sequence of eigenvalues of the operator $|H^\Omega|$ and $|H_m|$, respectively, each one being repeated according to its multiplicity.  By the $\min-\max$ characterization, we have
\begin{equation*}
\begin{split}
	\lambda_k &= 	\inf_{\tiny
					\begin{array}{c}
						V\subset \D(H^\Omega),
						\\
						\dim V = k,
					\end{array}}
				\sup_{\tiny
					\begin{array}{c}
						 \varphi\in V,
						\\
						\norm{ \varphi}_{L^2(\Omega)} = 1,
				\end{array}}
				\norm{H^\Omega \varphi}_{L^2(\Omega)}
				\\
				&=
				\sup_{\tiny \{\psi_{1},\ldots, \psi_{k-1}\}\subset \D(H^\Omega),}
				\inf_{\tiny
				\begin{array}{c}
					 \varphi\in \mathrm{span}(\psi_{1},\ldots, \psi_{k-1})^\perp,
					\\
					\norm{ \varphi}_{L^2(\Omega)} = 1,
				\end{array}}
				\norm{H^\Omega  \varphi}_{L^2(\Omega)}  ,
\end{split}
\end{equation*}
and
\begin{equation*}
\begin{split}
	\lambda_{k,m} &= 	\inf_{\tiny
					\begin{array}{c}
						V\subset H^1(\R^3;\C^4),
						\\
						\dim V = k,
					\end{array}}
				\sup_{\tiny
					\begin{array}{c}
						 \varphi\in V,
						\\
						\norm{ \varphi}_{L^2(\R^3)} = 1,
				\end{array}}
				\norm{H_m  \varphi}_{L^2(\R^3)}
				\\
				&=
				\sup_{\tiny \{\psi_{1},\ldots, \psi_{k-1}\}\subset H^1(\R^3;\C^4),}
				\inf_{\tiny
				\begin{array}{c}
					 \varphi\in \mathrm{span}(\psi_{1},\ldots, \psi_{k-1})^\perp,
					\\
					\norm{ \varphi}_{L^2(\R^3)} = 1,
				\end{array}}
				\norm{H_m  \varphi}_{L^2(\R^3)} ,
\end{split}
\end{equation*}
for $k\in \N^*$ and $m>0$. Here, $\N^* := \N\setminus\{0\}$.
\end{notation}

\subsubsection{The quadratic forms}
At first sight, it might seem surprising that $\lambda_{k}$ and $\lambda_{k,m}$ are related, especially because of the boundary condition of $H^\Omega$. It becomes less surprising when computing the squares of the operators. This is the purpose of the following lemma.

 \begin{lemma}\label{lem:quadf1}
 	Let $\varphi \in \D(H^\Omega)$ and $\psi\in H^1(\R^3; \C^4)$. Then
	\begin{equation}\label{eq:quadMIT}
		\|H^\Omega \varphi\|^2_{L^2(\Omega)} = \mathcal{Q}^{\rm{int}}(\varphi) := \|\nabla \varphi\|^2_{L^2(\Omega)} + \int_{\Gamma}\left(\frac{\kappa}{2}+m_0\right)|\varphi|^2\dx \Gamma + m_0^2\norm{\varphi}_{L^2(\Omega)}^2,
	\end{equation}
	where $\kappa$ is defined in Notation \ref{not.def1},
	and
\begin{equation}\label{eq:quadIM}\begin{split}
		\|H_m \psi\|^2_{L^2(\R^3)} &= 
			\|\nabla\psi\|_{L^2(\Omega)}^2 
			+\|\nabla\psi\|_{L^2(\Omega')}^2 +  \|(m_0 +m\chi_{\Omega'}) \psi\|_{L^2(\R^3)}^2\\
 			&\quad-m\Re\langle\mathcal{B}\psi, \psi\rangle_{\Gamma}\\
			&=\|\nabla\psi\|_{L^2(\Omega)}^2 
			+\|\nabla\psi\|_{L^2(\Omega')}^2 +  \|(m_0 +m\chi_{\Omega'}) \psi\|_{L^2(\R^3)}^2\\
			&\quad+m\|\Xi^-\psi\|^2_{L^2(\Gamma)}-m\|\Xi^+\psi\|^2_{L^2(\Gamma)}\,.
		\end{split}	
\end{equation}	
 \end{lemma}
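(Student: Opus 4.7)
Both identities follow by squaring the Dirac operator and applying integration by parts, exploiting throughout the Clifford relations $\alpha_j\alpha_k+\alpha_k\alpha_j=2\delta_{jk}$, $\alpha_j\beta+\beta\alpha_j=0$, together with $(\alpha\cdot\n)^2=1_4$, $\mathcal B^2=1_4$, and $(\alpha\cdot\n)\beta=-i\mathcal B$ on $\Gamma$. By a density argument I may assume that all functions involved are smooth up to $\Gamma$. Setting $\sigma:=\alpha\cdot D$, the decomposition $\alpha_j\alpha_k=\delta_{jk}+\tfrac{1}{2}[\alpha_j,\alpha_k]$ (whose antisymmetric part annihilates $\pa_j\pa_k$ in the bulk), combined with integration by parts and the splitting $\nabla=\n\pa_\n+\nabla_\Gamma$, produces the two building blocks
\begin{align*}
\|\sigma u\|^2_{L^2(\Omega)}&=\|\nabla u\|^2_{L^2(\Omega)}+\int_\Gamma\langle u,(\alpha\cdot\n)(\alpha\cdot\nabla_\Gamma)u\rangle\dx\Gamma,\\
2\Re\langle\sigma u,\beta u\rangle_\Omega&=i\int_\Gamma\langle u,(\alpha\cdot\n)\beta u\rangle\dx\Gamma=\int_\Gamma\langle u,\mathcal B u\rangle\dx\Gamma.
\end{align*}

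For \eqref{eq:quadMIT}, I apply these to $u=\varphi\in\D(H^\Omega)$ in the expansion $\|H\varphi\|^2=\|\sigma\varphi\|^2+2m_0\Re\langle\sigma\varphi,\beta\varphi\rangle+m_0^2\|\varphi\|^2$ and invoke $\mathcal B\varphi=\varphi$ in the cross term to extract the $m_0\int_\Gamma|\varphi|^2\dx\Gamma$ summand. It only remains to establish
\[
I:=\int_\Gamma\langle\varphi,(\alpha\cdot\n)(\alpha\cdot\nabla_\Gamma)\varphi\rangle\dx\Gamma=\tfrac{1}{2}\int_\Gamma\kappa|\varphi|^2\dx\Gamma.
\]
Starting from $(\alpha\cdot\n)(\alpha\cdot e_j)=-(\alpha\cdot e_j)(\alpha\cdot\n)$ for tangent $e_j$ and the Leibniz rule $\pa_{e_j}((\alpha\cdot\n)\varphi)=(\alpha\cdot Le_j)\varphi+(\alpha\cdot\n)\pa_{e_j}\varphi$, together with $(\alpha\cdot\n)\varphi=i\beta\varphi$ on $\Gamma$ and $\alpha_j\beta+\beta\alpha_j=0$, I derive the pointwise identity on $\Gamma$,
\[
(\alpha\cdot\n)(\alpha\cdot\nabla_\Gamma)\varphi=i\beta(\alpha\cdot\nabla_\Gamma)\varphi+\kappa\varphi,
\]
where $\kappa$ appears via $\sum_j(\alpha\cdot e_j)(\alpha\cdot Le_j)=\mathsf{Tr}\, L$ computed in a principal frame diagonalising $L$. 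Pairing against $\varphi$ and integrating gives $I=\int_\Gamma i\langle\varphi,\beta(\alpha\cdot\nabla_\Gamma)\varphi\rangle\dx\Gamma+\int_\Gamma\kappa|\varphi|^2\dx\Gamma$; on the other hand, moving $\beta$ to the left by Hermiticity and substituting the equivalent form $\beta\varphi=-i(\alpha\cdot\n)\varphi$ of the MIT condition turns the first right-hand integral into $-I$, so that $2I=\int_\Gamma\kappa|\varphi|^2\dx\Gamma$.

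For \eqref{eq:quadIM}, I split $\|H_m\psi\|^2_{L^2(\R^3)}$ as the sum of the squared norms on $\Omega$ and on $\Omega'$. On each subdomain $H_m$ acts with constant mass ($m_0$ on $\Omega$, $m_0+m$ on $\Omega'$), so the two building blocks apply with the outward normal $\n$ on $\Omega$ and $-\n$ on $\Omega'$. Since $\psi\in H^1(\R^3;\C^4)$ has matching traces from the two sides, the two $(\alpha\cdot\n)(\alpha\cdot\nabla_\Gamma)$ boundary contributions cancel by antisymmetry in $\n$ (no $\kappa/2$ identity is needed here), leaving $\|\nabla\psi\|^2_{L^2(\Omega)}+\|\nabla\psi\|^2_{L^2(\Omega')}$. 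The cross-term contributions combine as $\bigl(m_0-(m_0+m)\bigr)\int_\Gamma\langle\psi,\mathcal B\psi\rangle\dx\Gamma=-m\Re\langle\mathcal B\psi,\psi\rangle_\Gamma$, and the diagonal mass contributions assemble into $\|(m_0+m\chi_{\Omega'})\psi\|^2_{L^2(\R^3)}$. The second equality of \eqref{eq:quadIM} then follows from $\mathcal B=\Xi^+-\Xi^-$ and $\Xi^+\Xi^-=0$, which yield $\Re\langle\mathcal B\psi,\psi\rangle_\Gamma=\|\Xi^+\psi\|^2_{L^2(\Gamma)}-\|\Xi^-\psi\|^2_{L^2(\Gamma)}$.

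The main obstacle is the self-referential identity $I=-I+\int_\Gamma\kappa|\varphi|^2\dx\Gamma$: this is where the geometry of $\Gamma$ enters through the shape operator $L$, and where the MIT boundary condition has to be used twice. The first use produces the curvature term $\kappa\varphi$ via tangential differentiation of $(\alpha\cdot\n)\varphi=i\beta\varphi$, while the second converts the surviving $i\beta(\alpha\cdot\nabla_\Gamma)$ contribution back into $-I$ via the equivalent form $\beta\varphi=-i(\alpha\cdot\n)\varphi$. Everything else reduces to standard Clifford algebra and the divergence theorem.
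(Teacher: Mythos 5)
Your proof is correct, and it is more self-contained than the paper's. For identity \eqref{eq:quadMIT} the paper simply cites a computation in reference \cite[Section A.2]{MR3663620}, whereas you reconstruct the argument from scratch. Your derivation of the curvature term via tangential differentiation of the boundary condition $(\alpha\cdot\n)\varphi=i\beta\varphi$, the anticommutation $(\alpha\cdot\n)(\alpha\cdot e_j)=-(\alpha\cdot e_j)(\alpha\cdot\n)$, and the identity $\sum_j(\alpha\cdot e_j)(\alpha\cdot Le_j)=\mathsf{Tr}\,L$ in a principal frame, is the standard route and is sound; the ``self-referential'' step $I=-I+\int_\Gamma\kappa|\varphi|^2\dx\Gamma$ is indeed the crux and you close it cleanly by using the MIT condition a second time.

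For \eqref{eq:quadIM} your route is genuinely different in bookkeeping from the paper's, though it uses the same ingredients. You split $\|H_m\psi\|^2$ over $\Omega$ and $\Omega'$, apply your two building blocks on each subdomain with the appropriate outward normal, and observe that the $(\alpha\cdot\n)(\alpha\cdot\nabla_\Gamma)$ boundary terms cancel by continuity of the trace while the cross terms combine to $-m\Re\langle\mathcal{B}\psi,\psi\rangle_\Gamma$. The paper avoids the splitting: since $\psi\in H^1(\R^3;\C^4)$, it uses $\|\alpha\cdot D\psi\|^2_{L^2(\R^3)}=\|\nabla\psi\|^2_{L^2(\R^3)}$ and $\Re\langle\alpha\cdot D\psi,\beta\psi\rangle_{\R^3}=0$ directly (no boundary on $\R^3$), so that the only integration by parts needed is the one on $\Omega'$ multiplied by the mass jump $m$. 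The paper's version is marginally shorter; yours has the advantage of making manifest that the same two building blocks drive both \eqref{eq:quadMIT} and \eqref{eq:quadIM}, and that the curvature term is absent precisely because $\psi$ has matching traces across $\Gamma$ rather than satisfying the MIT condition. Both are valid; no gap.
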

 \begin{proof}
 The identity \eqref{eq:quadMIT} is proved for instance in \cite[Section A.2]{MR3663620}.
 Let $\psi\in H^1(\R^3; \C^4)$. Then, by integrations by parts,
 \[
 \begin{split}
 \|H_m\psi\|_{L^2(\R^3)}^2 &
 = 
 \|\alpha\cdot D\psi\|_{L^2(\R^3)}^2 +  \|(m_0 +m\chi_{\Omega'}) \psi\|_{L^2(\R^3)}^2
 +2m\Re\langle\alpha\cdot D \psi, \beta \psi\rangle_{\Omega'}
 \\
 &=
  \|\nabla\psi\|_{L^2(\R^3)}^2 +  \|(m_0 +m\chi_{\Omega'}) \psi\|_{L^2(\R^3)}^2
 -m\Re\langle\mathcal{B}\psi, \psi\rangle_{\Gamma}.
 \end{split}
 \]
Then, note that, for all $\psi\in H^1(\R^3;\C^4)$,
\[\Re\langle\mathcal{B}\psi, \psi\rangle_{\Gamma}=\|\Xi^+\psi\|^2_{L^2(\Gamma)}-\|\Xi^-\psi\|^2_{L^2(\Gamma)}\,.\] 

 \end{proof}
Considering \eqref{eq:quadIM} leads to the following minimization problem, for $v\in H^{1}(\Omega)$,
 \begin{equation}\label{pb:extopt}
	\Lambda_m(v) = \inf \{
		\mathcal{Q}_m(u)\ , u\in V_v
	\}\,,\quad \mathcal{Q}_m(u) = \|\nabla u\|^2_{L^2(\Omega')} + m^2\|u\|^2_{L^2(\Omega')}\,,
 \end{equation}
 where 
 \[V_v = \{u\in H^1(\Omega',\mathbb{C}^4) \mbox{ s.t. }u = v\mbox{ on }\Gamma\}\,.\] 
 %
A classical extension theorem (see \cite[Section 5.4]{evans1998partial}) ensures that $V_v$ is non-empty.
 \subsubsection{Heuristics}\label{sec:heuri}
In this paper, we will analyse the behavior of $\Lambda_{m}(v)$ and prove in particular (see Proposition \ref{prop:ext}) that there exists $C>0$ such that for $m$ large, and all $v\in H^1(\Omega;\C^4)$ 
\begin{equation}\label{eq.Lambdam1}
o(1)\geq\Lambda_{m}(v)-\left(m\|v\|^2_{L^2(\Gamma)} + \int_\Gamma \frac{\kappa}{2}|v|^2\dx \Gamma\right)\geq 
 -\frac{C}{m}\|v\|^2_{H^1(\Gamma)}
\,.
\end{equation}
Replacing $m$ by $m_0+m$ in \eqref{eq.Lambdam1}, we get, for all $\psi\in H^1(\R^3;\C^4)$,
\begin{multline}\label{eq.lbHm}
\|H_m\psi\|_{L^2(\R^3)}^2\geq \|\nabla\psi\|^2_{L^2(\Omega)} + m_0^2\norm{\psi}_{L^2(\Omega)}^2
\\
+ \int_\Gamma\left(\frac{\kappa}{2}+m_0\right)|\psi|^2\dx \Gamma+2m\|\Xi^-\psi\|^2_{L^2(\Gamma)} - \frac{C}{m}\|\psi\|^2_{L^2(\Gamma)}\,.
\end{multline}
Take any eigenfunction $\varphi$ of $H^\Omega$ and consider a minimizer $u_{\varphi}$ of \eqref{pb:extopt} for $v=\varphi$ and $m$ replaced by $m+m_0$.
Then, letting $\psi=\mathds{1}_{\Omega} \varphi+\mathds{1}_{\Omega'}u_{\varphi}\in  H^1(\R^3;\C^4)$, 
we get
\[\|H_{m}\psi\|_{L^2(\R^3)}^2
= 
\|\nabla\varphi\|^2_{L^2(\Omega)}
+ m_0^2\norm{\psi}_{L^2(\Omega)}^2
+\Lambda_{m+m_0}(\varphi)-m\|\Xi^+\varphi\|_{L^2(\Gamma)}^2\,.\]
With \eqref{eq.Lambdam1} at hand, we deduce that, for all $j\in\N^*$,
\[\lambda^2_{j,m}\leq \lambda^2_{j}+o(1)\,.\]
Conversely, if we are interested in the eigenvalues of $(H_{m})^2$ that are of order $1$ when $m\to+\infty$, we see from \eqref{eq.lbHm} that the corresponding normalized eigenfunctions must satisfy $\Xi^-\psi=\mathscr{O}(m^{-1})$ and, in particular, $\mathcal{B}\psi=\psi+\mathscr{O}(m^{-1})$. Thus, we get formally, for all $j\in\N^*$,
\[\lambda^2_{j,m}\geq \lambda^2_{j}+o(1)\,.\]
The aim of this paper is to make this heuristics rigorous. We can now state our main theorem.

\begin{theorem}\label{thm:main}
The singular values of $H_{m}$ can be estimated as follows:
	\begin{enumerate}[\rm (i)]
		\item $\lim_{m\to+\infty} \lambda_{k,m} = \lambda_k$, for any $k\in \N^*$.
	
		\item Let $k_1\in \N^*$ be the multiplicity of the first eigenvalue $\lambda_1$ of $|H^\Omega|$. For all $k\in\{1,\dots,k_1\}$, we have
		\[
			\lambda_{k,m} = \left(\lambda_1^2+\frac{\nu_{k}}{m}+o\left(\frac{1}{m}\right)\right)^{1/2},
		\]
		where
		\begin{equation}\label{eq:varmu}
			\nu_{k} = \inf_{\tiny
			\begin{array}{c}
				V\subset \ker(|H^\Omega|-\lambda_1),\\
				\dim V = k,
			\end{array}
			}
			\sup_{\tiny
			\begin{array}{c}
				u\in V,\\
				\norm{u}_{L^2(\Omega)}=1,
			\end{array}
			}
				\eta(u),
		\end{equation}
		with
		\[
			\eta(u) =   
			 	\int_\Gamma	\left(
					\frac{|\nabla_s u|^2}{2}
					-
					\frac{|(\pa_n+\kappa/2+m_0)u|^2}{2}
					+
					\left(\frac{K}{2}-\frac{\kappa^2}{8}-\frac{\lambda_1^2}{2}\right)|u|^2
							\right)\dx\Gamma\,.
		\]
	\end{enumerate}
	Here, $(\lambda_k)_{k\in \N^*}$ and $(\lambda_{k,m})_{k\in \N^*}$ are defined in Notation \ref{not.lambda},
	 $\kappa$ and $K$ are defined in Notation \ref{not.def1}.
	\Bk
\end{theorem}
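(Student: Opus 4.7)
The plan is to turn the heuristics of Section~1.3 into rigorous min-max estimates, the essential ingredient being a sharp asymptotic analysis of the exterior minimization problem \eqref{pb:extopt}. First I would prove Proposition~\ref{prop:ext} (providing \eqref{eq.Lambdam1}) and, for part~(ii), a sharpening up to order $1/m$. In a tubular neighborhood $\{\x + t\n(\x) : \x \in \Gamma,\ t \in (0,\delta)\}$ of $\Gamma$ contained in $\Omega'$, one rewrites $\mathcal{Q}_m$ in Fermi coordinates using the expansion $1 - t\kappa + t^2 K + O(t^3)$ of the Jacobian together with the induced metric on parallel surfaces. The near-minimizer of $\Lambda_m(v)$ is the quasi-exponential profile $u(\x,t)\approx v(\x)\,e^{-mt}$, and a careful integration separating tangential derivatives from the normal direction yields
\[
\Lambda_m(v) = m\|v\|_{L^2(\Gamma)}^2 + \int_\Gamma \tfrac{\kappa}{2}|v|^2\dx\Gamma + \frac{1}{m}\Phi(v) + o(1/m),
\]
where $\Phi$ is an explicit quadratic form in $v$ built from $|\nabla_s v|^2$ and geometric invariants of $\Gamma$. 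The far-field contribution from outside the tube is $O(e^{-cm})$ by a standard energy decay argument.

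For the upper bound in~(i), I use the min-max characterization of $\lambda_{k,m}^2$ associated with \eqref{eq:quadIM}. Given eigenfunctions $\varphi_1, \dots, \varphi_k$ of $|H^\Omega|$ with eigenvalues at most $\lambda_k$, I glue $\psi_i = \mathds{1}_\Omega \varphi_i + \mathds{1}_{\Omega'} u_{\varphi_i}$ where $u_{\varphi_i}$ minimizes $\Lambda_{m+m_0}(\varphi_i)$. Because $\mathcal{B}\varphi_i = \varphi_i$ forces $\Xi^- \varphi_i = 0$ and $\Xi^+\varphi_i = \varphi_i$ on $\Gamma$, combining \eqref{eq:quadIM} with \eqref{eq.Lambdam1} gives $\|H_m \psi_i\|^2 = \mathcal{Q}^{\rm int}(\varphi_i) + o(1) = \lambda_i^2 + o(1)$, whence $\lambda_{k,m}^2 \leq \lambda_k^2 + o(1)$. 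The refined upper bound in~(ii) uses the same trial-function construction restricted to $\ker(|H^\Omega| - \lambda_1)$, now feeding the sharp expansion of $\Lambda_m$ and identifying the $1/m$ correction with $\eta$ after algebraic manipulation.

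The lower bound is the harder direction. For~(i), I take, for each $m$, an orthonormal family $\psi_{1,m}, \dots, \psi_{k,m}$ with $\|H_m \psi_{j,m}\|^2 \leq \lambda_{j,m}^2 + o(1)$. The inequality \eqref{eq.lbHm} forces $\|\Xi^- \psi_{j,m}\|_{L^2(\Gamma)}^2 = O(1/m)$ and $\|\psi_{j,m}\|_{L^2(\Omega')}^2 = O(1/m^2)$, while $\|\nabla \psi_{j,m}\|_{L^2(\Omega)}$ stays bounded. Passing to the limit along subsequences, the restrictions to $\Omega$ converge weakly in $H^1(\Omega;\C^4)$ to functions in $\D(H^\Omega)$ (the MIT boundary condition being recovered at the limit from $\Xi^- \to 0$), and a comparison of Rayleigh quotients yields $\lambda_j^2 \leq \liminf_{m \to \infty} \lambda_{j,m}^2$. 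For the refined lower bound in~(ii), quasimodes at energy $\lambda_1^2 + O(1/m)$ are shown to be asymptotically localized on $\ker(|H^\Omega| - \lambda_1)$; an approximate spectral projection and dimension-counting argument reduces the min-max on $H^1(\R^3;\C^4)$ to the min-max on that finite-dimensional eigenspace, on which the effective quadratic form reads $\lambda_1^2 \|\cdot\|^2 + m^{-1}\eta + o(1/m)$.

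The main obstacle is the refined analysis of $\Lambda_m(v)$ needed for~(ii): obtaining the $1/m$ correction with its precise geometric form demands a second-order expansion of the Laplacian in Fermi coordinates, a rigorous control of the corrector beyond the leading exponential profile, and delicate bookkeeping to produce the combination $\tfrac{|\nabla_s u|^2}{2} - \tfrac{|(\pa_n + \kappa/2 + m_0)u|^2}{2} + (\tfrac{K}{2} - \tfrac{\kappa^2}{8} - \tfrac{\lambda_1^2}{2})|u|^2$ featured in $\eta$, in particular the appearance of $\lambda_1^2$ through the eigenvalue equation for $\varphi\in\ker(|H^\Omega|-\lambda_1)$. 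Once this expansion and the approximate projection onto the first eigenspace are established, the min-max argument closes without further difficulty.
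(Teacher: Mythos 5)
Your overall framework for part (i) — gluing MIT eigenfunctions with exterior minimizers for the upper bound, then extracting weak $H^1$ limits with $\Xi^-\psi_{j,m}\to 0$ for the lower bound — is consistent with the paper's strategy, and your analysis of $\Lambda_m(v)$ in Fermi coordinates (Proposition~\ref{prop:ext}) is on target. But your treatment of part (ii) has a genuine gap: you do not introduce (and seemingly do not realize the need for) the intermediate vectorial Robin Laplacian $L_m^{\rm int}$ of Section~\ref{sec.robintro}, and this omission is fatal to your refined bounds.

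Concretely, consider your refined upper bound, where you propose to feed the sharpened expansion of $\Lambda_m$ into the \emph{same} trial functions built from $\ker(|H^\Omega|-\lambda_1)$. If $\varphi$ is an exact MIT eigenfunction (so $\Xi^-\varphi=0$ on $\Gamma$), the glued test function yields
\[
\frac{\|H_m\psi\|^2_{L^2(\R^3)}}{\|\psi\|^2_{L^2(\R^3)}} = \lambda_1^2 + \frac{1}{m}\int_\Gamma\Big(\frac{|\nabla_s\varphi|^2}{2}+\big(\tfrac{K}{2}-\tfrac{\kappa^2}{8}-\tfrac{\lambda_1^2}{2}\big)|\varphi|^2\Big)\dx\Gamma + o(1/m)\,,
\]
which is strictly larger than $\lambda_1^2+\nu_k/m$ because it misses the negative contribution $-\tfrac12\|(\pa_\n+\kappa/2+m_0)\varphi\|^2_{L^2(\Gamma)}$. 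That term does \emph{not} come from the exterior expansion $\widetilde\Lambda_m$; it comes from an interior boundary layer. The optimal test function must \emph{relax} the constraint $\Xi^-\psi=0$ on $\Gamma$ to the Robin-type condition $\Xi^-(\pa_\n+\kappa/2+m_0+2m)\psi=0$, paying $O(1/m)$ in the penalty $2m\|\Xi^-\psi\|^2_{L^2(\Gamma)}$ but gaining $-\tfrac{1}{2m}\|(\pa_\n+\kappa/2+m_0)\psi\|^2_{L^2(\Gamma)}$ in the interior energy. This is precisely what the eigenfunctions of $L_m^{\rm int}$ do, and the algebraic identity in Lemma~\ref{lem:asympt_intlem1} (which hinges on comparing the exact boundary conditions of $L_m^{\rm int}$ and $(H^\Omega)^2$) is what produces the Robin term in $\mu_{\lambda,k}$ and hence in $\eta$. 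Your ``approximate spectral projection and dimension-counting'' sketch for the lower bound in (ii) has the same problem: without the eigenbasis of $L_m^{\rm int}$ and its two-term expansion (Theorem~\ref{thm:asym_eigen_int-0}), there is no mechanism in your argument that generates the Robin-derivative term at order $1/m$. You would need to either reinvent $L_m^{\rm int}$ or set up an equivalent two-scale variational analysis of the interior boundary layer; as written, your proposal would prove a result with the wrong constant $\nu_k$.

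A smaller omission: for the order-$1/m$ bookkeeping you need uniform $H^2(\Omega)$ bounds on the various eigenfunctions (the paper's Lemma~\ref{lem:regu}); your sketch implicitly assumes such control without flagging it.
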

\begin{remark}
	The max-min formula \eqref{eq:varmu} makes sense since $\ker(|H^\Omega|-\lambda {\rm Id})\subset H^2(\Omega;\C^4)$ for any eigenvalue $\lambda$ of $|H^\Omega|$.
\end{remark}
\begin{remark}
	$H_m$ and $H^\Omega$ anticommute with the charge conjugation $C$ defined for all  $\psi\in \C^4$, by
	\[
		C\psi = i\beta \alpha_2 \overline{\psi}, 
	\]
	where $\overline{\psi}\in \C^4$ is the vector obtained after complex conjugations of each of the components of $\psi$ (see for instance \cite[Section 1.4.6]{Thaller1992} and \cite[Section A.1]{MR3663620}). As a consequence, the spectrum of $H_m$ and $H^\Omega$ are symmetric with respect to $0$ and Theorem \ref{thm:main} may be rewritten as a result on the eigenvalues of  $H_m$ and $H^\Omega$.
\end{remark}
\subsubsection{A vectorial Laplacian with Robin-type boundary conditions}\label{sec.robintro}
Let us also mention an intermediate spectral problem whose study is needed in our proof of Theorem \ref{thm:main} and that may be of interest on its own.
Let us consider the vectorial Laplacian associated with the quadratic form
\begin{equation}\label{eq:quadf2}
	\begin{split}
		\mathcal{Q}_m^{\rm int}(u) = \norm{\nabla u}_{L^2(\Omega)}^2 +m_0^2\norm{u}_{L^2(\Omega)}^2+ \int_\Gamma \left(\frac{\kappa}{2}+m_0\right)|u|^2\dx \Gamma + 2m\norm{\Xi^- u}_{L^2(\Gamma)}^2
	\end{split}
\end{equation}
for $u\in \mathsf{Dom}(\mathcal{Q}_m^{\rm int}) = H^1(\Omega; \C^4)$ and $m>0$ where $\Xi^-, \Xi^+$ are defined by \eqref{eq:boundary_projections}. By a classical trace theorem, this form is bounded from below. More precisely, we have the following result.
\begin{lemma}\label{lem:sarobint}
The self-adjoint operator associated with $\mathcal{Q}_m^{\rm int}$ is defined by
\begin{equation}\label{eq:laplacian_int}
	\begin{split}
	&
	 \mathsf{Dom}(L_m^{\rm int}) = \left\{u\in H^2(\Omega; \C^4) :\quad  \begin{array}{l}
	 	\Xi^-\left(\pa_\n + \kappa/2+m_0 + 2m\right) u =0 \mbox{ on }\Gamma,\\
		\Xi^+\left(\pa_\n + \kappa/2+m_0\right) u = 0 \mbox{ on }\Gamma
			\end{array}\right\}
	\\&
	L_m^{\rm int}u = \left(-\Delta+m_0^2\right) u \mbox{ for all } u \in \mathsf{Dom}(L_m^{\rm int}).
	\end{split}
\end{equation}
It has compact resolvent and its spectrum is discrete. 
\end{lemma}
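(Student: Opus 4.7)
The plan is to apply the first representation theorem for closed, densely defined, symmetric, semibounded quadratic forms. First I would check that $\mathcal{Q}_m^{\rm int}$ is symmetric (obvious from \eqref{eq:quadf2}), closed and bounded from below on its dense domain $H^1(\Omega;\C^4)$. Both closedness and semi-boundedness follow from the standard trace inequality
\[
\|u\|_{L^2(\Gamma)}^2 \leq \eps \|\nabla u\|_{L^2(\Omega)}^2 + C_\eps \|u\|_{L^2(\Omega)}^2,
\]
valid since $\Omega$ has smooth boundary. Choosing $\eps$ small controls the two boundary terms by a fraction of $\|\nabla u\|_{L^2(\Omega)}^2$, so the form-norm is equivalent to the $H^1$-norm and the form is bounded below. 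Kato's first representation theorem then produces a unique self-adjoint operator $T$ with $\mathcal{Q}_m^{\rm int}(u,v) = \braket{Tu,v}_\Omega$ for all $u\in\mathsf{Dom}(T)$ and $v\in H^1(\Omega;\C^4)$, and $\mathsf{Dom}(T)$ dense in the form domain.

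Next I would identify $\mathsf{Dom}(T)$ as the set described in \eqref{eq:laplacian_int}. Given $u\in \mathsf{Dom}(T)$ and $f=Tu \in L^2(\Omega;\C^4)$, testing with $v\in C_c^\infty(\Omega;\C^4)$ yields $(-\Delta + m_0^2)u = f$ in $\mathcal{D}'(\Omega;\C^4)$. Applying componentwise interior and boundary elliptic regularity for the scalar Laplacian on a smooth bounded domain gives $u\in H^2(\Omega;\C^4)$, hence $L_m^{\rm int}u = (-\Delta + m_0^2)u$. For general $v\in H^1(\Omega;\C^4)$, integration by parts in \eqref{eq:quadf2}, combined with the fact that $\Xi^-$ is an orthogonal projection ($\Xi^{-\ast} = \Xi^- = (\Xi^-)^2$), produces the identity
\[
\mathcal{Q}_m^{\rm int}(u,v) = \braket{(-\Delta+m_0^2)u,v}_\Omega + \int_\Gamma \braket{\pa_\n u + (\kappa/2 + m_0)u + 2m\,\Xi^- u,\, v}\dx\Gamma .
\]
Equating with $\braket{f,v}_\Omega$ and using that traces of $H^1$ functions are dense in $H^{1/2}(\Gamma;\C^4)$, the boundary integrand vanishes. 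Decomposing it via $\Id = \Xi^+ + \Xi^-$, and using that the scalar factor $\kappa/2 + m_0$ commutes with $\Xi^\pm$, yields precisely the two boundary conditions in \eqref{eq:laplacian_int}. The reverse inclusion (any $H^2$ function satisfying both conditions belongs to $\mathsf{Dom}(T)$) is obtained by running the same integration by parts in reverse.

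Finally, compactness of the resolvent follows because $\mathsf{Dom}(\mathcal{Q}_m^{\rm int}) = H^1(\Omega;\C^4)$ is compactly embedded in $L^2(\Omega;\C^4)$ by the Rellich--Kondrachov theorem (bounded domain with smooth boundary). Hence the spectrum of $L_m^{\rm int}$ is discrete and accumulates only at $+\infty$.

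The main obstacle is the boundary identification: one must keep careful track of how the singular-looking term $2m\|\Xi^- u\|_{L^2(\Gamma)}^2$ interacts with the generic surface term $\int_\Gamma (\kappa/2+m_0)|u|^2$, and verify that its only effect is to shift the Robin coefficient by $2m$ on the $\Xi^-$-component of the trace while leaving the $\Xi^+$-component with the natural (unshifted) condition. The rest of the argument is the standard form-method for elliptic boundary value problems.
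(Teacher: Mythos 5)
Your proposal is correct and follows essentially the same route as the paper: the form method (first representation theorem), Green's formula to extract the boundary condition, componentwise elliptic regularity to get $H^2$ membership, and Rellich--Kondrachov for compactness of the resolvent. The paper's proof is just a terse version of exactly these steps; your write-up fills in the details that the paper leaves implicit (semiboundedness and closedness via the trace inequality, the density of $H^{1/2}(\Gamma)$ traces to kill the boundary integrand, the $\Xi^+\Xi^- = 0$ and $(\Xi^-)^2 = \Xi^-$ algebra that splits the single Robin condition $(\pa_\n + \kappa/2 + m_0 + 2m\,\Xi^-)u = 0$ into the two conditions of \eqref{eq:laplacian_int}).
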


\begin{notation}\label{not.lambdaint}
Let $(\lambda_{k,m}^{\rm int})_{k\in \N^*}$ denote the sequence of eigenvalues, each one being repeated according to its multiplicity and such that
	\begin{equation}\label{eq:eigen_laplacian_int}
		\lambda_{1,m}^{\rm int}\leq \lambda_{2,m}^{\rm int}\leq \dots
	\end{equation}
\end{notation}
The asymptotic behavior of the eigenvalues of $L_m^{\rm int}$ is detailed in the following theorem.
\begin{theorem}\label{thm:asym_eigen_int-0}
	The following holds:
	\begin{enumerate}[\rm (i)]
		\item For every $k\in \N^*$, $\lim_{m\to +\infty} \lambda_{k,m}^{\rm int} = \lambda_k^2$.
\item 	Let $\lambda$ be an eigenvalue of $|H^\Omega|$ of multiplicity $k_1\in \N^*$. Consider $k_0\in \N$ the unique integer such that for all $k\in\{1,\dots,k_1\}$, $\lambda_{k_0+k} = \lambda$.
 Then, for all $k\in\{1,2,\dots, k_1\}$, we have
		\[
			\lambda_{k_0+k,m}^{\rm int} = \lambda^2 +\frac{\mu_{\lambda,k}}{m} + o\left(\frac{1}{m}\right)
		\]
		where
	\begin{equation}\label{eq:mulk}
			\mu_{\lambda,k} := 
				\inf_{\tiny
			\begin{array}{c}
				V\subset \ker(|H^\Omega|-\lambda),\\
				\dim V = k,
			\end{array}
			}
			\sup_{\tiny
			\begin{array}{c}
				v\in V,\\
				\norm{v}_{L^2(\Omega)}=1,
			\end{array}
			}
			-\frac{
				\norm{(\pa_\n+\kappa/2+m_0)v}_{L^2(\Gamma)}^2
				}{2}\,.
		\end{equation}

	\end{enumerate}
	Here, $(\lambda_k)_{k\in \N^*}$ is defined in Notation \ref{not.lambda}, $(\lambda_{k,m}^{\rm int})_{k\in \N^*}$ in Notation \ref{not.lambdaint} and $\kappa$ in Notation \ref{not.def1}.
	\Bk
\end{theorem}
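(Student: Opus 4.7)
The plan is to treat \textbf{(i)} via min-max combined with a weak-compactness argument, and to prove \textbf{(ii)} via a tubular boundary-layer ansatz matched by a projection-based spectral analysis.

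For \textbf{(i)}, the upper bound $\lambda_{k,m}^{\mathrm{int}}\leq \lambda_k^2$ follows from using the first $k$ eigenfunctions of $|H^\Omega|$ as test functions in the min-max for $L_m^{\mathrm{int}}$: since they satisfy $\mathcal{B}\varphi=\varphi$, hence $\Xi^-\varphi=0$ on $\Gamma$, the $2m$-boundary term in $\mathcal{Q}_m^{\mathrm{int}}$ vanishes and $\mathcal{Q}_m^{\mathrm{int}}(\varphi)=\mathcal{Q}^{\mathrm{int}}(\varphi)=\|H^\Omega\varphi\|^2$ by Lemma \ref{lem:quadf1}. For the lower bound, consider normalized eigenfunctions $u_{k,m}$; the uniform upper bound on $\mathcal{Q}_m^{\mathrm{int}}(u_{k,m})$ forces $\|u_{k,m}\|_{H^1(\Omega)}$ bounded and $\|\Xi^-u_{k,m}\|^2_{L^2(\Gamma)}=O(1/m)$. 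Any weak $H^1$-limit (strong in $L^2(\Omega)$ and $L^2(\Gamma)$ by Rellich and compact trace) therefore lies in $\D(H^\Omega)$, orthonormality passes to the limit, and lower semicontinuity of $\mathcal{Q}^{\mathrm{int}}$ together with the min-max for $|H^\Omega|$ gives $\liminf_{m\to+\infty}\lambda_{k,m}^{\mathrm{int}}\geq \lambda_k^2$.

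For the upper bound in \textbf{(ii)}, pick an $L^2(\Omega)$-orthonormal basis $(v_1,\ldots,v_{k_1})$ of $\ker(|H^\Omega|-\lambda)$ that diagonalizes the form $v\mapsto -\tfrac{1}{2}\|(\pa_\n+\kappa/2+m_0)v\|^2_{L^2(\Gamma)}$, with eigenvalues $\mu_{\lambda,1}\leq\cdots\leq\mu_{\lambda,k_1}$. In tubular coordinates $(s,t)\in\Gamma\times[0,t_0)$ near $\Gamma$, with $\chi$ a smooth cutoff such that $\chi\equiv 1$ near $0$ and $\chi\equiv 0$ for $t\geq t_0/2$, define
\[
u_{v_j}(s,t) := v_j(s,t) - \frac{\chi(t)}{2m}\,\Xi^-(\pa_\n v_j)(s).
\]
Then $\Xi^- u_{v_j}|_\Gamma=-\tfrac{1}{2m}\Xi^-(\pa_\n v_j)$, so the Robin-type boundary condition $\Xi^-(\pa_\n+\kappa/2+m_0+2m)u_{v_j}=0$ is satisfied to leading order, while $\Xi^+(\pa_\n+\kappa/2+m_0)u_{v_j}=0$ holds exactly (using $\Xi^+\Xi^-=0$). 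A bilinear expansion of $\mathcal{Q}_m^{\mathrm{int}}$ via integration by parts, using $(-\Delta+m_0^2)v_j=\lambda^2 v_j$, $\Xi^-v_j=0$, and $\Xi^+(\pa_\n+\kappa/2+m_0)v_j=0$, produces a key cancellation between the first-order variation $2\lambda^2\Re\langle v_j,\tilde v_j\rangle$ in the numerator and the normalization in the denominator, yielding $\mathcal{Q}_m^{\mathrm{int}}(u_{v_j})/\|u_{v_j}\|^2=\lambda^2+\mu_{\lambda,j}/m+o(1/m)$. Prepending the MIT-bag eigenfunctions for $\lambda_1^2,\ldots,\lambda_{k_0}^2$ gives an approximately orthonormal test space of dimension $k_0+k$, and min-max provides the upper bound.

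For the lower bound in \textbf{(ii)}, decompose $u\in H^1(\Omega;\C^4)$ as $u=u_\lambda+u_\perp$ with $u_\lambda=\Pi_{\ker(|H^\Omega|-\lambda)}u$. Integration by parts (using $-\Delta u_\lambda=(\lambda^2-m_0^2)u_\lambda$ and $L^2(\Omega)$-orthogonality) yields
\[
\mathcal{Q}_m^{\mathrm{int}}(u)=\lambda^2\|u_\lambda\|^2+\mathcal{Q}_m^{\mathrm{int}}(u_\perp)+2\Re\!\int_\Gamma\!\langle(\pa_\n+\kappa/2+m_0)u_\lambda,u_\perp\rangle\,\dx\Gamma.
\]
Since $\Xi^+(\pa_\n+\kappa/2+m_0)u_\lambda=0$, the cross-integral pairs only with $\Xi^-u_\perp$, and a weighted Cauchy--Schwarz absorbs it into the $2m\|\Xi^-u_\perp\|^2_{L^2(\Gamma)}$ piece of $\mathcal{Q}_m^{\mathrm{int}}(u_\perp)$ at the cost of $\tfrac{1}{2m}\|(\pa_\n+\kappa/2+m_0)u_\lambda\|^2$. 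Inserting this into the max-min formula with test subspaces orthogonal to the first $k_0$ MIT-bag eigenfunctions and to $k-1$ optimizing vectors in $\ker(|H^\Omega|-\lambda)$, together with the uniform gap around $\lambda^2$ coming from (i), yields the matching lower bound. The main obstacle lies precisely in this last step: since $u_\perp$ need not satisfy any boundary condition, no naive spectral-gap estimate is available, and one has to exploit (i) plus an auxiliary analysis on $(\ker(|H^\Omega|-\lambda))^\perp$ to confirm that $\mathcal{Q}_m^{\mathrm{int}}(u_\perp)-\lambda^2\|u_\perp\|^2$ is controlled to $o(1/m)$-precision, so that the $1/m$-correction is entirely driven by $u_\lambda$.
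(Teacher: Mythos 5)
Your part (i) matches the paper's argument essentially step for step: min-max upper bound from $\D(H^\Omega)\subset\D(\mathcal Q_m^{\rm int})$, then weak $H^1$-compactness of the normalized eigenfunctions, Rellich/trace compactness, and lower semicontinuity of the limit form, with the observation $m\|\Xi^-u_{k,m}\|^2_{L^2(\Gamma)}=O(1)$ (in fact $\to 0$ along the argument).

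Part (ii), however, departs from the paper in a way that leaves a genuine gap, and you flag it yourself. The paper's proof does not construct a tubular test function nor split $u=u_\lambda+u_\perp$; instead it proves an \emph{exact} integration-by-parts identity (Lemma \ref{lem:asympt_intlem1}): if $u$ is an eigenfunction of $|H^\Omega|$ with eigenvalue $\lambda$ and $u_{k,m}$ an eigenfunction of $L_m^{\rm int}$ with eigenvalue $\lambda_{k,m}^{\rm int}$, then
\[
m(\lambda_{k,m}^{\rm int}-\lambda^2)\braket{u_{k,m},u}_{\Omega} = -\tfrac12\braket{(\pa_\n + \kappa/2+m_0)u_{k,m},(\pa_\n + \kappa/2+m_0) u}_\Gamma\,,
\]
valid for every $m$, no remainder. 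Combined with the strong convergence of the $u_{k,m}$ established in part (i) (upgraded to $H^2$ by the regularity Lemma \ref{lem:regu} so that $\pa_\n u_{k,m}$ converges on $\Gamma$), both the asymptotics of the eigenvalues and the limit eigenvectors (item (iii) of Proposition \ref{lem:asym_eigen_int}) fall out by passing to the limit in this identity. No spectral-gap estimate on a complement is needed.

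Your lower-bound strategy --- decompose $u=u_\lambda+u_\perp$, kill the cross term by weighted Cauchy--Schwarz against $2m\|\Xi^-u_\perp\|^2_{L^2(\Gamma)}$, and then assert that $\mathcal Q_m^{\rm int}(u_\perp)-\lambda^2\|u_\perp\|^2$ is negligible --- is precisely where the proposal is incomplete. After absorbing the cross term you are left with a form $\mathcal Q^{\rm int}(u_\perp)$ (or a reduced version of $\mathcal Q_m^{\rm int}$ with diminished $m$-boundary weight) acting on $u_\perp$, which merely lies in the $L^2(\Omega)$-orthogonal complement of $\ker(|H^\Omega|-\lambda)$ but satisfies no boundary condition. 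That complement is not a subspace on which $\mathcal Q^{\rm int}$ sits above $\lambda^2$: the form $\mathcal Q^{\rm int}$ on all of $H^1(\Omega;\C^4)$ corresponds to a Neumann--Robin problem whose low spectrum has nothing to do with the MIT eigenvalues, so the lower bound $\mathcal Q^{\rm int}(u_\perp)\geq\lambda^2\|u_\perp\|^2$ does not follow from (i), nor is there an obvious gap inequality. To make your route work one would have to prove a separate, quantitative bound showing that any Rayleigh quotient competitor orthogonal to the first $k_0+k-1$ eigenfunctions of $L_m^{\rm int}$ has a component transverse to $\ker(|H^\Omega|-\lambda)$ that is $O(m^{-1/2})$ in $L^2$, and that this component contributes $O(m^{-2})$ to the form. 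That is a nontrivial additional argument which you have not supplied. The paper's identity sidesteps the issue entirely, which is precisely why it is used.

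One further remark on your upper bound: the boundary-layer ansatz $u_{v_j}(s,t)=v_j-\tfrac{\chi(t)}{2m}\Xi^-(\pa_\n v_j)$ is a sensible quasimode, and the claimed cancellation of the $O(1)$ correction in the Rayleigh quotient is plausible. But it would require a careful expansion of $\mathcal Q_m^{\rm int}$ in the tubular coordinates (the metric factor $\tilde a = 1+t\kappa+t^2K$, the Robin boundary weight $\kappa/2+m_0$, and the fact that $\Xi^\pm$ commute with $\pa_\n$ but not with $\nabla_s$ near $\Gamma$). The paper's integration-by-parts approach gives the same conclusion with less bookkeeping, so if you pursue your route you should double-check that the $O(1/m)$ coefficient produced by the explicit ansatz really is $\mu_{\lambda,j}$ and not $\mu_{\lambda,j}$ plus a spurious boundary curvature contribution.
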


\subsection{Organization of the paper}
In Section \ref{sec.2}, we discuss the asymptotic properties of the minimizers associated with the exterior optimization problem \eqref{pb:extopt}.
In Section \ref{sec.3}, we investigate the interior problem. In Section \ref{sec.4}, we prove Theorem \ref{thm:main}.

\section{About the exterior optimization problem}\label{sec.2}
The aim of this section is to study the minimizers of \eqref{pb:extopt} and their properties when $m$ tends to $+\infty$. These properties are gathered in the following proposition.

 \begin{proposition}\label{prop:ext}
 For all $v\in H^1(\Omega)$, there exists a unique minimizer $u_m(v)$ associated with $\Lambda_m(v)$, and it satisfies, for all $u\in V_v$,
		\[\mathcal{Q}_m(u) = \Lambda_m(v) + \mathcal{Q}_m(u-u_m(v))\,.\]
There exists a constant $C>0$ such that, for all $m\geq 1$,
	\begin{enumerate}[\rm (i)]
		\item\label{item.i}	 for all $v\in H^1(\Omega)$, we have 
	
		\[o(1)\geq\Lambda_{m}(v)-\left(m\|v\|^2_{L^2(\Gamma)} + \int_\Gamma \frac{\kappa}{2}|v|^2\dx \Gamma\right)
		\geq-\frac{C}{m}\|v\|^2_{L^2(\Gamma)}\,,\]
		and
		\[
			Cm\norm{v}_{H^1(\Omega)}^2\geq\Lambda_{m}(v)\,,
		\]
		\item\label{item.ii} for all $v\in H^2(\Omega)$,
		\[
	 		\left|\Lambda_m(v)-\widetilde\Lambda_m(v)\right|\leq \frac{C}{m^{3/2}}\|v\|^2_{H^{3/2}(\Gamma)},
	 		\]
		\item\label{item.iii}	 for all $v\in H^2(\Omega)$,
		\[
	 		\left|\|u_m(v)\|^2_{L^2(\Omega')} - \frac{\|v\|^2_{L^2(\Gamma)}}{2m}\right|\leq \frac{C}{m^2}\|v\|^2_{H^{3/2}(\Gamma)}\,.
	 	\]
	\end{enumerate}
	Here
	\[
		\widetilde \Lambda_{m}(v) = m\int_\Gamma  
			|v|^2
		\dx\Gamma
		+
		\int_\Gamma  
			\frac{\kappa}{2}|v|^2
		\dx\Gamma
		+
		m^{-1}\int_\Gamma \Big\{
			\frac{|\nabla_s v|^2}{2}
			+
			\Big(\frac{K}{2}-\frac{\kappa^2}{8}\Big)|v|^2
		\Big\}\dx\Gamma.
	\]

 \end{proposition}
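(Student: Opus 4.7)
The plan is a standard variational analysis combined with an explicit expansion of the energy in tubular coordinates near $\Gamma$. For existence, uniqueness, and the orthogonal identity: since $v \in H^1(\Omega)$ has trace $v|_\Gamma \in H^{1/2}(\Gamma)$, a classical extension theorem makes $V_v = u_0 + H^1_0(\Omega';\C^4)$ a nonempty closed affine subspace. The form $\mathcal{Q}_m$ is continuous, strictly convex, and coercive on $H^1(\Omega';\C^4)$ (for $m\geq 1$, $\mathcal{Q}_m(\cdot)^{1/2}$ is equivalent to the $H^1$-norm), so there is a unique minimizer $u_m(v)$, which is the unique weak solution of $(-\Delta+m^2)u=0$ in $\Omega'$ with trace $v$. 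Testing the Euler--Lagrange equation against any $u-u_m(v)\in H^1_0(\Omega';\C^4)$ cancels the cross term in the expansion of $\mathcal{Q}_m(u_m(v)+(u-u_m(v)))$, giving the orthogonal decomposition.

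To establish the upper bounds in (i) and (ii), I would introduce tubular coordinates $\Phi(s,t)=s+t\n(s)\colon \Gamma\times[0,\delta)\to U\subset\Omega'$, with volume element $J(s,t)\,d\Gamma\,dt$ where $J(s,t)=1+t\kappa(s)+t^2K(s)$. With a cutoff $\chi\in C_c^\infty([0,\delta))$ equal to $1$ near $0$, take the trial function $u_*(\Phi(s,t))=\chi(t)e^{-mt}\bigl(v(s)+t\,c_m(s)\bigr)$, extended by zero outside $U$. Substituting into $\mathcal{Q}_m$, Taylor-expanding $J$ and the induced metric $g(t)$ in $t$, and evaluating $\int_0^\infty t^k e^{-2mt}dt=k!/(2m)^{k+1}$ yields the claimed asymptotics up to exponentially small truncation errors: $c_m\equiv 0$ proves~(i), while an optimal $c_m(s)$ (chosen to cancel the $O(1/m)$ discrepancy produced by the curvature terms) proves~(ii). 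The coarse bound $\Lambda_m(v)\leq Cm\|v\|_{H^1(\Omega)}^2$ is obtained from the cutoff ansatz $u(x)=\chi(m\,d(x,\Gamma))(Ev)(x)$, where $Ev\in H^1(\R^3)$ extends $v$, using the layer estimate $\|Ev\|_{L^2(\{d<1/m\})}^2=O(m^{-1}\|v\|_{H^1(\Omega)}^2)$.

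For the matching lower bound, $u_m(v)$ decays super-polynomially outside a boundary layer of width $O(1/m)$, so up to $O(e^{-cm})$ one may work inside $U$. The gauge change $u=J^{-1/2}w$ absorbs the first-order radial term in $-\Delta$ and, after an integration by parts in $t$ that produces the boundary contribution $\int_\Gamma(\kappa/2)|v|^2\,d\Gamma$, recasts the energy as
\[
\mathcal{Q}_m(u)=\int_\Gamma\tfrac{\kappa}{2}|v|^2\,d\Gamma+\int_\Gamma\!\!\int_0^\infty\!\Bigl(|\partial_t w|^2+m^2|w|^2+V(s,t)|w|^2+|\nabla_s w|^2_{g(t)}\Bigr)dt\,ds+O(e^{-cm}),
\]
where $V(s,0)=K(s)-\kappa(s)^2/4$. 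The explicit one-dimensional minimum $\inf\{\int_0^\infty(|w'|^2+m^2|w|^2)\,dt : w(0)=v(s)\}=m|v(s)|^2$, attained at $w=v(s)e^{-mt}$, integrated over $\Gamma$ gives the lower bound in~(i). For~(ii), the matching lower bound follows from the orthogonal identity $\Lambda_m(v)=\mathcal{Q}_m(u_*)-\mathcal{Q}_m(u_*-u_m(v))$ together with a remainder estimate obtained by testing the residual $(-\Delta+m^2)u_*$, which is supported in the boundary layer, against $u_*-u_m(v)$ and using the coercivity of $\mathcal{Q}_m$. Item~(iii) follows from~(ii) and the orthogonal identity applied to the reference extension $(s,t)\mapsto\chi(t)v(s)e^{-mt}$: this profile has $L^2$-mass $\|v\|_{L^2(\Gamma)}^2/(2m)+O(m^{-2})$, and the Poincar\'e-type bound $m^2\|\cdot\|_{L^2(\Omega')}^2\leq\mathcal{Q}_m(\cdot)$ upgrades the $O(m^{-1})$ energy remainder to the announced $O(m^{-2})$ mass error.

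The main obstacle will be implementing the gauge change $u=J^{-1/2}w$ and the accompanying Taylor expansion of the metric and Jacobian precisely enough to recover the curvature invariant $K/2-\kappa^2/8$ exactly in the $1/m$ correction, matching the lower-bound value against the upper-bound value produced by a judicious choice of the profile correction $c_m(s)$, all while keeping the tangential-derivative remainders controlled by $\|v\|_{H^{3/2}(\Gamma)}$ at the sharp $O(m^{-3/2})$ rate.
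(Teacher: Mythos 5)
Your route differs technically from the paper's: you gauge-transform $u=J^{-1/2}w$ to eliminate the first-order normal derivative and read off the curvature correction from the effective potential $V(s,0)=K-\kappa^2/4$, whereas the paper (Sections~2.3--2.4) rescales $\tau=mt$, freezes the curvature pair $(\kappa(s),K(s))$ as parameters in a one-dimensional family of optimization problems, expands the one-dimensional minimizer $u_{m,\kappa,K}$ explicitly as $u_0+m^{-1}u_1+m^{-2}u_2$ with $u_0=e^{-\tau}$ (Proposition~\ref{lem:ext}), and separately controls its tangential derivatives (Lemma~\ref{lem.deriv-u}). Both approaches isolate the same one-dimensional effective quantities, and your treatment of items~\eqref{item.ii} and~\eqref{item.iii}, of the coarse bound $Cm\|v\|_{H^1(\Omega)}^2$, and of the lower bound in~\eqref{item.i} is consistent with the paper; in particular your use of the orthogonal identity together with the coercivity $m^2\|\cdot\|_{L^2(\Omega')}^2\le\mathcal{Q}_m(\cdot)$ to upgrade the energy remainder to the $O(m^{-2})$ mass error in~\eqref{item.iii} mirrors the argument at the end of Section~2.

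There is, however, a genuine gap in your claimed proof of the $o(1)$ upper bound in item~\eqref{item.i}. You assert that $c_m\equiv 0$ handles~\eqref{item.i}, i.e.\ you would use the pure tensor profile $u_*(\Phi(s,t))=\chi(t)e^{-mt}\,v(s)$, whose tangential energy contribution is $\sim\frac{1}{2m}\|\nabla_s(v|_\Gamma)\|^2_{L^2(\Gamma)}$. But item~\eqref{item.i} is stated for all $v\in H^1(\Omega)$, whose trace lies only in $H^{1/2}(\Gamma)$; there is no reason for $\nabla_s(v|_\Gamma)$ to be in $L^2(\Gamma)$, so $\mathcal{Q}_m(u_*)$ may well be infinite and the ansatz gives no upper bound at all for the generic $v$ the statement is about. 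The paper circumvents this (Corollary~\ref{cor:estilowreg}) by taking the trial function $(Ev)\cdot\widetilde{u}_m$, where $Ev\in H^1(\R^3)$ is a Sobolev extension of $v$ and $\widetilde{u}_m$ is the pure normal profile built from $u_{m,\kappa(s),K(s)}(mt)$; the tangential gradient appearing in the energy is then $\nabla(Ev)\in L^2(\R^3)$ rather than a surface gradient of the trace, and the residual terms $\|\widetilde{u}_m\,\nabla(Ev)\|^2_{L^2(\Omega')}$ and $\langle\widetilde{u}_m Ev,\, Ev(-\Delta+m^2)\widetilde{u}_m\rangle_{\Omega'}$ are sent to zero by dominated convergence. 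This is precisely why the $o(1)$ in item~\eqref{item.i} depends on $v$ itself rather than only on $\|v\|_{H^1(\Omega)}$, a non-uniformity your pure tensor ansatz cannot produce. You would need to replace $v(s)$ by a genuine $H^1(\R^3)$ extension in the boundary-layer trial function, or restrict the $o(1)$ claim to $v$ with $H^1(\Gamma)$ trace, to close this step.
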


\subsection{Existence, uniqueness and Euler-Lagrange equations}

Let us discuss here the existence of the minimizers announced in Proposition \ref{prop:ext} and their elementary properties. We will see later that, in the limit $m\to+\infty$, this minimization problem on $\Omega'$ is closely related to the same problem on a tubular neighborhood in $\Omega'$ of $\Gamma$. For $\delta>0$, $m>0$ and $v\in H^{1}(\Omega)$, we define
	 \begin{equation}\label{pb:extopt1}
	\Lambda_{m,\delta}(v) = \inf \left\{
		 \mathcal{Q}_{m}(u)\;, u\in V_{v,\delta}
	\right\}\,,\qquad  \mathcal{Q}_{m}(u) = \|\nabla u\|^2_{L^2(\mathcal V_{\delta})} + m^2\|u\|^2_{L^2(\mathcal V_{\delta})}\,,
 \end{equation}
 	%
 	where\footnote{Note that, since $\Omega$ is a smooth set, there exists $\delta_0>0$ such that, for all $\delta\in(0,\delta_0)$, the set $\mathcal{V}_\delta$ has the same regularity as $\Omega$.} 
	$\mathcal{Q}_{m}$ is defined in \eqref{pb:extopt},
	 $\mathcal V_{\delta}  = \{\x\in\Omega'~:~{\rm dist}(\x,\Gamma)<\delta\}$
	and
	\[
		V_{v,\delta} =  \left\{
			u\in H^1(\mathcal V_{\delta},\mathbb{C}^4) \mbox{ s.t. }u = v\mbox{ on }\Gamma\mbox{ and }u(\x) =0\mbox{ if }\rm{dist}(\x,\Gamma) = \delta
				\right\}.
	\]
	
	\subsubsection{Existence and uniqueness}

\begin{lemma}\label{lem.existence}
The minimizers associated with \eqref{pb:extopt} and \eqref{pb:extopt1} exist and are unique.
\end{lemma}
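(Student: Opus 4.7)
The plan is to cast each of \eqref{pb:extopt} and \eqref{pb:extopt1} as the minimization of a coercive Hermitian quadratic form over a closed affine subspace of a Hilbert space, and invoke the Riesz projection theorem (equivalently, Lax--Milgram).

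First I fix any element $w\in V_v$ (respectively $w\in V_{v,\delta}$); the classical extension theorem cited just after \eqref{pb:extopt} guarantees that such a $w$ exists. Then $V_v = w + W$, where $W = H^1_0(\Omega';\C^4)$ is a closed subspace of $H^1(\Omega';\C^4)$; similarly $V_{v,\delta} = w + W_\delta$, with $W_\delta$ the closed subspace of $H^1(\mathcal V_\delta;\C^4)$ consisting of functions whose traces vanish on both components $\Gamma$ and $\{\mathrm{dist}(\cdot,\Gamma)=\delta\}$ of $\pa\mathcal V_\delta$. In both cases $V_v$ (resp.\ $V_{v,\delta}$) is a closed affine subspace of the ambient Hilbert space.

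For $m>0$, the functional $\mathcal Q_m$ is equivalent to the squared $H^1$-norm, since
\[
\min(1,m^2)\,\|u\|_{H^1}^2 \leq \mathcal Q_m(u) \leq \max(1,m^2)\,\|u\|_{H^1}^2.
\]
Hence the associated sesquilinear form $a_m(u_1,u_2) = \langle\nabla u_1,\nabla u_2\rangle + m^2 \langle u_1,u_2\rangle$ is continuous, Hermitian and coercive on the relevant $H^1$ space. Writing $u = w + \phi$ with $\phi\in W$, the minimization reduces to minimizing
\[
\phi \longmapsto a_m(\phi,\phi) + 2\RE\, a_m(w,\phi)
\]
over $\phi\in W$. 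The Lax--Milgram theorem, applied to the restriction of $a_m$ to the closed subspace $W$, produces a unique $\phi^\star\in W$ satisfying $a_m(\phi^\star,\psi) = -a_m(w,\psi)$ for every $\psi\in W$, and I take $u_m(v) := w + \phi^\star$ as the minimizer. The analogous argument on $\mathcal V_\delta$ with $W_\delta$ settles \eqref{pb:extopt1}.

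Uniqueness follows from strict convexity of $\mathcal Q_m$: if $u_1,u_2\in V_v$ both attained the infimum, then $\tfrac{u_1+u_2}{2}\in V_v$ and the parallelogram identity
\[
\mathcal Q_m\!\left(\tfrac{u_1+u_2}{2}\right) = \tfrac12 \mathcal Q_m(u_1) + \tfrac12 \mathcal Q_m(u_2) - \tfrac14 \mathcal Q_m(u_1-u_2)
\]
would force $\mathcal Q_m(u_1-u_2)=0$, hence $u_1=u_2$. I do not expect any genuine obstacle here; the only mild point to check is that the construction is independent of the choice of reference extension $w$, which is immediate since any two extensions differ by an element of $W$ and the Euler--Lagrange equation absorbs this difference.
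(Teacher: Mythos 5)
Your proof is correct, and it takes a genuinely different route from the paper. The paper uses the direct method of the calculus of variations: it takes a minimizing sequence, observes that it is bounded in $H^1$, extracts a weakly convergent subsequence, uses Rellich--Kondrachov and interpolation to pass to strong $H^s_{\rm loc}$ convergence for $s<1$, invokes the trace theorem to show the weak limit still lies in $V_v$ (resp.\ $V_{v,\delta}$), and concludes by weak lower semicontinuity of $\mathcal Q_m$; uniqueness is then a one-line appeal to strict convexity. You instead exploit the affine structure $V_v = w + H^1_0(\Omega';\C^4)$ and the fact that $\mathcal Q_m$ is (comparable to) the squared $H^1$-norm, so that Lax--Milgram on the closed subspace $W$ settles existence and uniqueness simultaneously. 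The paper's approach is more robust (it would survive if $\mathcal Q_m$ were merely convex and lower semicontinuous rather than a Hilbert norm), while yours is shorter, avoids the compactness and trace machinery entirely, and has the side benefit of producing the Euler--Lagrange characterization $a_m(\phi^\star,\psi) = -a_m(w,\psi)$ directly, which the paper only derives afterward in Lemma~\ref{lem:ELE}. One small point worth making explicit in your argument is the identification $W = H^1_0(\Omega';\C^4) = \{u\in H^1(\Omega';\C^4): u|_\Gamma = 0\}$: this is standard for the exterior domain $\Omega'$ with smooth compact boundary $\Gamma$, but since $\Omega'$ is unbounded it is not completely automatic and deserves a word.
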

\begin{proof}
Let $(u_n)$ and $(u_{\delta,n})$ be minimizing sequences for $\Lambda_{m}(v)$ and $\Lambda_{m,\delta}(v)$ respectively. These two sequences are uniformly bounded in $H^1$ so that up to subsequences, they converge weakly to $u\in H^1(\Omega')$ and $v_\delta\in H^1(\mathcal{V}_\delta)$. By Rellich - Kondrachov compactness Theorem and the interpolation inequality,
 the sequences converges strongly in $H^{s}_{\rm loc}$ for any $s\in[0,1)$. The trace theorem ensures then that the convergence also holds in $L^2_{\rm loc}(\Gamma)$ and $L^2_{\rm loc}(\partial \mathcal{V}_\delta)$ so that $u\in V_v$ and $u_\delta\in V_{v,\delta}$. Since
	\[
		\Lambda_m(v) = \lim_{n\to+\infty} \mathcal{Q}_m(u_n)\geq \mathcal{Q}_m(u)\geq \Lambda_m(v)
	\]                                           
	and
	\[
		\Lambda_{m,\delta}(v) = \lim_{n\to+\infty} \mathcal{Q}_{m}(u_{\delta,n})\geq \mathcal{Q}_{m}(u_{\delta,n})\geq \Lambda_{m,\delta}(v),
	\] 	
	$u$ and $u_\delta$ are minimizers.

	Since $V$ and $V_\delta$ are convex sets and the quadratic form $\mathcal{Q}_m$ is a strictly convex function, the uniqueness follows.

\end{proof}	
	
	\begin{notation}
	The unique minimizers associated with $\Lambda_{m}(v)$ and $\Lambda_{m,\delta}(v)$ are denoted by $u_m(v)$ and $u_{m,\delta}(v)$, respectively, or $u_m$ and $u_{m,\delta}$ when the dependence on $v$ is clear.
	\end{notation}

	\subsubsection{Euler-Lagrange equations}

The following lemma gathers some properties related to the Euler-Lagrange equations.

\begin{lemma}\label{lem:ELE}
	For all $\delta>0$, $m>0$ and $v\in H^1(\Omega)$, the following holds.
	\begin{enumerate}[\rm (i)]
		\item\label{pt:cp1} $(-\Delta+m^2)u_m = 0$ and $(-\Delta+m^2)u_{m,\delta}=0$,
		\item\label{pt:cp2} $\Lambda_m(v) = -\braket{\pa_\n u_m,u_m}_\Gamma$ and $\Lambda_{m,\delta}(v) = -\braket{\pa_\n u_{m,\delta},u_{m,\delta}}_{\Gamma}$,
		\item\label{pt:cp4} $\mathcal{Q}_m(u) =  \Lambda_m(v) + \mathcal{Q}_m(u-u_m)$ for all $u\in V_v$,
		\\
		$\mathcal{Q}_{m}(u) =  \Lambda_{m,\delta}(v) + \mathcal{Q}_{m}(u-u_{m,\delta})$ for all $u\in V_{v,\delta}$,
	\end{enumerate}
	where $\Lambda_m(v)$ and $V_v$ are defined in \eqref{pb:extopt}.
\end{lemma}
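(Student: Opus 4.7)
\begin{proof-sketch}
All three items follow from standard variational arguments, to be carried out in the order (\ref{pt:cp1}), (\ref{pt:cp4}), (\ref{pt:cp2}). For (\ref{pt:cp1}), I would observe that $V_v$ is an affine space modelled on $H^1_0(\Omega';\C^4)$ (and $V_{v,\delta}$ on the subspace of $H^1(\mathcal V_\delta;\C^4)$ vanishing on both faces of $\partial\mathcal V_\delta$), so that for any admissible perturbation $\varphi$ one has $u_m+t\varphi\in V_v$ for all $t\in\R$. Differentiating $t\mapsto\mathcal{Q}_m(u_m+t\varphi)$ at $t=0$ produces the weak equation $(-\Delta+m^2)u_m=0$, which standard interior elliptic regularity upgrades to a pointwise identity; the tubular case is identical.

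For (\ref{pt:cp4}), given $u\in V_v$ I would set $w:=u-u_m\in H^1_0(\Omega';\C^4)$ and expand
\[
\mathcal{Q}_m(u)=\mathcal{Q}_m(u_m)+2\RE\int_{\Omega'}\bigl(\nabla u_m\cdot\overline{\nabla w}+m^2u_m\cdot\overline w\bigr)\dx\x+\mathcal{Q}_m(w).
\]
The cross term vanishes after integration by parts, using (\ref{pt:cp1}) in the bulk and $w|_\Gamma=0$ on the boundary; this delivers the desired Pythagoras identity, and the same argument handles $\mathcal V_\delta$ (the outer face contribution being killed by the Dirichlet condition there).

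For (\ref{pt:cp2}), Green's formula applied directly to $\mathcal{Q}_m(u_m)$, together with the fact that the outward normal to $\Omega'$ along $\Gamma$ equals $-\n$, yields
\[
\mathcal{Q}_m(u_m)=\int_{\Omega'}\overline{u_m}\bigl(-\Delta u_m+m^2u_m\bigr)\dx\x-\braket{\pa_\n u_m,u_m}_\Gamma,
\]
and the bulk term vanishes by (\ref{pt:cp1}); the computation on $\mathcal V_\delta$ is analogous. The only mildly delicate point is legitimizing these integrations by parts on the unbounded exterior $\Omega'$, which I would handle by a standard exhaustion by large balls together with the $L^2$ control coming from $\mathcal{Q}_m(u_m)<\infty$ (and exponential decay of $H^1$-solutions of $(-\Delta+m^2)u=0$ if needed); I do not expect any genuine obstacle here.
\end{proof-sketch}
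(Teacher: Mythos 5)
Your proposal is correct and follows essentially the same route as the paper: Euler--Lagrange variation for (i), expansion of the quadratic form plus integration by parts using (i) for the Pythagoras identity, and Green's formula for (ii). The only difference is presentational (you expand $\mathcal{Q}_m(u_m+w)$ where the paper expands $\mathcal{Q}_m(u-u_m)$, and you add a remark on justifying the integration by parts on the unbounded $\Omega'$), which does not constitute a different approach.
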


\begin{proof}
Let $v\in H^1_0(\Omega')$. The function 
	\[
		\R\ni t\mapsto\mathcal{Q}_m(u_m + tv)
	\]
	has a minimum at $t=0$. Hence, the Euler-Lagrange equation is $(-\Delta+m^2)u_m = 0$. The same proof holds for $u_{m,\delta}$ . The second point follows from integrations by parts.
And for the last point, let $u\in V_v$. We have, by an integration by parts,
	\[\begin{split}
		\mathcal{Q}_m(u-u_m) 
		&= \mathcal{Q}_m(u)+\mathcal{Q}_m(u_m) - 2\Re\braket{u,(-\Delta+m^2)u_m}_{\Omega'} + 2\braket{u_m,\pa_\n u_m}_\Gamma
		\\&
		= \mathcal{Q}_m(u) -\Lambda_m(v)
	\end{split}\]
	and the result follows. The same proof works for $\Lambda_{m,\delta}(v)$.

\end{proof}

\subsection{Agmon estimates}
This section is devoted to the decay properties of the minimizers in the regime $m\to+\infty$.

We will need the following localization formulas.

\begin{lemma}\label{lem.IMS}
 Let $\chi$ be any real bounded Lipschitz function on $\Omega'$.
 Then,
	\begin{equation}\label{eq:idfq1}
		\mathcal{Q}_m(u_m\chi) =  -\braket{\pa_\n u_m,\chi^2u_m}_{\Gamma} + \|(\nabla\chi) u_m \|^2_{L^2(\Omega')}.
	\end{equation}
The same holds for $u_{m,\delta}$.
\end{lemma}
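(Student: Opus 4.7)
The plan is to carry out a standard IMS-type localization by expanding the quadratic form via the Leibniz rule, and then using the Euler-Lagrange equation from Lemma \ref{lem:ELE}\eqref{pt:cp1} to absorb the cross terms into a single boundary integral.

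First, using the pointwise identity $\nabla(u_m\chi) = \chi\nabla u_m + u_m\nabla\chi$ together with the componentwise relation $2\RE\langle u_m,\pa_j u_m\rangle = \pa_j |u_m|^2$ (valid because the $\C^4$ pairing is Hermitian), I would expand
\[
|\nabla(u_m\chi)|^2 = \chi^2|\nabla u_m|^2 + \tfrac{1}{2}\nabla(\chi^2)\cdot\nabla|u_m|^2 + |\nabla\chi|^2\,|u_m|^2.
\]
Adding $m^2|u_m\chi|^2$ and integrating over $\Omega'$ then expresses $\mathcal Q_m(u_m\chi)$ as the sum of four integrals over $\Omega'$.

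Next, I would test $(-\Delta+m^2)u_m = 0$ against $\chi^2 u_m$ with Green's identity on $\Omega'$. Since the outward normal of $\Omega'$ along $\Gamma$ is $-\n$, the boundary contribution is $-\braket{\pa_\n u_m,\chi^2 u_m}_{\Gamma}$, and taking real parts gives
\[
\int_{\Omega'}\chi^2\bigl(|\nabla u_m|^2 + m^2|u_m|^2\bigr) + \tfrac{1}{2}\int_{\Omega'}\nabla(\chi^2)\cdot\nabla|u_m|^2 = -\RE\braket{\pa_\n u_m,\chi^2 u_m}_{\Gamma}.
\]
Substituting this into the four-term expression for $\mathcal Q_m(u_m\chi)$, the $\tfrac{1}{2}\nabla(\chi^2)\cdot\nabla|u_m|^2$ term cancels and only $-\braket{\pa_\n u_m,\chi^2 u_m}_{\Gamma} + \|(\nabla\chi)u_m\|^2_{L^2(\Omega')}$ survives, giving \eqref{eq:idfq1} (the $\RE$ on the boundary pairing is implicit, which is consistent as the left hand side is real).

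Finally, the same computation inside the tubular strip $\mathcal V_{\delta}$ yields the analogous identity for $u_{m,\delta}$: the only extra boundary term would come from $\{\mathrm{dist}(\x,\Gamma)=\delta\}$, but $u_{m,\delta}$ vanishes there by the definition of $V_{v,\delta}$, so it drops out. The only mild technical point is the justification of Green's identity; since both $u_m$ and $u_{m,\delta}$ are weak $H^1$ solutions of a uniformly elliptic second-order equation with $H^{1/2}$-trace data on a smooth boundary, standard elliptic regularity provides enough smoothness near $\Gamma$ to give meaning to $\pa_\n u_m$ as a trace and to the integration by parts. This is the only place where care is required, and it is routine.
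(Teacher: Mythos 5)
Your argument is correct and follows essentially the same route as the paper: both expand $\mathcal{Q}_m(u_m\chi)$ by the Leibniz rule and use the Euler--Lagrange equation $(-\Delta+m^2)u_m=0$ together with a Green-type integration by parts to convert the cross term into the boundary contribution $-\braket{\pa_\n u_m,\chi^2 u_m}_{\Gamma}$. The only cosmetic difference is that you fold the cross term into $\tfrac12\nabla(\chi^2)\cdot\nabla|u_m|^2$ before testing the equation against $\chi^2 u_m$, while the paper instead integrates $\|\chi\nabla u_m\|^2_{L^2(\Omega')}$ by parts directly; the computations coincide.
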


\begin{proof}
By definition, we have
	\[\begin{split}
		&\mathcal{Q}_m(u_m\chi) 
			= 
			m^2\|\chi u_m\|^2_{L^2(\Omega')} + \|(\nabla\chi) u_m + \chi(\nabla u_m)\|^2_{L^2(\Omega')} 
			\\
			&
			=
			m^2\|\chi u_m\|^2_{L^2(\Omega')} 
			+ \|(\nabla\chi) u_m \|^2_{L^2(\Omega')} + \| \chi(\nabla u_m)\|^2_{L^2(\Omega')} 
			+ 2\Re\braket{u_m\chi, \nabla\chi\cdot \nabla u_m}_{\Omega'}\,.
	\end{split}\]
	Then, by an integration by parts,
	\[\begin{split}
		 \| \chi(\nabla u_m)\|^2_{L^2(\Omega')}  = -\braket{\pa_\n u_m,\chi^2u_m}_{\Gamma} -2\Re\braket{u_m\chi, \nabla\chi\cdot \nabla u_m}_{\Omega'} + \Re\braket{-\Delta u_m,\chi^2 u_m}_{\Omega'}\,.
	\end{split}\]
It remains to use Lemma \ref{lem:ELE} to get
\[
	\mathcal{Q}_m(u_m\chi) =  -\braket{\pa_\n u_m,\chi^2u_m}_{\Gamma} + \|(\nabla\chi) u_m \|^2_{L^2(\Omega')}.
\]
The conclusion follows.
\end{proof}

We can now establish the following important proposition.
\begin{proposition}\label{lem:agmon1}
	Let $\gamma\in(0,1)$.
	There exist $\delta_0>0$, and $C_1,\,C_2>0$ such that, for all $\delta\in(0,\delta_0)$ and all $m>0$,
\begin{equation}\label{eq.agmon1a}
\|e^{m\gamma{\rm dist}(\cdot , \Gamma)}u_m\|^2_{L^2(\Omega')}\leq C_1 \|u_m\|^2_{L^2(\Omega')}\,,
\end{equation}
and, for all $v\in V_v$, 
\begin{equation}\label{eq.agmon1b}
(1 - e^{-\gamma m^{1/2}}C_2m^{-1})\Lambda_{m,m^{-1/2}}(v)\leq\Lambda_{m}(v)\leq\Lambda_{m,\delta}(v)\,.
\end{equation}
\
\end{proposition}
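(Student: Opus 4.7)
The plan is to prove the Agmon decay~\eqref{eq.agmon1a} by applying the IMS identity of Lemma~\ref{lem.IMS} with an exponential weight, and then to deduce the sandwich~\eqref{eq.agmon1b} from it by truncating the minimizer at distance $m^{-1/2}$ from $\Gamma$.

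First, for~\eqref{eq.agmon1a}, I would take $\chi=\chi_R:=\min(e^{\gamma m d(\cdot)},R)$, which is bounded Lipschitz with $|\nabla\chi_R|^2\leq\gamma^2 m^2\chi_R^2$ a.e. Since $d|_\Gamma=0$, one has $\chi_R|_\Gamma=1$, so by Lemma~\ref{lem:ELE}\eqref{pt:cp2} the boundary term in~\eqref{eq:idfq1} reduces to $\Lambda_m(v)$. Using the trivial lower bound $\mathcal{Q}_m(\chi_R u_m)\geq m^2\|\chi_R u_m\|^2_{L^2(\Omega')}$, Lemma~\ref{lem.IMS} yields
\[
	(1-\gamma^2)m^2\|\chi_R u_m\|^2_{L^2(\Omega')}\leq \Lambda_m(v),
\]
and monotone convergence as $R\to+\infty$ gives $\|e^{\gamma m d}u_m\|^2_{L^2(\Omega')}\leq \Lambda_m(v)/((1-\gamma^2)m^2)$. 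To replace $\Lambda_m(v)/m^2$ by $C_1\|u_m\|^2$, I would prove an equipartition-type bound $\Lambda_m(v)\leq Cm^2\|u_m\|^2_{L^2(\Omega')}$ by combining two auxiliary estimates: on the one hand, the explicit test function $\tilde u(x)=(Ev)(x)\eta(d(x))$, with $E$ a bounded extension $H^1(\Omega)\to H^1(\R^3)$ and $\eta$ a smooth cut-off of $\{d<1/m\}$, yields after optimization $\Lambda_m(v)\leq Cm\|v\|^2_{H^1(\Omega)}$, and in particular $\Lambda_m(v)\lesssim m\|v\|^2_{L^2(\Gamma)}$; on the other hand, a Poincaré-type estimate in the slab $\{0<d<c/m\}$ based on writing $u_m(x',t)=v(x')+\int_0^t\pa_s u_m(x',s)\dx s$ and absorbing $\|\pa_t u_m\|^2\leq \|\nabla u_m\|^2\leq\Lambda_m(v)$ via the previous bound gives $\|u_m\|^2_{L^2(\Omega')}\gtrsim \|v\|^2_{L^2(\Gamma)}/m$. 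Together these close~\eqref{eq.agmon1a}.

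For~\eqref{eq.agmon1b}, the right inequality $\Lambda_m(v)\leq \Lambda_{m,\delta}(v)$ is immediate, since extending any $u\in V_{v,\delta}$ by zero to $\Omega'$ gives an element of $V_v$ with the same energy. For the left inequality I take the minimizer $u_m$ and multiply it by a smooth cut-off $\chi$ with $\chi\equiv 1$ on $\{d\leq m^{-1/2}/2\}$, $\chi\equiv 0$ on $\{d\geq m^{-1/2}\}$, and $|\nabla\chi|\leq Cm^{1/2}$, so that $\chi u_m\in V_{v,m^{-1/2}}$. Since $\chi|_\Gamma=1$, Lemma~\ref{lem.IMS} gives $\Lambda_{m,m^{-1/2}}(v)\leq \mathcal{Q}_m(\chi u_m)=\Lambda_m(v)+\|(\nabla\chi)u_m\|^2_{L^2(\Omega')}$. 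Since $\nabla\chi$ is supported in $\{d\geq m^{-1/2}/2\}$, the Agmon bound~\eqref{eq.agmon1a} together with $\|u_m\|^2\leq\Lambda_m(v)/m^2$ controls the error by
\[
	\|(\nabla\chi)u_m\|^2\leq Cm\,e^{-\gamma m^{1/2}}\|u_m\|^2\leq \frac{C'}{m}\,e^{-\gamma m^{1/2}}\Lambda_m(v),
\]
and rearranging produces the sandwich inequality.

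The hard part will be the equipartition estimate $\Lambda_m(v)\leq Cm^2\|u_m\|^2$: naively $\Lambda_m(v)/(m^2\|u_m\|^2)$ is unbounded for high-frequency traces $v|_\Gamma$, and the argument really only closes because $u_m$ concentrates in a boundary layer of width $\sim 1/m$. Capturing this concentration quantitatively requires the Poincaré-type estimate on the thin tubular slab together with the explicit test-function construction described above.
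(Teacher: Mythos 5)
Your application of Lemma~\ref{lem.IMS} with $\chi_R=\min(e^{\gamma m\,{\rm dist}(\cdot,\Gamma)},R)$ is sound, and your derivation of~\eqref{eq.agmon1b} from~\eqref{eq.agmon1a} is essentially the paper's argument. But the proof of~\eqref{eq.agmon1a} has a genuine gap at the step you call the ``hard part'': the equipartition bound $\Lambda_m(v)\leq C\,m^2\|u_m\|^2_{L^2(\Omega')}$, with $C$ uniform in $v$, is \emph{false}. In the half-space model $\Omega'=\{t>0\}$ with $v=\hat v\,e^{ik\cdot s}$, one has $u_m=\hat v\,e^{ik\cdot s}e^{-\sqrt{m^2+|k|^2}\,t}$, hence $\Lambda_m(v)=\sqrt{m^2+|k|^2}\,|\hat v|^2$, $m^2\|u_m\|^2_{L^2(\Omega')}=m^2|\hat v|^2/(2\sqrt{m^2+|k|^2})$, and the ratio equals $2(1+|k|^2/m^2)$, which is unbounded in $|k|$. (The Agmon estimate~\eqref{eq.agmon1a} itself still holds here, since $\|e^{\gamma m t}u_m\|^2/\|u_m\|^2=\sqrt{m^2+|k|^2}/(\sqrt{m^2+|k|^2}-\gamma m)\leq 1/(1-\gamma)$, so it is your route, not the statement, that breaks.) Both of your auxiliary estimates also fail for $|k|\gg m$: the inference from $\Lambda_m(v)\lesssim m\|v\|^2_{H^1(\Omega)}$ to $\Lambda_m(v)\lesssim m\|v\|^2_{L^2(\Gamma)}$ goes the wrong way through the trace theorem (and the latter is false, since $\Lambda_m(v)\sim|k|\,\|v\|^2_{L^2(\Gamma)}$); and $\|u_m\|^2_{L^2(\Omega')}\gtrsim\|v\|^2_{L^2(\Gamma)}/m$ fails because the minimizer concentrates in a layer of width $\sim 1/\sqrt{m^2+|k|^2}$, much thinner than $1/m$ for large $|k|$.

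The paper avoids ever comparing $\Lambda_m(v)$ with $m^2\|u_m\|^2$ by instead \emph{cancelling} $\Lambda_m(v)$. After the IMS identity $\mathcal{Q}_m(u_m\chi_m)=\Lambda_m(v)+\|(\nabla\chi_m)u_m\|^2$, one inserts a quadratic partition of unity $\chi_1^2+\chi_2^2=1$ with $\chi_1$ supported in $\{{\rm dist}(\cdot,\Gamma)\leq R/m\}$ and $|\nabla\chi_j|\lesssim m/R$. Since $u_m\chi_m\chi_1\in V_v$, one has $\mathcal{Q}_m(u_m\chi_m\chi_1)\geq\Lambda_m(v)$, which eliminates $\Lambda_m(v)$ from the resulting identity. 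Using $\mathcal{Q}_m(u_m\chi_m\chi_2)\geq m^2\|u_m\chi_m\|^2-m^2\|u_m\chi_m\chi_1\|^2$, and $\chi_m\leq e^{\gamma R}$ on $\supp\chi_1$ so that $\|u_m\chi_m\chi_1\|^2\leq e^{2\gamma R}\|u_m\|^2$, one is left with $\big(1-\gamma^2-\mathscr{O}(R^{-2})\big)\|u_m\chi_m\|^2\leq e^{2\gamma R}\|u_m\|^2$; choosing $R$ large and letting the truncation parameter tend to $0$ gives~\eqref{eq.agmon1a} directly, with no comparison between $\Lambda_m(v)$ and $\|u_m\|^2$ ever needed.
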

		
	\begin{proof}
Let us first prove \eqref{eq.agmon1a}. Given $\eps>0$, we define
	\[
		\Phi : \x \mapsto \min(\gamma \rm{dist}(\x, \Gamma),\eps^{-1}),
	\]
	\[
		\chi_m : \x\mapsto e^{m\Phi(\x)}.
	\]
	Let $ c>1$ and $R>0$.
	Let $\chi_{1,m,R}, \chi_{2,m,R}$ be a smooth quadratic partition of the unity  such that 
	\[
		\chi_{1,m,R}(\x) = \begin{cases}
			1 &\mbox{ if } \mathrm{dist}(\x,\Gamma)\leq R/2m\\
			0 &\mbox{ if } \mathrm{dist}(\x, \Gamma)\geq R/m
		\end{cases}
	\]
	and, for $k\in \{1,2\}$,
	\[
		\|\nabla \chi_{k,m,R}\|_{L^\infty(\Omega')}\leq \frac{2m c}{R}\,.
	\]
	Since $\chi_m$ is a bounded, Lipschitz function and is equal to $1$ on $\Gamma$, we get $u_m\chi_m\in V_v$. 
	By definition and using \eqref{eq:idfq1}, we get
	
	\[\Lambda_m(v) = \mathcal{Q}_m(u_m) = -\braket{\pa_\n u_m,u_m}_{\Gamma} = \mathcal{Q}_m(u_m\chi_m)
		- \|(\nabla\chi_m) u_m \|^2_{L^2(\Omega')}\,.\]
	Then, we use the fact that $\nabla (\chi_{1,m,R}^2+\chi_{2,m,R}^2) = 0$ to get
\[
\begin{split}	
\mathcal{Q}_m(u_m) &=\mathcal{Q}_m(u_m\chi_m\chi_{1,m,R})+\mathcal{Q}_m(u_m\chi_m\chi_{2,m,R})- \|(\nabla\chi_m) u_m \|^2_{L^2(\Omega')}\\&\quad
-\|(\nabla\chi_{1,m,R})\chi_m u_m \|^2_{L^2(\Omega')}
-\|(\nabla\chi_{2,m,R})\chi_m u_m \|^2_{L^2(\Omega')}\,.
\end{split}
\]
	Since 
	$\mathcal{Q}_m(u_m\chi_m\chi_{1,m,R})\geq \Lambda_m(v)$ 
	and 
	\[
	\begin{split}
		\mathcal{Q}_m(u_m\chi_m\chi_{2,m,R})
		&\geq m^2\norm{u_m\chi_m\chi_{2,m,R}}_{L^2(\Omega')}^2 
		\\&
		= m^2\norm{u_m\chi_m}_{L^2(\Omega')}^2 - m^2\norm{u_m\chi_m\chi_{1,m,R}}_{L^2(\Omega')}^2\,,
	\end{split}
	\]
	we get that 
	\begin{multline*}
		 m^2\Big(1-\gamma^2-\frac{8{c}^2}{R^2}\Big)\norm{u_m\chi_m}_{L^2(\Omega')}^2
		 \leq
		 m^2\norm{u_m\chi_m\chi_{1,m,R}}_{L^2(\Omega')}^2 
		 \\
		 \leq 
		 m^2 e^{2m\min\left( \frac{\gamma R}{m}, \frac{1}{\eps}\right)}\norm{u_m}_{L^2(\Omega')}^2 
		 \leq 
		 m^2 e^{2\gamma R}\norm{u_m}_{L^2(\Omega')}^2\,.
	\end{multline*}
	Taking $R>0$ big enough to get $1-\gamma^2-\frac{8{c}^2}{R^2}>0$, we get that
	\[
		\norm{u_m\chi_m}_{L^2(\Omega')}^2\leq C\norm{u_m}_{L^2(\Omega')}^2
	\]
	where $C$ does not depend on $\eps$. Taking the limit $\eps\to0$ and using the Fatou lemma we obtain \eqref{eq.agmon1a}.

Let us now prove \eqref{eq.agmon1b}. We have for any $\delta\in(0,\delta_0)$ that $V_{v,\delta}\subset V_v$ so that 
	\[
	\Lambda_{m}(v)\leq\Lambda_{m,\delta}(v).
	\]
	Let us consider a Lipschitz function $\tilde \chi_m : \Omega' \rightarrow [0,1]$ defined for all $\x\in \Omega'$ by
	\[
		\tilde \chi_m(\x) =
		\begin{cases}
			 1 & \mbox{ if }\mathrm{dist}(\x,\Gamma)\leq \frac{1}{2m^{1/2}}\\
			 0 & \mbox{ if }\mathrm{dist}(\x,\Gamma)\geq \frac{1}{m^{1/2}}
		\end{cases}\,,
	\]
	with $\|\nabla \tilde\chi_{m}\|_{L^\infty(\Omega')}\leq 2 cm^{1/2}$. Thanks to \eqref{eq:idfq1}, we find
\begin{equation}\label{eq.agmon1a bis}	
\Lambda_{m,m^{-1/2}}(v)\leq \mathcal{Q}_m(u_m\tilde \chi_m) = \Lambda_m(v) + \|u_m\nabla \tilde \chi_m\|^2_{L^2(\Omega')}\,.
\end{equation}
Then, by \eqref{eq.agmon1a} we have
\[ \|u_m\nabla \tilde \chi_m\|^2_{L^2(\Omega')}\leq  e^{-\gamma m^{1/2}}4 c^2m\|e^{m\gamma{\rm dist}(\cdot , \Gamma)}u_m\|^2_{L^2(\Omega')}\leq 
C_1e^{-\gamma m^{1/2}}4 c^2m \|u_m\|^2_{L^2(\Omega')}\,.\]
Observing that
\[m\|u_{m}\|^2_{L^2(\Omega')}\leq m^{-1}\Lambda_{m}(v)\,,\]
and using \eqref{eq.agmon1a bis} we easily get \eqref{eq.agmon1b}.	
\end{proof}

\subsection{Optimization problem in a tubular neighborhood}
From Proposition \ref{lem:agmon1}, we see that, in order to estimate $\Lambda_{m}(v)$, it is sufficient to estimate $\Lambda_{m,m^{-1/2}}(v)$. For that purpose, we will use tubular coordinates. 
\subsubsection{Tubular coordinates}
Let $\iota$ be the canonical embedding of $\Gamma$ in $\R^3$ and $g$ the induced metric on $\Gamma$. $(\Gamma,g)$ is a $\mathcal{C}^4$ Riemannian manifold, which we orientate according to the ambient space. Let us introduce the map $\Phi:\Gamma\times(0,\delta)\to\mathcal{V}_{\delta}$ defined by the formula
\begin{equation*}
\Phi(s,t)=\iota(s)+t\n(s)\,
\end{equation*}
where $\mathcal{V}_\delta$ is defined in \eqref{pb:extopt1} below.
The transformation $\Phi$ is a $\mathcal{C}^3$ diffeomorphism for any $\delta\in(0,\delta_0)$ provided that $\delta_0$ is sufficiently small. The induced metric on $\Gamma\times(0,\delta)$ is given by
\[G=g\circ (\mathsf{Id}+tL(s))^{2}+\dx t^2\,,\]
where $L(s)=d\n_{s}$ is the second fundamental form of the boundary at $s$.
Let us now describe how our optimization problem is transformed under the change of coordinates. For all $u\in L^{2}(\mathcal V_{\delta})$, we define the pull-back function
\begin{equation}\label{eq:trans-st}
\widetilde u(s,t):= u(\Phi(s,t)).
\end{equation}
For all $u\in H^{1}(\mathcal{V}_{\delta})$, we have
\begin{equation}\label{eq:bc;n}
\int_{\mathcal{V}_{\delta}}|u|^{2}\dx\x=\int_{\Gamma\times(0,\delta)}|\widetilde u(s,t)|^{2}\,\tilde a\dx \Gamma \dx t\,,
\end{equation}
\begin{equation}\label{eq:bc;qf}
\int_{\mathcal{V}_{\delta}}|\nabla u|^{2}\dx\x= \int_{\Gamma\times(0,\delta)} \Big[\langle\nabla_{s} \widetilde u,\tilde g^{-1}\nabla_{s} \widetilde u\rangle +|\partial_{t}\widetilde u|^{2}\Big]\,\tilde a\dx \Gamma\dx t\,.
\end{equation}
where
\[\tilde g=\big(\mathsf{Id}+tL(s)\big)^2\,,\]
and $\tilde a(s,t)= |\tilde g(s,t)|^{\frac{1}{2}}$. Here $\langle\cdot,\cdot\rangle$ is the Euclidean scalar product and $\nabla_{s}$ is the differential on $\Gamma$ seen through the metric $g$.
Since $L(s)\in \R^{2\times 2}$, we have the exact formula
\begin{equation}\label{eq.Taylor3}
	\tilde{a}(s,t) = 1+t\kappa(s) + t^2K(s)
\end{equation}
%
where $\kappa$  and $K$ are defined in Notation \ref{not.def1}.
\Bk
%
In the following, we assume that 
\begin{equation}\label{eq:depdeltam}
	\delta = m^{-1/2}\,.
\end{equation}
In particular, we will use \eqref{eq:bc;n} and \eqref{eq:bc;qf} with this particular choice of $\delta$.
\subsubsection{The rescaled transition optimization problem in boundary coordinates}\label{sec.rescaled}

We introduce the rescaling
\[(s,\tau)=(s,mt)\,,\]
and the new weights
\begin{equation}\label{eq:Jac-a'}
\widehat a_m(s,\tau)=\tilde a(s,m^{-1}\tau)\,,\qquad \widehat g_m(s,\tau)=\tilde g(s,m^{-1}\tau)\,.
\end{equation}
Note that there exists $m_1\geq1$ such that for all $m\geq m_1$, $s\in \Gamma$ and $\tau\in[0,m^{1/2})$, $\widehat a_m(s,\tau)\geq 1/2$.
\Bk
We set
\begin{equation}\label{eq:dom-hat}
\begin{split}
\widehat{\mathcal V}_{m}&=\Gamma\times(0,\sqrt{m})\,,\\
\widehat V_{m}&=\{u\in H^1(\widehat{\mathcal V}_{m}, \C^4;\widehat a_m \dx \Gamma \dx\tau):~u(\cdot,\sqrt{m})=0\}\,,\\
\widehat{\mathscr{Q}}_{m}(u)&=m^{-1}\int_{\widehat{\mathcal V}_{m}}\Big(\langle\nabla_{s} u,\widehat g^{-1}_m\nabla_{s} u\rangle+m^2|\partial_\tau u|^2\Big)\widehat a_m \dx \Gamma \dx\tau\\
&\quad+m\int_{\widehat{\mathcal V}_{m}}|u|^2\widehat{a}_m\dx \Gamma\dx \tau\,,\\
\widehat{\mathscr{L}}_m&=-m^{-1} \widehat a^{-1}_m\nabla_s(\widehat a_m \widehat g^{-1}_m\nabla_s)
+m\left(-\widehat a^{-1}_m\partial_\tau\widehat a_m\partial_\tau
+1\right)\,.
\end{split}
\end{equation} 
\begin{notation}\label{notnation curvatures}
Let $m\geq m_1$, 
 $k, K\in\R$ and define
\[
	\begin{split}
	a_{m,\kappa,K} :  (0,\sqrt{m})& \longrightarrow \R
	\\
	\tau&\longmapsto1 + \frac{\tau\kappa}{m} + \frac{\tau^2K}{m^2}.
	\end{split}
\]	
\end{notation}
We let
\begin{equation}\label{eq:bcur}
	A =\|\kappa\|_{L^\infty(\Gamma)} \mbox{ and } B = \|K\|_{L^\infty(\Gamma)}\,.
\end{equation}
%
\begin{remark}\label{rem:m0}
We can assume (up to taking a larger $m_1$) that for any
\[
	(m,\kappa,K)\in[m_1,+\infty)\times[-A,A]\times  [-B,B],
\]
we have 
$
	a_{m,\kappa,K}(\tau)\geq 1/2
$
for all $\tau\in(0,\sqrt{m})$.
\end{remark}
In the following, we assume that $m\geq m_1$.

\subsection{One dimensional optimization problem with parameters}
We denote by $\widehat{\mathscr{Q}}_{m,\kappa,K}$ the \enquote{tranversed} quadratic form defined for $u\in H^1((0,\sqrt{m}), a_{m,\kappa,K}\dx\tau)$ by
\[
	\widehat{\mathscr{Q}}_{m,\kappa,K}(u) 
	= \int_{0}^{\sqrt{m}}
		\Big(|\partial_\tau u|^2 + |u|^2\Big)a_{m,\kappa,K} \dx\tau.
\]
We let
\begin{equation}\label{eq:optiextpar}
	\Lambda_{m,\kappa,K} = \inf\{
		\widehat{\mathscr{Q}}_{m,\kappa,K}(u)  : u\in \widehat{V}_{m,\kappa,K}
	\}
\end{equation}
where
\[
	\widehat{V}_{m,\kappa,K} = \left\{
		u \in H^1((0,\sqrt{m}), a_{m,\kappa,K}\dx\tau) : u(0) = 1, \ u(\sqrt{m})=0
	\right\}.
\]
The following lemma follows from the same arguments as for Lemma \ref{lem.existence}.
\begin{lemma}\label{unique minim 1d}
There is a unique minimizer $u_{m,\kappa,K}$ for the optimization problem \eqref{eq:optiextpar}.
\end{lemma}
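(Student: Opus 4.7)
The plan is to transcribe the argument of Lemma \ref{lem.existence} to this one-dimensional weighted setting, where essentially everything simplifies. First I would check that $\widehat V_{m,\kappa,K}$ is non-empty and convex: the affine function $\tau\mapsto 1-\tau/\sqrt m$ belongs to it, and the set is an affine translate of $\{v\in H^1((0,\sqrt m)):v(0)=v(\sqrt m)=0\}$. By Remark \ref{rem:m0} the weight $a_{m,\kappa,K}$ is bounded between $1/2$ and $1+A m^{-1/2}+Bm^{-1}$ on $(0,\sqrt m)$, so the quadratic form $\widehat{\mathscr Q}_{m,\kappa,K}$ is equivalent, up to constants depending on $m$, $\kappa$ and $K$, to the standard squared $H^1$-norm on $(0,\sqrt m)$.

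For existence I would take a minimizing sequence $(u_n)\subset\widehat V_{m,\kappa,K}$. Its energies being bounded, the norm equivalence above provides a uniform $H^1$-bound, so after extraction $u_n\rightharpoonup u_\infty$ weakly in $H^1((0,\sqrt m))$. The key simplification compared with Lemma \ref{lem.existence} is that in one dimension the embedding $H^1((0,\sqrt m))\hookrightarrow C^0([0,\sqrt m])$ is compact, so $u_n\to u_\infty$ uniformly; in particular the endpoint values $u_n(0)=1$ and $u_n(\sqrt m)=0$ pass directly to the limit and $u_\infty\in\widehat V_{m,\kappa,K}$. Weak lower semicontinuity of the continuous convex functional $\widehat{\mathscr Q}_{m,\kappa,K}$ then yields
\[
\widehat{\mathscr Q}_{m,\kappa,K}(u_\infty)\leq\liminf_{n\to\infty}\widehat{\mathscr Q}_{m,\kappa,K}(u_n)=\Lambda_{m,\kappa,K},
\]
so $u_\infty$ is a minimizer.

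Uniqueness follows from strict convexity of $\widehat{\mathscr Q}_{m,\kappa,K}$ on the convex set $\widehat V_{m,\kappa,K}$: since $a_{m,\kappa,K}\geq 1/2>0$, the associated bilinear form is positive definite, and two distinct minimizers $u,v$ would give $\widehat{\mathscr Q}_{m,\kappa,K}\bigl(\tfrac{u+v}{2}\bigr)<\tfrac12\bigl(\widehat{\mathscr Q}_{m,\kappa,K}(u)+\widehat{\mathscr Q}_{m,\kappa,K}(v)\bigr)=\Lambda_{m,\kappa,K}$, a contradiction. No real obstacle is expected here: the argument is in fact strictly easier than in Lemma \ref{lem.existence} because the one-dimensional compact embedding makes the passage of the Dirichlet conditions automatic, bypassing the use of Rellich--Kondrachov in fractional spaces and of a trace theorem.
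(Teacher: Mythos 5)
Your proof is correct and follows the same overall variational template that the paper implicitly invokes (the paper's proof is a single sentence: ``follows from the same arguments as for Lemma~\ref{lem.existence}''), namely: minimizing sequence, weak $H^1$ compactness, passage of the boundary data to the limit, weak lower semicontinuity of the convex quadratic form, and strict convexity for uniqueness. The one place where your argument genuinely diverges from the proof of Lemma~\ref{lem.existence} is in how the constraint passes to the limit: there the paper uses Rellich--Kondrachov in $H^s_{\rm loc}$ for $s<1$ together with interpolation and the trace theorem to control the boundary values, whereas you exploit the one-dimensional compact embedding $H^1((0,\sqrt m))\hookrightarrow C^0([0,\sqrt m])$ so that the pointwise conditions $u(0)=1$, $u(\sqrt m)=0$ survive the limit automatically. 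This is a legitimate simplification of the transcription: in dimension one the endpoint ``trace'' is just pointwise evaluation and the machinery of fractional Sobolev traces is unnecessary. Your observation that $a_{m,\kappa,K}\geq 1/2$ (Remark~\ref{rem:m0}) makes $\widehat{\mathscr Q}_{m,\kappa,K}$ equivalent to the squared $H^1$ norm is exactly what gives coercivity and the strict convexity needed for uniqueness, and the explicit test function $1-\tau/\sqrt m$ correctly witnesses non-emptiness of $\widehat V_{m,\kappa,K}$. No gaps.
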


\begin{lemma}\label{lem.ipp1D}
Let $u,v\in H^1((0,\sqrt{m}), a_{m,\kappa,K}\dx\tau)$ be such that $u(\sqrt{m}) = v(\sqrt{m})= 0$. We have
	\begin{equation}\label{eq:IPPno}\begin{split}
		\int_0^{\sqrt{m}}\langle
		\pa_\tau u
		,\pa_\tau v
	\rangle a_{m,\kappa,K}\dx\tau
	&+
	\int_0^{\sqrt{m}}\langle
		u
		,v	
	\rangle a_{m,\kappa,K}\dx\tau
	\\&
	=
		\int_0^{\sqrt{m}}\Big\langle
		\widehat{\mathcal{L}}_{m,\kappa,K} u
		,v	
	\Big\rangle a_{m,\kappa,K}\dx\tau
	-\langle\pa_\tau u(0),v(0)\rangle\,,
	\end{split}\end{equation}
	where
\[
		\widehat{\mathcal{L}}_{m,\kappa,K} =- a^{-1}_{m,\kappa,K}\partial_\tau a_{m,\kappa,K}\partial_\tau
+1=  -\pa^2_\tau - \frac{m^{-1}\kappa + m^{-2}2K\tau}{1 + m^{-1}\kappa\tau + m^{-2}K\tau^{2}}\pa_\tau + 1\,.
\]
\end{lemma}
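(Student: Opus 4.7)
The statement is a one-dimensional weighted integration-by-parts identity, so the plan is to start from the left-hand side, integrate by parts in $\tau$ the first term, and then re-express the resulting bulk contribution in the weighted form appearing on the right-hand side.

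First, I would observe that since $a_{m,\kappa,K}(\tau)=1+m^{-1}\kappa\tau+m^{-2}K\tau^{2}$ is a smooth polynomial in $\tau$, and since by Remark \ref{rem:m0} it satisfies $a_{m,\kappa,K}(\tau)\geq 1/2$ on $(0,\sqrt{m})$, the weighted space $H^{1}((0,\sqrt{m}),a_{m,\kappa,K}\dx\tau)$ coincides as a set (with equivalent norms) with the unweighted $H^{1}((0,\sqrt{m});\C^{4})$. In particular, the traces $u(0),u(\sqrt{m}),v(0),v(\sqrt{m})$ are well defined, and the standard one-dimensional integration by parts applies. To make the final computation cleanly, it is also useful to note that $a_{m,\kappa,K}(0)=1$.

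Second, I would compute
\[
\int_{0}^{\sqrt{m}}\langle\pa_\tau u,\pa_\tau v\rangle\,a_{m,\kappa,K}\dx\tau
=\bigl[\langle a_{m,\kappa,K}\pa_\tau u,v\rangle\bigr]_{0}^{\sqrt{m}}
-\int_{0}^{\sqrt{m}}\bigl\langle\pa_\tau\bigl(a_{m,\kappa,K}\pa_\tau u\bigr),v\bigr\rangle\dx\tau.
\]
The boundary contribution at $\tau=\sqrt{m}$ vanishes because $v(\sqrt{m})=0$, and the one at $\tau=0$ equals $-\langle\pa_\tau u(0),v(0)\rangle$ since $a_{m,\kappa,K}(0)=1$. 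Dividing the bulk integrand by $a_{m,\kappa,K}$ and restoring the weight gives
\[
-\int_{0}^{\sqrt{m}}\bigl\langle a_{m,\kappa,K}^{-1}\pa_\tau(a_{m,\kappa,K}\pa_\tau u),v\bigr\rangle\,a_{m,\kappa,K}\dx\tau,
\]
so adding the second term $\int_{0}^{\sqrt{m}}\langle u,v\rangle a_{m,\kappa,K}\dx\tau$ on the left reconstructs exactly $\int\langle\widehat{\mathcal{L}}_{m,\kappa,K}u,v\rangle a_{m,\kappa,K}\dx\tau$, where $\widehat{\mathcal{L}}_{m,\kappa,K}=-a_{m,\kappa,K}^{-1}\pa_\tau a_{m,\kappa,K}\pa_\tau+1$. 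Finally, expanding $\pa_\tau a_{m,\kappa,K}=m^{-1}\kappa+2m^{-2}K\tau$ and dividing by $a_{m,\kappa,K}$ produces the explicit second formula for $\widehat{\mathcal{L}}_{m,\kappa,K}$ stated in the lemma.

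There is no real obstacle here: the argument is purely a weighted one-dimensional integration by parts. The only small care needed is justifying the manipulation at the level of $H^{1}$ regularity, which follows from the equivalence of the weighted and unweighted Sobolev norms noted above, so a density argument from $C^{\infty}([0,\sqrt{m}];\C^{4})$ functions vanishing at $\sqrt{m}$ is enough to legitimize the boundary-term computation.
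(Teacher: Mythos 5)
Your proposal is correct and takes essentially the same approach as the paper, which simply states that the lemma follows by integration by parts and Notation \ref{notnation curvatures}; you have just supplied the details (equivalence of weighted and unweighted $H^1$ norms, vanishing of the $\tau=\sqrt{m}$ boundary term, the normalization $a_{m,\kappa,K}(0)=1$, and the expansion of $\partial_\tau a_{m,\kappa,K}$) that the paper leaves implicit.
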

\begin{proof}
The lemma follows essentially by integration by parts and Notation \ref{notnation curvatures}.
\end{proof}
\begin{lemma}\label{lem.EL1D}
We have that $u_{m,\kappa,K}\in \mathcal{C}^\infty([0,\sqrt{m}])$ and
\[\widehat{\mathcal{L}}_{m,\kappa,K} u_{m,\kappa,K} = 0\,,\qquad \Lambda_{m,\kappa,K}=-\pa_\tau  u_{m,\kappa,K} (0)\,,\]
where $u_{m,\kappa,K}$ is defined in Lemma \ref{unique minim 1d}.
Moreover, for all $u\in \widehat{V}_{m,\kappa,K}$,
\[\widehat{\mathscr{Q}}_{m,\kappa,K}(u)  = \Lambda_{m,\kappa,K}+\widehat{\mathscr{Q}}_{m,\kappa,K}(u-u_{m,\kappa,K})\,.\]
\end{lemma}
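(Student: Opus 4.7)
The plan is to obtain the Euler--Lagrange equation by differentiating the variational problem, promote the weak equation to a classical ODE via regularity of its coefficients, and derive the two remaining identities through the integration-by-parts formula of Lemma~\ref{lem.ipp1D}. For any $v \in H^1((0,\sqrt m))$ with $v(0) = v(\sqrt m) = 0$, the perturbation $u_{m,\kappa,K} + t v$ stays in $\widehat{V}_{m,\kappa,K}$ for every $t \in \C$, so the map $t \mapsto \widehat{\mathscr{Q}}_{m,\kappa,K}(u_{m,\kappa,K} + t v)$ has a minimum at $t = 0$. Separating real and imaginary variations (equivalently, testing with $v$ and $iv$) gives
\[
	\int_0^{\sqrt m} \bigl(\langle \pa_\tau u_{m,\kappa,K},\pa_\tau v\rangle + \langle u_{m,\kappa,K},v\rangle\bigr)\,a_{m,\kappa,K}\dx\tau = 0,
\]
and Lemma~\ref{lem.ipp1D}, whose boundary contribution disappears because $v(0) = 0$, converts this into the distributional equation $\widehat{\mathcal{L}}_{m,\kappa,K} u_{m,\kappa,K} = 0$ on $(0,\sqrt m)$.

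Since $a_{m,\kappa,K}$ is a polynomial in $\tau$ bounded below by $1/2$ on $[0,\sqrt m]$ by Remark~\ref{rem:m0}, the operator $\widehat{\mathcal{L}}_{m,\kappa,K}$ is a second-order linear ODE on $[0,\sqrt m]$ with smooth coefficients and non-degenerate leading term. A standard bootstrap (or the Cauchy--Lipschitz theorem applied to the associated first-order system) upgrades the weak solution to $u_{m,\kappa,K} \in \mathcal{C}^\infty([0,\sqrt m])$, with the equation now satisfied pointwise. For the boundary identity, I apply Lemma~\ref{lem.ipp1D} with $u = v = u_{m,\kappa,K}$: the left-hand side is exactly $\widehat{\mathscr{Q}}_{m,\kappa,K}(u_{m,\kappa,K}) = \Lambda_{m,\kappa,K}$, the integral on the right vanishes by the Euler--Lagrange equation just derived, and the remaining term equals $-\langle \pa_\tau u_{m,\kappa,K}(0), u_{m,\kappa,K}(0)\rangle = -\pa_\tau u_{m,\kappa,K}(0)$ because $u_{m,\kappa,K}(0) = 1$.

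For the decomposition identity, fix $u \in \widehat{V}_{m,\kappa,K}$ and set $w = u - u_{m,\kappa,K}$, so that $w(0) = 0$ and $w(\sqrt m) = 0$. Expanding the quadratic form gives
\[
	\widehat{\mathscr{Q}}_{m,\kappa,K}(u) = \widehat{\mathscr{Q}}_{m,\kappa,K}(u_{m,\kappa,K}) + \widehat{\mathscr{Q}}_{m,\kappa,K}(w) + 2\Re\int_0^{\sqrt m}\bigl(\langle \pa_\tau u_{m,\kappa,K}, \pa_\tau w\rangle + \langle u_{m,\kappa,K}, w\rangle\bigr)\, a_{m,\kappa,K}\dx\tau.
\]
By Lemma~\ref{lem.ipp1D} applied to the pair $(u_{m,\kappa,K}, w)$, the cross-term equals $2\Re \int \langle \widehat{\mathcal{L}}_{m,\kappa,K} u_{m,\kappa,K}, w\rangle\, a_{m,\kappa,K}\dx\tau - 2\Re \langle \pa_\tau u_{m,\kappa,K}(0), w(0)\rangle$, both contributions vanishing -- the first by the ODE, the second because $w(0) = 0$. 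The claimed splitting follows. The whole argument is routine; the only mild point of care is to separate real and imaginary variations in the first step so as to recover the full (vector-valued) equation rather than merely its real part.
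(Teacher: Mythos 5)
Your proof is correct and carries out exactly the argument the paper leaves implicit: the paper's own proof is the one-line remark that the lemma ``follows from Lemma~\ref{lem.ipp1D},'' and your derivation---extracting the Euler--Lagrange equation by complex variations, bootstrapping regularity from the non-degeneracy of $a_{m,\kappa,K}$, and then applying the integration-by-parts identity with the right test pairs to get both the boundary formula and the quadratic splitting---is precisely the route that remark points to.
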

\Bk
\begin{proof}
This follows from Lemma \ref{lem.ipp1D}.

\end{proof}

The aim of this section is to establish an accurate estimate of $\Lambda_{m,\kappa,K}$.
\begin{proposition}\label{lem:ext}
	There exists a constant $C>0$ such that for all 
	\[
	(m,\kappa,K)\in[m_1,+\infty)\times[-A,A]\times  [-B,B],
	\]
\[\left|\Lambda_{m,\kappa,K} - \left(1 + \frac{\kappa}{2m} + \frac{1}{m^2}\left(\frac{K}{2}-\frac{\kappa^2}{8}\right)\right)\right|\leq Cm^{-3}\,,\]
and
\[\left|\int_0^{\sqrt{m}}|u_{m,\kappa,K}|^2a_{m,\kappa,K}\dx \tau - \frac{1}{2}\right| \leq Cm^{-1}\,.\]
\end{proposition}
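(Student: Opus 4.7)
The plan is to construct an explicit approximate minimizer $u_{\mathrm{app}} \in \widehat V_{m,\kappa,K}$ whose boundary slope $-u_{\mathrm{app}}'(0)$ equals exactly the target $E_{m,\kappa,K} := 1 + \frac{\kappa}{2m} + \frac{1}{m^2}\left(\frac{K}{2} - \frac{\kappa^2}{8}\right)$, and then to exploit the splitting identity of Lemma \ref{lem.EL1D}, the integration-by-parts formula of Lemma \ref{lem.ipp1D}, and Cauchy--Schwarz to control both the energy and the weighted $L^2$ norm in the statement. I take the ansatz $u_{\mathrm{app}}(\tau) = e^{-\tau}(1 + m^{-1}\phi_1(\tau) + m^{-2}\phi_2(\tau))$ with $\phi_j$ polynomials vanishing at $0$. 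Taylor-expanding $a_{m,\kappa,K}'/a_{m,\kappa,K}$ in powers of $1/m$ (legitimate since $\tau/m \le 1/\sqrt m$ on the domain) and matching the $m^{-1}$ and $m^{-2}$ coefficients of $\widehat{\mathcal L}_{m,\kappa,K} u_{\mathrm{app}}$ to zero produces linear ODEs for $\phi_1$ and $\phi_2$ with unique polynomial solutions (in particular $\phi_1(\tau) = -\kappa\tau/2$). A direct calculation then verifies $-u_{\mathrm{app}}'(0) = E_{m,\kappa,K}$. Finally, subtracting an exponentially small correction proportional to $u_{\mathrm{app}}(\sqrt m)$ (e.g.\ $u_{\mathrm{app}}(\sqrt m)\,\tau/\sqrt m$) enforces $u_{\mathrm{app}}(\sqrt m) = 0$ without altering $u_{\mathrm{app}}(0) = 1$ or any of the above quantities to finite order in $1/m$, placing $u_{\mathrm{app}}$ in $\widehat V_{m,\kappa,K}$.

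By construction, the residual $r_m := \widehat{\mathcal L}_{m,\kappa,K} u_{\mathrm{app}}$ has the form $e^{-\tau}$ times a polynomial in $\tau$ of bounded degree, with coefficients of order $m^{-3}$ uniformly in $(\kappa, K) \in [-A,A] \times [-B,B]$; the exponential factor absorbs the polynomial growth in $\tau \in (0,\sqrt m)$, yielding $\|r_m\|_{L^2(a_{m,\kappa,K}\dx\tau)} \le C m^{-3}$. Applying Lemma \ref{lem.ipp1D} with $u = v = u_{\mathrm{app}}$ gives
\[
\widehat{\mathscr Q}_{m,\kappa,K}(u_{\mathrm{app}}) = -u_{\mathrm{app}}'(0) + \int_0^{\sqrt m}\langle r_m, u_{\mathrm{app}}\rangle a_{m,\kappa,K}\dx\tau = E_{m,\kappa,K} + O(m^{-3}),
\]
the integral being $O(m^{-3})$ by Cauchy--Schwarz. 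This gives the upper bound $\Lambda_{m,\kappa,K} \le E_{m,\kappa,K} + Cm^{-3}$. For the matching lower bound, set $w := u_{\mathrm{app}} - u_{m,\kappa,K}$; then $w(0) = w(\sqrt m) = 0$, $\widehat{\mathcal L}_{m,\kappa,K} w = r_m$, and Lemma \ref{lem.ipp1D} applied to $u = v = w$ with Cauchy--Schwarz gives $\widehat{\mathscr Q}_{m,\kappa,K}(w) \le \|r_m\|_{L^2}\widehat{\mathscr Q}_{m,\kappa,K}(w)^{1/2}$, hence $\widehat{\mathscr Q}_{m,\kappa,K}(w) \le Cm^{-6}$. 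The splitting identity of Lemma \ref{lem.EL1D} then yields $\Lambda_{m,\kappa,K} = \widehat{\mathscr Q}_{m,\kappa,K}(u_{\mathrm{app}}) - \widehat{\mathscr Q}_{m,\kappa,K}(w) = E_{m,\kappa,K} + O(m^{-3})$, proving the first estimate.

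The second estimate follows in parallel. Direct expansion of $|u_{\mathrm{app}}|^2 a_{m,\kappa,K}$ gives $\int_0^{\sqrt m} |u_{\mathrm{app}}|^2 a_{m,\kappa,K}\dx\tau = \int_0^\infty e^{-2\tau}\dx\tau + O(m^{-1}) = 1/2 + O(m^{-1})$, the corrections coming from $|1+h|^2 - 1$ and from $a_{m,\kappa,K} - 1$, both of size $O(1/m)$ after integration against $e^{-2\tau}$. The error incurred in passing from $u_{\mathrm{app}}$ to $u_{m,\kappa,K} = u_{\mathrm{app}} - w$ is bounded by $2\|u_{\mathrm{app}}\|_{L^2}\|w\|_{L^2} + \|w\|_{L^2}^2 = O(m^{-3})$, which is absorbed in the $O(m^{-1})$. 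The main obstacle is purely computational: one must carefully expand $a_{m,\kappa,K}'/a_{m,\kappa,K}$ to order $m^{-3}$ and multiply by $(1 + h - h')$, verifying the precise cancellations that leave only $O(m^{-3})$ polynomial-in-$\tau$ contributions in $r_m$, uniformly in $(\kappa,K)\in[-A,A]\times[-B,B]$.
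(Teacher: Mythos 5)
Your proposal is correct and follows essentially the same strategy as the paper: a formal three-term expansion of the minimizer in powers of $m^{-1}$ (your $\phi_1(\tau)=-\kappa\tau/2$ and $\phi_2(\tau)=(\kappa^2/8-K/2)\tau+(3\kappa^2/8-K/2)\tau^2$ reproduce the paper's $u_1,u_2$), producing a trial function whose boundary slope is exactly $E_{m,\kappa,K}$ and whose residual $\widehat{\mathcal L}_{m,\kappa,K}u_{\mathrm{app}}$ is $\mathscr O(m^{-3})$ in $L^2$, after which Lemmas \ref{lem.ipp1D} and \ref{lem.EL1D} together with Cauchy--Schwarz yield both estimates. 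The only cosmetic difference is that you enforce $u(\sqrt m)=0$ by a linear subtraction rather than the paper's smooth cutoff $\chi_m$; both corrections are exponentially small and do not affect the result.
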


\begin{proof}
By Lemmas \ref{unique minim 1d} and \ref{lem.EL1D}, the unique solution $u_{m,\kappa,K}$ of the problem satisfies
\[
	\left(
	-\pa_\tau^2
	-\frac{m^{-1}\kappa + m^{-2}2K\tau}{1 + m^{-1}\kappa \tau + m^{-2}K\tau^2}\pa_\tau
	+1
	\right)u_{m,\kappa,K} = 0.
\]
We expand formally $u_{m,\kappa,K}$ as $u_0 + m^{-1}u_1 + m^{-2}u_2 + \mathscr{O}(m^{-3})$: 
\begin{enumerate}[\rm (i)]
\item	For the \emph{zero order} term, we get
\[
	(-\pa_\tau^2 +1)u_0 = 0 \mbox{ and } u_0(1) = 1, \lim_{\tau\to \infty} u_0(\tau) = 0,
\]
so that $u_0(\tau)= e^{-\tau}$.
\item At the \emph{first order},
\[
	(-\pa_\tau^2 +1)u_1 = \kappa\pa_\tau u_0 = -\kappa e^{-\tau} \mbox{ and } u_1(1) = 0, \lim_{\tau\to \infty} u_1(\tau) = 0,
\]
so that $u_1(\tau)= -\frac{\kappa}{2}\tau e^{-\tau}$.
\item At the \emph{second order},
\[\begin{split}
	&(-\pa_\tau^2 +1)u_2 = \kappa\pa_\tau u_1 + (\kappa^2-2K)\tau \pa_\tau u_0 = -\frac{\kappa^2}{2}e^{-\tau} + \left(\frac{3\kappa^2}{2}-2K\right)\tau e^{-\tau},
	\\
	&u_2(0) = 0 \mbox{ and }\lim_{\tau\to \infty} u_2(\tau) = 0,
\end{split}\]
so that
$u_2(\tau)=
	\left(
		\frac{\kappa^2}{8}-\frac{K}{2}
	\right)\tau e^{-\tau}
	+
	\left(
		\frac{3\kappa^2}{8}-\frac{K}{2}
	\right)\tau^2e^{-\tau}.
$
\end{enumerate}
This formal construction leads to define a possible approximation of $u_{m,\kappa,K}$. 
Consider 
\begin{equation}\label{eq:formaltestfun1}\begin{split}
v_{m,\kappa,K}(\tau) &:= \chi_m(\tau)\left(u_0(\tau) + m^{-1}u_1(\tau) + m^{-2}u_2(\tau)\right)\,,\\ 
\chi_m(\tau) &= \chi(\tau/\sqrt{m})\,,
\end{split}\end{equation}
where $\chi : \R_+\mapsto [0,1]$ is a smooth function such that
\[
	\chi(\tau) = \begin{cases}
		1 &\mbox{ if }\tau\in[0,1/2]\\
		0 &\mbox{ if }\tau\geq 1
	\end{cases}\,.
\]
In the following, we denote $v_m = v_{m,\kappa,K}$ to shorten the notation.
\Bk
We immediately get that $v_m$ belongs to $\widehat{V}_{m,\kappa,K}$. Note that
\begin{equation}\label{eq.deriv-vm}
-\partial_{\tau}v_{m}(0)=1 + \frac{\kappa}{2m} + m^{-2}\left(
			\frac{K}{2} - \frac{\kappa^2}{8}
		\right)\,,
\end{equation}
\begin{equation}\label{eq.approx-zero}
\|\widehat{\mathcal{L}}_{m,\kappa,K} v_m\|_{L^2((0,\sqrt{m}), a_{m,\kappa,K}\dx\tau)} = \mathscr{O}(m^{-3})\,.
\end{equation}
Using Lemmas \ref{lem.ipp1D} and \ref{lem.EL1D}, we have
\[	\Lambda_{m,\kappa,K}=
	\int_0^{\sqrt{m}}\Big\langle
		\pa_\tau u_{m,\kappa,K}
		,\pa_\tau v_m
	\Big\rangle a_{m,\kappa,K}\dx\tau
	+
	\int_0^{\sqrt{m}}\Big\langle
		u_{m,\kappa,K}
		,v_m
	\Big\rangle a_{m,\kappa,K}\dx\tau\,,
	\]
and
\[\begin{split}
	&
	 \Lambda_{m,\kappa,K}	
	=\int_0^{\sqrt{m}}\Big\langle
		\widehat{\mathcal{L}}_{m,\kappa,K} v_m	 
		,u_{m,\kappa,K}
	\Big\rangle a_{m,\kappa,K}\dx\tau
	-
	\pa_\tau v_{m}(0)\,.
\end{split}\]
By Lemma \ref{lem.ipp1D}, the Cauchy-Schwarz inequality, \eqref{eq.deriv-vm}, and \eqref{eq.approx-zero},
\[\begin{split}
	\left|\Lambda_{m,\kappa,K}\right.&-\left.\left(
		1 + \frac{\kappa}{2m} + m^{-2}\left(
			\frac{K}{2} - \frac{\kappa^2}{8}
		\right)
	\right)\right|
	\\
	&= \left|
	 	\int_0^{\sqrt{m}}\Big\langle
		\widehat{\mathcal{L}}_{m,\kappa,K} v_m,
		 u_{m,\kappa,K}
	\Big\rangle a_{m,\kappa,K}\dx\tau
	 \right|
	 \\
	 &\leq 
	  \|\widehat{\mathcal{L}}_{m,\kappa,K} v_m\|_{L^2((0,\sqrt{m}), a_{m,\kappa,K}\dx\tau)}
	 \|u_{m,\kappa,K}\|_{L^2((0,\sqrt{m}), a_{m,\kappa,K}\dx\tau)}
	 \\
	 &\leq \Lambda_{m,\kappa, K}^{\frac{1}{2}}  \|\widehat{\mathcal{L}}_{m,\kappa,K} v_m\|_{L^2((0,\sqrt{m}), a_{m,\kappa,K}\dx\tau)} \\
	 &\leq Cm^{-3}\Lambda_{m,\kappa, K}^{\frac{1}{2}}\,.
\end{split}\]
From this, it follows first that $\Lambda_{m,\kappa,K}=\mathscr{O}(1)$ uniformly in $(\kappa, K)$, and then the first estimate of the proposition is established.
Using Lemmas \ref{lem.ipp1D} and \ref{lem.EL1D}, the fact that $v_{m}(0)-u_{m,\kappa,K}(0) = 0$ and Cauchy-Schwarz inequality, we have
\begin{align*}
\widehat{\mathscr{Q}}_{m,\kappa,K}&(v_{m}-u_{m,\kappa,K})\\
&\leq  \|\widehat{\mathcal{L}}_{m,\kappa,K} (v_m-u_{m,\kappa,K})\|_{L^2((0,\sqrt{m}), a_{m,\kappa,K}\dx\tau)}\|v_m-u_{m,\kappa,K}\|_{L^2((0,\sqrt{m}), a_{m,\kappa,K}\dx\tau)}\\
&\leq Cm^{-3}\|v_m-u_{m,\kappa,K}\|_{L^2((0,\sqrt{m}), a_{m,\kappa,K}\dx\tau)}\,.
\end{align*}
The second estimate follows since
\[\|v_m-u_{m,\kappa,K}\|_{L^2((0,\sqrt{m}), a_{m,\kappa,K}\dx\tau)}^2\leq\widehat{\mathscr{Q}}_{m,\kappa,K}(v_{m}-u_{m,\kappa,K})\,,\]
and $\|v_{m}\|_{L^2((0,\sqrt{m}), a_{m,\kappa,K}\dx\tau)}^2=\frac{1}{2}+\mathscr{O}(m^{-1})$.

\end{proof}	
\subsection{Asymptotic study of $\Lambda_{m,m^{-1/2}}(v)$.}
From Proposition \ref{lem:ext} and \eqref{eq:dom-hat}, we deduce the following lower bound.
\begin{corollary}\label{cor:estilowreg}
	There exists $C>0$ such that for any $v\in H^1(\Omega)$,
	\[\begin{split}
		 o(1)&\geq\Lambda_{m,m^{-1/2}}(v)-\left(m\|v\|^2_{L^2(\Gamma)} + \int_\Gamma \frac{\kappa}{2}|v|^2\dx \Gamma\right)
		\geq-\frac{C}{m}\|v\|^2_{L^2(\Gamma)}\,,
	\end{split}\] 
	and
	\[
		Cm\norm{v}_{H^1(\Omega)}^2\geq\Lambda_{m,m^{-1/2}}(v)\,.
	\]
	Here, the term $o(1)$ depends on $v$ (not only on the $H^1$ norm of $v$).
\end{corollary}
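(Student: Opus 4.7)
My strategy is to pass to the rescaled tubular coordinates of Section \ref{sec.rescaled} and reduce the three-dimensional minimization to the family of transverse one-dimensional problems analyzed in Proposition \ref{lem:ext}. Using \eqref{eq:bc;n}--\eqref{eq:bc;qf} together with the rescaling $\tau=mt$ and the choice $\delta=m^{-1/2}$ from \eqref{eq:depdeltam}, the quantity $\Lambda_{m,m^{-1/2}}(v)$ equals the infimum of $\widehat{\mathscr{Q}}_m(\hat u)$ over $\hat u\in\widehat V_m$ satisfying $\hat u(\cdot,0)=v|_\Gamma$, so in particular it depends only on the trace of $v$ on $\Gamma$. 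For the lower bound I discard the nonnegative tangential piece $m^{-1}\int\langle\nabla_s\hat u,\widehat g_m^{-1}\nabla_s\hat u\rangle\widehat a_m$ in $\widehat{\mathscr{Q}}_m$, leaving $m\int_\Gamma\widehat{\mathscr{Q}}_{m,\kappa(s),K(s)}(\hat u(s,\cdot))\dx\Gamma$. For a.e.\ $s\in\Gamma$ with $v(s)\ne 0$, $\hat u(s,\cdot)/v(s)$ lies in $\widehat V_{m,\kappa(s),K(s)}$, so the inner slice is at least $|v(s)|^2\Lambda_{m,\kappa(s),K(s)}$. Applying Proposition \ref{lem:ext} together with the uniform bounds $A,B$ from \eqref{eq:bcur} then yields the required lower bound, with the higher-order remainders absorbed into $-Cm^{-1}\|v\|^2_{L^2(\Gamma)}$.

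For the upper bound the natural competitor is the tensorized Ansatz $\hat u_v(s,\tau):=v(s)\,u_{m,\kappa(s),K(s)}(\tau)$, which satisfies $\hat u_v(\cdot,0)=v|_\Gamma$ and $\hat u_v(\cdot,\sqrt{m})=0$ thanks to the boundary values of the 1D minimizer. Its transverse contribution is exactly $m\int_\Gamma|v(s)|^2\Lambda_{m,\kappa(s),K(s)}\dx\Gamma$ and provides the matching upper bound by Proposition \ref{lem:ext}. The tangential contribution carries the prefactor $m^{-1}$ and, provided $v|_\Gamma\in H^1(\Gamma)$, is of size $O(m^{-1})\|v|_\Gamma\|_{H^1(\Gamma)}^2$ thanks to the smoothness of $(\kappa,K)\mapsto u_{m,\kappa,K}$ as a solution of the ODE from Lemma \ref{lem.EL1D}, together with boundedness of $\nabla_s\kappa$ and $\nabla_s K$ on $\Gamma$.

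The main obstacle is that a generic $v\in H^1(\Omega)$ has trace only in $H^{1/2}(\Gamma)$, so the tangential estimate above need not even be finite. I would resolve this by density: approximate $v|_\Gamma$ in $H^{1/2}(\Gamma)$ by smooth functions $w_n$, apply the preceding upper bound to each $w_n$, and conclude using that $w\mapsto\Lambda_{m,m^{-1/2}}(w)$ is a nonnegative quadratic form of the trace (a direct consequence of Lemma \ref{lem:ELE}) continuous on $H^{1/2}(\Gamma)$, since the harmonic extension inside $\Omega$ combined with the coercivity bound provides $\Lambda_{m,m^{-1/2}}(w)\leq Cm\|w\|_{H^{1/2}(\Gamma)}^2$. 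A diagonal extraction in $(m,n)$ then yields the $o(1)$ error, allowed to depend on $v$. Finally, the coercivity bound $\Lambda_{m,m^{-1/2}}(v)\leq Cm\|v\|_{H^1(\Omega)}^2$ is obtained by testing $\mathcal{Q}_m$ with $u(\x):=V(\x)\,\phi(m\,\mathrm{dist}(\x,\Gamma))$, where $V$ is a reflection extension of $v$ across $\Gamma$ into $\mathcal{V}_{m^{-1/2}}$ and $\phi(\tau):=e^{-\tau}\chi(\tau/\sqrt{m})$ with $\chi(0)=1$, $\chi(1)=0$; the exponential decay ensures that both the mass term $m^2\|u\|_{L^2}^2$ and the normal-gradient term are controlled by $Cm\|v\|_{H^1(\Omega)}^2$, while the tangential gradient is bounded by $C\|v\|_{H^1(\Omega)}^2$.
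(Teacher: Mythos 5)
Your lower bound and coercivity bound are essentially the paper's argument, but your upper bound contains a genuine gap in the density step. The issue is that the continuity estimate you invoke, $\Lambda_{m,m^{-1/2}}(w)\leq Cm\|w\|^2_{H^{1/2}(\Gamma)}$, scales like $m$: if we set $R_m(v):=\Lambda_{m,m^{-1/2}}(v)-m\|v\|^2_{L^2(\Gamma)}-\int_\Gamma\frac{\kappa}{2}|v|^2\dx\Gamma$, then $R_m$ is a quadratic form, and what is needed to pass from smooth $w_n$ (where $R_m(w_n)\to 0$) to a general $v$ is a modulus of continuity for $R_m$ that is \emph{uniform in $m$}. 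You have no such bound: the difference $\Lambda_{m,m^{-1/2}}(v)-\Lambda_{m,m^{-1/2}}(w_n)$ and the compensating term $m\big(\|v\|^2_{L^2(\Gamma)}-\|w_n\|^2_{L^2(\Gamma)}\big)$ are each of order $m\|v-w_n\|$, and nothing forces their leading pieces to cancel; a diagonal extraction in $(m,n)$ does not repair this, because choosing $n=n(m)\to\infty$ makes $\|\nabla_s w_{n(m)}\|_{L^2(\Gamma)}$ unbounded in the explicit formula for the tensorized competitor. (The claimed bound $\Lambda_{m,m^{-1/2}}(w)\leq Cm\|w\|^2_{H^{1/2}(\Gamma)}$ via the harmonic extension is itself doubtful: the harmonic extension into a tube of width $m^{-1/2}$ has $L^2$ mass of order $m^{-1/4}\|w\|_{L^2(\Gamma)}$, so $m^2\|W\|^2_{L^2}$ is of order $m^{3/2}$, not $m$; a localized exponential profile repairs this for the mass term but requires $\nabla_s w\in L^2(\Gamma)$ for the tangential gradient, which you do not have.)

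The paper avoids all of this by taking a different competitor: it extends $v$ to $Ev\in H^1(\R^3)$ and uses $u_m=Ev\cdot\widetilde u_m$, where $\widetilde u_m$ is built from the \emph{explicit approximate} profile $v_{m,\kappa,K}$ of \eqref{eq:formaltestfun1}, not the exact minimizer $u_{m,\kappa,K}$. After an integration by parts, $\mathcal{Q}_m(u_m)$ splits into (a) the boundary term, which Proposition \ref{lem:ext} evaluates to $m\|v\|^2_{L^2(\Gamma)}+\int\frac\kappa2|v|^2+O(m^{-1})\|v\|^2_{L^2(\Gamma)}$; (b) $\|\widetilde u_m\nabla (Ev)\|^2_{L^2(\Omega')}$; and (c) the bulk term $\langle\widetilde u_m Ev,\,Ev(-\Delta+m^2)\widetilde u_m\rangle_{\Omega'}$. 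Because $\widetilde u_m$ is uniformly bounded in $W^{2,\infty}$ (thanks to the explicit formula for $v_m$) and, together with its relevant derivatives, converges pointwise to zero on $\Omega'$ while being supported in a tube of width $m^{-1/2}$, terms (b) and (c) go to zero by dominated convergence for each fixed $v\in H^1(\Omega)$. This dominated-convergence step is precisely what supplies the $v$-dependent $o(1)$ without ever requiring $v|_\Gamma\in H^1(\Gamma)$ or a uniform-in-$m$ continuity bound. To salvage your approach you would either need to prove a uniform-in-$m$ bound for $R_m$ on $H^{1/2}(\Gamma)$ (which is essentially the theorem itself), or replace the pure trace-space argument with the paper's $H^1(\R^3)$-extension plus dominated convergence.
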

\begin{proof}
By Proposition \ref{lem:ext}, the lower bound follows.
By the extension theorem for Sobolev functions (see for instance \cite[Section 5.4.]{evans1998partial}), there exist a constant $C>0$ and, for all $v\in H^1$, a function $Ev\in H^1(\R^3)$ that extends $v$ and such that $\norm{Ev}_{H^1(\R^3)}\leq C\norm{v}_{H^1(\Omega)}$. 
Let us define the test function $u_m$ by $u_m = v \widetilde{u}_m$ where
\[
	\widetilde{u}_m(\Phi(s,t)) = \begin{cases}
		&v_{m,\kappa(s),K(s)}(mt)\mbox{ for all } (s,t)\in\Gamma\times[0,{m}^{-1/2}]\,,\\
		&0\mbox{ for all } (s,t)\in\Gamma\times[{m}^{-1/2},+\infty)\,.
	\end{cases}
\]
Here, the function $v_m$ is defined in \eqref{eq:formaltestfun1}.
With an integration by parts, Lemmas \ref{lem.ipp1D}, \ref{lem.EL1D} and Proposition \ref{lem:ext}, we get
\[\begin{split}
	\mathcal{Q}_m(u_m) 
	&=
	\norm{\widetilde{u}_m\nabla v + v\nabla \widetilde{u}_m}_{L^2(\Omega')}^2 +m^2 \norm{v \widetilde{u}_m}_{L^2(\Omega')}^2
	\\
	&=
	\norm{\widetilde{u}_m\nabla v}_{L^2(\Omega')}^2
	+\norm{ v\nabla \widetilde{u}_m}_{L^2(\Omega')}^2 
	+2\Re\braket{\widetilde{u}_m\nabla v,v\nabla \widetilde{u}_m}_{\Omega'}
	+m^2 \norm{v \widetilde{u}_m}_{L^2(\Omega')}^2
	\\
	&=
	\norm{\widetilde{u}_m\nabla v}_{L^2(\Omega')}^2
	+\braket{\widetilde{u}_mv,v\left(-\Delta+m^2\right) \widetilde{u}_m}_{\Omega'}
	-\braket{v\pa_\n \widetilde{u}_m, v\widetilde{u}_m}_{\Gamma}	
	\\
	&\leq
	\norm{\widetilde{u}_m\nabla v}_{L^2(\Omega')}^2
	+\braket{\widetilde{u}_mv,v\left(-\Delta+m^2\right) \widetilde{u}_m}_{\Omega'}
	\\&+m\norm{v}_{L^2(\Gamma)}^2 + \int_\Gamma|v|^2\kappa/2\dx\Gamma + C\norm{v}_{L^2(\Gamma)}^2/m\,.		
\end{split}\]
Since $\widetilde{u}_m$ is uniformly bounded in $W^{2,\infty}(\Omega)$ and pointwise converges to $0$ with its derivatives in the tangential direction, Lebesgue's dominated convergence theorem ensures that $\norm{\widetilde{u}_m\nabla v}_{L^2(\Omega')}^2$ and $\braket{\widetilde{u}_mv,v\left(-\Delta+m^2\right) \widetilde{u}_m}_{\Omega'}$ tend to $0$ as $m$ goes to $+\infty$. We obtain that
\[
	\limsup_{m\to+\infty}\left(\Lambda_{m,m^{-1/2}}-m\norm{v}_{L^2(\Gamma)}^2 - \int_\Gamma|v|^2\kappa/2\dx\Gamma \right)\leq 0\,.
\]
 \end{proof}

With Proposition \ref{lem:agmon1}, this proves in particular \eqref{item.i} in Proposition \ref{prop:ext}. This section is devoted to the refinement of this lower bound and to the corresponding upper bound.

\subsubsection{Preliminary lemmas}
Let us state a few elementary lemmas that we will use later.

\begin{lemma}\label{lem:Sob.gamma0}
There exists $C>0$ such that, for all $f,g\in H^{\frac{3}{2}}(\Gamma)$, we have
\[\|fg\|_{H^{\frac{3}{2}}(\Gamma)}\leq C\|f\|_{H^{\frac{3}{2}}(\Gamma)}\|g\|_{H^{\frac{3}{2}}(\Gamma)}\,.\]
\end{lemma}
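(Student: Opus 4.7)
The plan is to reduce the inequality to the classical fact that $H^{s}(\R^{n})$ is a Banach algebra whenever $s>n/2$, applied with $n=\dim\Gamma=2$ and $s=3/2$, which is strictly greater than $1$.

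First, I would fix a finite atlas $\{(U_{i},\phi_{i})\}_{i=1}^{N}$ of the compact surface $\Gamma$, together with a smooth partition of unity of the form $\{\chi_{i}^{2}\}$ subordinate to $\{U_{i}\}$. By the intrinsic definition of Sobolev spaces on a smooth compact manifold, one has a norm equivalence
\[
\|u\|_{H^{3/2}(\Gamma)}^{2}\ \sim\ \sum_{i=1}^{N}\bigl\|(\chi_{i}u)\circ\phi_{i}^{-1}\bigr\|_{H^{3/2}(\R^{2})}^{2}.
\]
Since $\chi_{i}^{2}fg=(\chi_{i}f)(\chi_{i}g)$, controlling $\|fg\|_{H^{3/2}(\Gamma)}$ reduces to bounding, for each $i$, the product $(\chi_{i}f)\circ\phi_{i}^{-1}\cdot(\chi_{i}g)\circ\phi_{i}^{-1}$ in $H^{3/2}(\R^{2})$ by the product of its factors. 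So the whole statement reduces to the Euclidean estimate
\[
\|FG\|_{H^{3/2}(\R^{2})}\ \leq\ C\,\|F\|_{H^{3/2}(\R^{2})}\,\|G\|_{H^{3/2}(\R^{2})}\qquad\forall F,G\in H^{3/2}(\R^{2}).
\]

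Second, I would prove the Euclidean bound via the Fourier characterization $\|u\|_{H^{3/2}(\R^{2})}^{2}=\int_{\R^{2}}\langle\xi\rangle^{3}|\widehat{u}(\xi)|^{2}\dx\xi$, where $\langle\xi\rangle:=(1+|\xi|^{2})^{1/2}$. Using $\widehat{FG}=(2\pi)^{-1}\widehat{F}\ast\widehat{G}$ together with Peetre's inequality
\[
\langle\xi\rangle^{3/2}\leq C\bigl(\langle\xi-\eta\rangle^{3/2}+\langle\eta\rangle^{3/2}\bigr),
\]
I obtain pointwise
\[
\langle\xi\rangle^{3/2}|\widehat{FG}(\xi)|\leq C\Bigl[\bigl(\langle\cdot\rangle^{3/2}|\widehat F|\bigr)\ast|\widehat G|\,(\xi)+|\widehat F|\ast\bigl(\langle\cdot\rangle^{3/2}|\widehat G|\bigr)(\xi)\Bigr].
\]
Taking $L^{2}_{\xi}$-norms and applying Young's inequality $\|h_{1}\ast h_{2}\|_{L^{2}}\leq\|h_{1}\|_{L^{2}}\|h_{2}\|_{L^{1}}$, it remains to control $\|\widehat F\|_{L^{1}}$ and $\|\widehat G\|_{L^{1}}$ by $H^{3/2}$-norms. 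This follows from Cauchy--Schwarz:
\[
\int_{\R^{2}}|\widehat G(\eta)|\dx\eta\ \leq\ \Bigl(\int_{\R^{2}}\langle\eta\rangle^{-3}\dx\eta\Bigr)^{1/2}\|G\|_{H^{3/2}(\R^{2})},
\]
where the weight integral is finite \emph{precisely because} $3>2=\dim\R^{2}$, i.e.\ $s>n/2$.

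The argument is essentially the textbook proof of the algebra property, with no real obstacle. The only delicate points are: (i) choosing the partition $\{\chi_{i}^{2}\}$ so that the identity $\chi_{i}^{2}fg=(\chi_{i}f)(\chi_{i}g)$ localizes the product cleanly; and (ii) checking that $3/2$ is strictly above the critical exponent $\dim(\Gamma)/2=1$, which is exactly what makes $\int\langle\eta\rangle^{-3}\dx\eta$ converge in two dimensions. No further regularity of $\Gamma$ beyond the smoothness already assumed is needed.
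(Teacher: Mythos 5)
Your argument is correct and is simply the standard, fully spelled-out proof of the classical fact that the paper invokes in a single line, namely that $H^{s}(\Gamma)$ is a Banach algebra whenever $s>\tfrac{1}{2}\dim\Gamma$. The paper's proof just cites this with $s=3/2$ and $\dim\Gamma=2$; your chart-and-partition reduction followed by the Peetre/Young/Cauchy--Schwarz computation on $\R^{2}$ is exactly the textbook derivation of that fact, so the two approaches are the same.
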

\begin{proof}
$H^{\frac{3}{2}}(\Gamma)$ is an algebra since $\frac{3}{2}>\frac{\dim\Gamma}{2}=1$.
\end{proof}

\begin{lemma}\label{lem:Sob.gamma1}
There exists $C>0$ such that, for all $f\in H^{\frac{3}{2}}(\Gamma)$, we have
\[\|f\|_{H^{\frac{1}{2}}(\Gamma)}\leq C \|f\|^{\frac{1}{2}}_{L^2(\Gamma)}\|f\|^{\frac{1}{2}}_{H^1(\Gamma)}\,.\]
\end{lemma}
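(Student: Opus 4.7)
\begin{proof-sketch}
This is a standard interpolation inequality, and I would prove it via the spectral decomposition of the Laplace-Beltrami operator on $\Gamma$. Since $\Gamma$ is a smooth compact Riemannian surface (without boundary), the operator $-\Delta_g$ on $\Gamma$ is self-adjoint with compact resolvent, and admits an $L^2(\Gamma)$-orthonormal basis $(e_j)_{j\in\N}$ of eigenfunctions associated with eigenvalues $0\leq\lambda_0\leq\lambda_1\leq\cdots\to+\infty$.

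The plan is as follows. For $s\geq 0$ and $f\in H^s(\Gamma)$, writing $c_j=\langle e_j,f\rangle_{L^2(\Gamma)}$, the Sobolev norm $\|f\|_{H^s(\Gamma)}$ is equivalent to $\bigl(\sum_j (1+\lambda_j)^s|c_j|^2\bigr)^{1/2}$; this is the standard spectral characterization of Sobolev spaces on a compact manifold (see e.g. Taylor, \emph{Partial Differential Equations I}). Applying Cauchy--Schwarz on the sum defining $\|f\|_{H^{1/2}(\Gamma)}^2$, one gets
\[
\sum_{j} (1+\lambda_j)^{1/2}|c_j|^2
=\sum_{j}|c_j|\cdot (1+\lambda_j)^{1/2}|c_j|
\leq \Bigl(\sum_j |c_j|^2\Bigr)^{1/2}\Bigl(\sum_j (1+\lambda_j)|c_j|^2\Bigr)^{1/2},
\]
which yields $\|f\|_{H^{1/2}(\Gamma)}^2 \leq C\|f\|_{L^2(\Gamma)}\|f\|_{H^1(\Gamma)}$, and the desired inequality follows by taking square roots.

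Note that the hypothesis $f\in H^{3/2}(\Gamma)$ in the statement is only there to guarantee that all quantities are finite; the estimate itself only requires $f\in H^1(\Gamma)$, and the spectral argument extends by density from finite linear combinations of eigenfunctions. There is no real obstacle; the one point to be careful about is the equivalence between the intrinsic definition of $H^s(\Gamma)$ (via charts) and the spectral definition, but this is classical for smooth compact manifolds. Alternatively, one could invoke directly the general interpolation theorem $[L^2,H^1]_{1/2,2}=H^{1/2}$ and the fact that interpolation inequalities take the form $\|f\|_{[X_0,X_1]_{\theta,2}}\leq C\|f\|_{X_0}^{1-\theta}\|f\|_{X_1}^{\theta}$.
\end{proof-sketch}
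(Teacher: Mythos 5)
Your argument is correct, and in fact the paper itself states this lemma without proof, treating it as a classical interpolation inequality; so there is no ``paper's proof'' to compare against. Your spectral route is the standard one: using that $-\Delta_g$ on the compact closed surface $\Gamma$ has discrete spectrum, that $\|f\|_{H^s(\Gamma)}^2 \simeq \sum_j (1+\lambda_j)^s|c_j|^2$, and then applying Cauchy--Schwarz to the $H^{1/2}$ sum gives exactly $\|f\|_{H^{1/2}}^2 \lesssim \|f\|_{L^2}\|f\|_{H^1}$. The alternative you mention --- citing the real-interpolation identity $[L^2(\Gamma),H^1(\Gamma)]_{1/2,2}=H^{1/2}(\Gamma)$ together with the general interpolation inequality $\|f\|_{[X_0,X_1]_{\theta,2}}\lesssim\|f\|_{X_0}^{1-\theta}\|f\|_{X_1}^{\theta}$ --- is equally valid and is probably what the authors had in mind in leaving the lemma unproved. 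Your remark that the $H^{3/2}$ hypothesis is not needed (only $f\in H^1(\Gamma)$) is also accurate; it is imposed in the paper merely for uniformity with the neighboring Lemmas \ref{lem:Sob.gamma0} and \ref{lem:Sob.gamma2}.
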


\begin{lemma}\label{lem:Sob.gamma2}
There exists $C>0$ such that, for all $f\in H^{\frac{1}{2}}(\Gamma,T\Gamma)$ and $g\in H^{1}(\Gamma,\C)$, we have
\[\left|\int_{\Gamma}f\cdot\nabla_{s} g\dx\Gamma\right|\leq C\|f\|_{H^{\frac{1}{2}}(\Gamma)}\|g\|_{H^{\frac{1}{2}}(\Gamma)}\,.\]
\end{lemma}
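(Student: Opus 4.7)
The plan is to exploit the duality between $H^{1/2}(\Gamma)$ and $H^{-1/2}(\Gamma)$, combined with the fact that any first order differential operator maps $H^s(\Gamma)$ continuously into $H^{s-1}(\Gamma)$ on the compact Riemannian manifold $(\Gamma,g)$. The key identity to start from is the surface integration by parts: for smooth tangent fields $f$ and smooth functions $g$, since $\Gamma$ is closed (no boundary), one has
\begin{equation*}
\int_\Gamma f\cdot\nabla_s g\,\dx\Gamma = -\int_\Gamma (\dive_s f)\,g\,\dx\Gamma,
\end{equation*}
where $\dive_s$ denotes the tangential divergence on $(\Gamma,g)$. The idea is that this identity will allow me to transfer the derivative from $g$ onto $f$ and then pair the resulting $H^{-1/2}$ object with $g\in H^{1/2}$.

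Next I would establish, as the main analytic input, that $\dive_s$ extends to a bounded operator $H^{1/2}(\Gamma, T\Gamma)\to H^{-1/2}(\Gamma)$. This is the standard mapping property of a first order differential operator on a compact manifold and can be proved by a partition of unity and local trivialization of $T\Gamma$, reducing the statement to the elementary fact that $\partial_j : H^{1/2}(\R^2)\to H^{-1/2}(\R^2)$ is bounded, which is immediate from the Fourier characterization $\|u\|_{H^s}^2=\int(1+|\xi|^2)^s|\hat u|^2\dx\xi$. Combined with smoothness of the coefficients appearing in $\dive_s$ (and $H^{3/2}$-type multiplication bounds in the spirit of Lemma \ref{lem:Sob.gamma0}), this gives $\|\dive_s f\|_{H^{-1/2}(\Gamma)}\le C\|f\|_{H^{1/2}(\Gamma)}$.

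With both ingredients in hand, a density argument finishes the proof: smooth tangent fields are dense in $H^{1/2}(\Gamma, T\Gamma)$ and smooth scalar functions are dense in $H^1(\Gamma)\subset H^{1/2}(\Gamma)$, so the integration by parts formula extends to the duality pairing $\langle -\dive_s f, g\rangle_{H^{-1/2}(\Gamma),H^{1/2}(\Gamma)}$, and the continuity of that pairing yields
\begin{equation*}
\left|\int_\Gamma f\cdot\nabla_s g\,\dx\Gamma\right|\le \|\dive_s f\|_{H^{-1/2}(\Gamma)}\,\|g\|_{H^{1/2}(\Gamma)}\le C\|f\|_{H^{1/2}(\Gamma)}\|g\|_{H^{1/2}(\Gamma)}.
\end{equation*}
The one step that requires a little care, rather than being a one-line invocation, is the $H^{1/2}\to H^{-1/2}$ mapping property of $\dive_s$: one must check that the coefficient fields entering $\dive_s$ in local charts are multipliers on $H^{-1/2}(\R^2)$, which is where the smoothness of $\Gamma$ (assumed throughout the paper) is used. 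Everything else is routine.
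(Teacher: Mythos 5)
The paper itself offers no proof of Lemma~\ref{lem:Sob.gamma2}: like Lemma~\ref{lem:Sob.gamma1}, it is stated without argument, being regarded as a standard fact about fractional Sobolev spaces on the compact surface $\Gamma$, so there is no ``paper proof'' to compare against. Your argument is correct and is the natural one. The chain (surface integration by parts on the closed manifold $\Gamma$, the $H^{1/2}\to H^{-1/2}$ mapping property of the first-order operator $\dive_s$, the $H^{-1/2}$--$H^{1/2}$ duality, and density) closes without gaps, and the continuity of $(f,g)\mapsto\int_\Gamma f\cdot\nabla_s g$ on $H^{1/2}\times H^1$ justifies the passage to the limit. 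Two minor remarks. First, the integration by parts step, while harmless, is not actually needed: one can keep the derivative on $g$ and use directly that $\nabla_s:H^{1/2}(\Gamma)\to H^{-1/2}(\Gamma,T\Gamma)$ is bounded, then pair $f\in H^{1/2}$ with $\nabla_s g\in H^{-1/2}$ by duality; this saves the appeal to $\dive_s$ and to density of smooth vector fields. Second, invoking $H^{3/2}$-algebra bounds as in Lemma~\ref{lem:Sob.gamma0} for the coefficient control is an overshoot: all that is required is that $C^\infty$ coefficients are multipliers on $H^{\pm 1/2}(\mathbb{R}^2)$, an elementary fact, so the mention of $H^{3/2}$ multiplication is unnecessary (though not incorrect).
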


\subsubsection{Lower and upper bounds}
\begin{notation}\label{not.vkK}
In the following, we define
\[\begin{split}
	\widehat{\Pi}_m : \,&H^{1}(\Omega, \C^4) \longrightarrow \widehat{V}_m
	\\
	&v \longmapsto [(s,\tau)\in \widehat{\mathcal V}_{m}\mapsto  v(s)u_{m,\kappa(s),K(s)}(\tau)\in \C^4]
\end{split}\]
where $u_{m,\kappa(s),K(s)}$ is defined by Proposition \ref{lem:ext} with $\kappa = \kappa(s) $ and $K = K(s)$.
\end{notation}
\begin{lemma}\label{lem.deriv-u}
We have, uniformly in $s$, 
\[\int_0^{\sqrt{m}}|\nabla_s u_{m,\kappa(\cdot),K(\cdot)}|^2\dx\tau=\mathscr{O}(m^{-2})\,.\]
\end{lemma}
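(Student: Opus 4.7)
The plan is to notice that the dependence of $u_{m,\kappa(s),K(s)}(\tau)$ on $s\in\Gamma$ passes entirely through the scalar parameters $\kappa(s)$ and $K(s)$; since $\partial\Omega$ is smooth and compact, $\nabla_s\kappa$ and $\nabla_s K$ are uniformly bounded on $\Gamma$. Granting that $(\kappa,K)\mapsto u_{m,\kappa,K}$ is differentiable (discussed at the end), the chain rule gives
\begin{equation*}
\nabla_s u_{m,\kappa(\cdot),K(\cdot)}(\tau) = (\pa_\kappa u_{m,\kappa,K})(\tau)\,\nabla_s\kappa(s) + (\pa_K u_{m,\kappa,K})(\tau)\,\nabla_s K(s),
\end{equation*}
so the lemma reduces to proving, uniformly for $(\kappa,K)\in[-A,A]\times[-B,B]$,
\begin{equation*}
\|\pa_\kappa u_{m,\kappa,K}\|_{L^2(0,\sqrt m)} + \|\pa_K u_{m,\kappa,K}\|_{L^2(0,\sqrt m)} = \mathscr{O}(m^{-1}).
\end{equation*}

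To obtain this, I would differentiate the Euler-Lagrange equation $\widehat{\mathcal L}_{m,\kappa,K}u_{m,\kappa,K}=0$ of Lemma \ref{lem.EL1D} in the parameter $\kappa$. Setting $w:=\pa_\kappa u_{m,\kappa,K}$, the explicit formula for $\widehat{\mathcal L}_{m,\kappa,K}$ in Lemma \ref{lem.ipp1D} gives
\begin{equation*}
\widehat{\mathcal L}_{m,\kappa,K} w = -(\pa_\kappa \widehat{\mathcal L}_{m,\kappa,K}) u_{m,\kappa,K} = \phi_m(\tau)\,\pa_\tau u_{m,\kappa,K},
\end{equation*}
where $\phi_m$ comes from the $\kappa$-derivative of the first-order coefficient of $\widehat{\mathcal L}_{m,\kappa,K}$. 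A short direct computation, exploiting that $a_{m,\kappa,K}\geq 1/2$ (Remark \ref{rem:m0}) and that $m^{-2}K\tau^2=\mathscr{O}(m^{-1})$ on $(0,\sqrt m)$, yields $\|\phi_m\|_{L^\infty(0,\sqrt m)}=\mathscr{O}(m^{-1})$ uniformly in $(\kappa,K)$. Moreover, since $u_{m,\kappa,K}(0)=1$ and $u_{m,\kappa,K}(\sqrt m)=0$ for every $(\kappa,K)$, we have $w(0)=w(\sqrt m)=0$.

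The variational estimate then closes the argument. Applying Lemma \ref{lem.ipp1D} with $u=v=w$, the boundary term vanishes and Cauchy-Schwarz gives
\begin{equation*}
\widehat{\mathscr Q}_{m,\kappa,K}(w) = \int_0^{\sqrt m}\langle \phi_m\,\pa_\tau u_{m,\kappa,K},w\rangle a_{m,\kappa,K}\dx\tau \leq \|\phi_m\|_\infty\,\|\pa_\tau u_{m,\kappa,K}\|_{*}\,\|w\|_{*},
\end{equation*}
where $\|\cdot\|_*$ denotes the norm of $L^2((0,\sqrt m), a_{m,\kappa,K}\dx\tau)$. The middle factor is bounded by $\widehat{\mathscr Q}_{m,\kappa,K}(u_{m,\kappa,K})^{1/2}=\Lambda_{m,\kappa,K}^{1/2}=\mathscr{O}(1)$ by Proposition \ref{lem:ext}, and the last by $\widehat{\mathscr Q}_{m,\kappa,K}(w)^{1/2}$. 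Dividing through yields $\widehat{\mathscr Q}_{m,\kappa,K}(w)^{1/2}\leq Cm^{-1}$, hence $\|w\|_{L^2(0,\sqrt m)}\leq 2\|w\|_{*}=\mathscr{O}(m^{-1})$. The same procedure applied to $\pa_K u_{m,\kappa,K}$ gives a bound of the same order (in fact slightly better, since $\pa_K$ of the first-order coefficient of $\widehat{\mathcal L}_{m,\kappa,K}$ is $\mathscr{O}(m^{-3/2})$ on $(0,\sqrt m)$). Combining these estimates with the chain-rule identity and the uniform control of $|\nabla_s\kappa|$ and $|\nabla_s K|$ on $\Gamma$ delivers the claim.

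The main technical point to be verified is the smooth dependence of $u_{m,\kappa,K}$ on $(\kappa,K)$, which is however standard: the linear, coercive two-point boundary value problem defining $u_{m,\kappa,K}$ has coefficients depending smoothly on the parameters, so standard ODE theory yields smoothness of the solution. Alternatively, one can bypass parameter-differentiability altogether by applying the variational identity of Lemma \ref{lem.EL1D} to $u_{m,\kappa_2,K_2}-u_{m,\kappa_1,K_1}$, estimating it by the same scheme as above, and then passing to the finite-difference limit.
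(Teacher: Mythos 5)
Your proof is correct and takes essentially the same approach as the paper's: differentiate the Euler--Lagrange equation $\widehat{\mathcal{L}}_{m,\kappa,K}u_{m,\kappa,K}=0$ in the parameter, test against the resulting derivative (whose value at $\tau=0$ vanishes because $u_{m,\kappa,K}(0)=1$ is constant), and close with Cauchy--Schwarz together with $\|\pa_\tau u_{m,\kappa,K}\|_{L^2(a_{m,\kappa,K}\dx\tau)}\le\Lambda_{m,\kappa,K}^{1/2}=\mathscr{O}(1)$. The only cosmetic difference is that you make the chain rule through $\kappa(s)$ and $K(s)$ explicit before estimating, whereas the paper works directly with $\nabla_s$ and the commutator $[\nabla_s,a^{-1}_{m,\kappa,K}\pa_\tau a_{m,\kappa,K}\pa_\tau]$; unwinding the latter recovers exactly your $\phi_m$ with the same $\mathscr{O}(m^{-1})$ bound.
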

\begin{proof}
We have
\[\left(- a^{-1}_{m,\kappa,K}\partial_\tau a_{m,\kappa,K}\partial_\tau
+1\right)u_{m,\kappa, K}=0\,.\]
Let us take the derivative with respect to $s$:
\[\left(- a^{-1}_{m,\kappa,K}\partial_\tau a_{m,\kappa,K}\partial_\tau
+1\right)\nabla_{s} u_{m,\kappa, K}=\left[\nabla_{s},a^{-1}_{m,\kappa,K}\partial_\tau a_{m,\kappa,K}\partial_\tau\right]u_{m,\kappa, K}\,.\]
Taking the scalar product with $\nabla_{s} u_{m,\kappa, K}$ and integrating by parts by noticing that $\nabla_{s} u_{m,\kappa, K}(0)=0$, we get
\begin{multline*}
\int_{0}^{\sqrt{m}} |\partial_{\tau} \nabla_{s}u_{m,\kappa, K}|^2a_{m,\kappa, K}\dx\tau+\|\nabla_{s} u_{m,\kappa, K}\|^2_{L^2(a_{m,\kappa,K}\dx\tau)}\\
\leq \left|\left\langle\left[\nabla_{s},a^{-1}_{m,\kappa,K}\partial_\tau a_{m,\kappa,K}\partial_\tau\right]u_{m,\kappa, K},\nabla_{s} u_{m,\kappa, K}\right\rangle_{L^2(a_{m,\kappa, K}\dx\tau)}\right|\,.
\end{multline*}
By an explicit computation and the Cauchy-Schwarz inequality, we find
\begin{multline*}
\left|\left\langle\left[\nabla_{s},a^{-1}_{m,\kappa,K}\partial_\tau a_{m,\kappa,K}\partial_\tau\right]u_{m,\kappa, K},\nabla_{s} u_{m,\kappa, K}\right\rangle_{L^2(a_{m,\kappa, K}\dx\tau)}\right|\\
\leq Cm^{-1}\|\partial_{\tau}u_{m,\kappa,K}\|_{L^2(a_{m,\kappa, K}\dx\tau)}\|\nabla_{s}u_{m,\kappa, K}\|_{L^2(a_{m,\kappa, K}\dx\tau)}\,.
\end{multline*}
Since
\[\|\partial_{\tau}u_{m,\kappa,K}\|_{L^2(a_{m,\kappa, K}\dx\tau)}\leq \sqrt{\Lambda_{m,\kappa, K}}\,,\]
we get
\[\int_{0}^{\sqrt{m}} |\partial_{\tau} \nabla_{s}u_{m,\kappa, K}|^2a_{m,\kappa, K}\dx\tau+\|\nabla_{s} u_{m,\kappa, K}\|^2_{L^2(a_{m,\kappa,K}\dx\tau)}\leq Cm^{-2}\,.\]
\end{proof}

\begin{proposition}\label{lem:effop}
	There exist positive constants $C>0$ and $m_1>0$ such that for all $m\geq m_1$, and all $v\in H^{2}(\Omega)$,
	\[
		\big|\Lambda_{m,m^{-1/2}}(v)-\widetilde \Lambda_{m}(v)\Big|\leq Cm^{-3/2}\|v\|^2_{H^{3/2}(\Gamma)}\,,
	\]
	where 
	\[
		\widetilde \Lambda_{m}(v) = m\int_\Gamma  
			|v|^2
		\dx\Gamma
		+
		\int_\Gamma  
			\frac{\kappa}{2}|v|^2
		\dx\Gamma
		+
		m^{-1}\int_\Gamma \left(
			\frac{|\nabla_s v|^2}{2}
			+
			\left(\frac{K}{2}-\frac{\kappa^2}{8}\right)|v|^2
		\right)\dx\Gamma.
	\]
	More precisely, for all $u\in \widehat{V}_m$ such that $u = v$ on $\Gamma$,
\begin{multline*}
		\widehat{\mathscr{Q}}_m(u)\geq \widetilde \Lambda_{m}(v)
		-\frac{C}{m^{3/2}} \|v\|_{H^{3/2}(\Gamma)} ^2
		+\frac{m}{2}
		\|u-\widehat{\Pi}_m v\|^2_{L^2(\widehat{\mathcal{V}}_m, \dx\Gamma\dx\tau)}
		\\
		\qquad\qquad
		+\frac{1}{2m}
		\|\nabla_s \left(u - \widehat{\Pi}_m v\right)\|^2_{L^2(\widehat{\mathcal{V}}_m, \dx\Gamma\dx\tau)}\,,
	\end{multline*}
	and
	\[\begin{split}
		\widehat{\mathscr{Q}}_m(\widehat{\Pi}_m(v))\leq \widetilde \Lambda_{m}(v)
		+Cm^{-3/2} \left(\|v\|^2_{L^2(\Gamma)} + \|\nabla_s v\|^2_{L^2(\Gamma)}\right).
	\end{split}\]
\end{proposition}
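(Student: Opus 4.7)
The plan is to establish the second displayed inequality (an upper bound on $\widehat{\mathscr{Q}}_m(\widehat{\Pi}_m v)$) and the first displayed inequality (a lower bound on $\widehat{\mathscr{Q}}_m(u)$); the estimate $|\Lambda_{m,m^{-1/2}}(v)-\widetilde\Lambda_m(v)|\leq Cm^{-3/2}\|v\|^2_{H^{3/2}(\Gamma)}$ then follows by taking $u = \widehat{\Pi}_m v$ in the infimum defining $\Lambda_{m,m^{-1/2}}$ (upper half) and by taking the infimum over admissible $u$ in the first displayed inequality, where the two $\|w\|$--terms are non-negative (lower half).

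First I would compute $\widehat{\mathscr{Q}}_m(\widehat{\Pi}_m v)$ directly via the decomposition in \eqref{eq:dom-hat}. Fiberwise the transverse plus mass contributions combine into $m|v(s)|^2\widehat{\mathscr{Q}}_{m,\kappa(s),K(s)}(u_{m,\kappa(s),K(s)}) = m|v(s)|^2\Lambda_{m,\kappa(s),K(s)}$ by Lemma \ref{lem.EL1D}. Integrating over $\Gamma$ and invoking Proposition \ref{lem:ext} produces the first three summands of $\widetilde\Lambda_m(v)$ modulo $O(m^{-2})\|v\|^2_{L^2(\Gamma)}$. For the tangential part I would expand $\nabla_s\widehat{\Pi}_m v = (\nabla_s v)u_{m,\kappa,K} + v\,\nabla_s u_{m,\kappa,K}$, and use $\widehat a_m = 1 + O(m^{-1/2})$ and $\widehat g_m^{-1} = g^{-1} + O(m^{-1/2})$ on $\tau\in(0,\sqrt m)$, the normalization $\int_0^{\sqrt m}|u_{m,\kappa,K}|^2\widehat a_m\dx\tau = \tfrac{1}{2}+O(m^{-1})$ from Proposition \ref{lem:ext}, together with $\int_0^{\sqrt m}|\nabla_s u_{m,\kappa,K}|^2\dx\tau = O(m^{-2})$ from Lemma \ref{lem.deriv-u}. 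The leading contribution is $\tfrac{1}{2m}\int_\Gamma|\nabla_s v|^2\dx\Gamma$ with remainder $O(m^{-3/2})(\|v\|^2_{L^2(\Gamma)}+\|\nabla_s v\|^2_{L^2(\Gamma)})$, yielding the upper bound and, symmetrically, the matching two-sided estimate $\widehat{\mathscr{Q}}_m(\widehat{\Pi}_m v) = \widetilde\Lambda_m(v) + O(m^{-3/2})\|v\|^2_{H^1(\Gamma)}$.

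For the first displayed inequality I would set $w := u - \widehat{\Pi}_m v$, so $w|_\Gamma = 0$ and $w(\cdot,\sqrt m) = 0$, and expand
$$
\widehat{\mathscr{Q}}_m(u) = \widehat{\mathscr{Q}}_m(\widehat{\Pi}_m v) + \widehat{\mathscr{Q}}_m(w) + 2\Re\mathcal{B}_m(\widehat{\Pi}_m v, w),
$$
where $\mathcal{B}_m$ polarizes $\widehat{\mathscr{Q}}_m$. Applying Lemma \ref{lem.ipp1D} fiberwise in $\tau$ reduces the transverse plus mass part of $\mathcal B_m(\widehat{\Pi}_m v, w)$ to the pairing of $\widehat{\mathcal L}_{m,\kappa(s),K(s)}u_{m,\kappa(s),K(s)}$ with $w(s,\cdot)$ (multiplied by $v(s)$), which vanishes by Lemma \ref{lem.EL1D}, while the associated boundary term at $\tau=0$ disappears because $w(s,0) = 0$. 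Only the tangential cross term $\tfrac{2}{m}\Re\int_\Gamma\int_0^{\sqrt m}\langle\nabla_s\widehat{\Pi}_m v,\widehat g_m^{-1}\nabla_s w\rangle\widehat a_m\dx\tau\dx\Gamma$ survives. Combined with $\widehat{\mathscr{Q}}_m(w)\geq\tfrac{m}{2}\|w\|^2_{L^2(\dx\Gamma\dx\tau)}+\tfrac{1-Cm^{-1/2}}{2m}\|\nabla_s w\|^2_{L^2(\dx\Gamma\dx\tau)}$ (from $\widehat a_m\geq 1/2$ and $\widehat g_m^{-1}\geq(1-Cm^{-1/2})\mathrm{Id}$) and the lower bound on $\widehat{\mathscr{Q}}_m(\widehat{\Pi}_m v)$ from the previous step, the required lower bound follows provided this residual cross term can be absorbed with an $O(m^{-3/2})\|v\|^2_{H^{3/2}(\Gamma)}$ remainder.

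This absorption is the main obstacle: a plain Cauchy--Schwarz yields only $O(m^{-1})\|\nabla_s v\|^2_{L^2(\Gamma)}$, which is too coarse. The sharper rate is achieved by splitting $\nabla_s\widehat{\Pi}_m v$ into its two pieces. The $v\,\nabla_s u_{m,\kappa,K}$ piece is $L^2$-small by Lemma \ref{lem.deriv-u}, and Cauchy--Schwarz with the Sobolev embedding $H^{3/2}(\Gamma)\hookrightarrow L^\infty(\Gamma)$ supplied by Lemma \ref{lem:Sob.gamma0} bounds its contribution by $O(m^{-2})\|v\|_{H^{3/2}(\Gamma)}\|\nabla_s w\|_{L^2}$, absorbable into $\tfrac{1}{2m}\|\nabla_s w\|^2$ with remainder $O(m^{-3})\|v\|^2_{H^{3/2}(\Gamma)}$. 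The $(\nabla_s v)u_{m,\kappa,K}$ piece is treated by integrating by parts in $s$ on the closed surface $\Gamma$, transferring one tangential derivative onto the slowly-varying factor $u_{m,\kappa,K}\widehat a_m$, and then invoking the $H^{3/2}$--$H^{1/2}$ duality of Lemmas \ref{lem:Sob.gamma1}--\ref{lem:Sob.gamma2} to estimate the resulting pairing against $w$. A weighted Cauchy--Schwarz absorbs the $w$-factor into $\tfrac{m}{2}\|w\|^2_{L^2}$, leaving the announced $Cm^{-3/2}\|v\|^2_{H^{3/2}(\Gamma)}$ remainder.
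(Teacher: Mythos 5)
Your proposal is correct and follows essentially the same route as the paper. Both arguments split $\widehat{\mathscr{Q}}_m$ into the transverse/mass part and the weighted tangential part, handle the transverse part \emph{exactly} via the fiberwise Euler--Lagrange identity of Lemma~\ref{lem.EL1D} (so no transverse cross term appears, whether you phrase this by polarization as you do or by writing $\widehat{\mathscr{Q}}_{m,\kappa,K}(u)=|v|^2\Lambda_{m,\kappa,K}+\widehat{\mathscr{Q}}_{m,\kappa,K}(u-\widehat{\Pi}_m v)$ as the paper does), and then control the surviving tangential cross term using the $H^{3/2}$--$H^{1/2}$ machinery of Lemmas~\ref{lem:Sob.gamma0}--\ref{lem:Sob.gamma2} together with the interpolation inequality to absorb into $\tfrac{m}{2}\|w\|^2_{L^2}+\tfrac{1}{2m}\|\nabla_s w\|^2_{L^2}$.

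The one cosmetic difference: you split $\nabla_s\widehat{\Pi}_m v=(\nabla_s v)\,u_{m,\kappa,K}+v\,\nabla_s u_{m,\kappa,K}$ and treat the two pieces separately (the second is killed by Lemma~\ref{lem.deriv-u}, the first by an explicit integration by parts on $\Gamma$ plus duality), whereas the paper applies Lemma~\ref{lem:Sob.gamma2} wholesale to $f=\nabla_s\widehat{\Pi}_m v$ and $g=u-\widehat{\Pi}_m v$, relying on the algebra property (Lemma~\ref{lem:Sob.gamma0}) to control $\|\widehat{\Pi}_m v\|_{H^{3/2}(\Gamma)}$ directly. Your "integrate by parts and pair in duality" is exactly the mechanism hidden inside Lemma~\ref{lem:Sob.gamma2}, so this is a re-derivation rather than a genuine departure. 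One small bookkeeping remark: after the duality/interpolation step the tangential cross term actually contributes $O(m^{-2})\|v\|_{H^{3/2}}^2$; the final $O(m^{-3/2})$ remainder in the statement is driven by the $(1+O(m^{-1/2}))$ weight errors in $\widehat a_m$, $\widehat g_m^{-1}$ multiplying the $O(m^{-1})$-sized tangential energy $\tfrac{1}{2m}\int_\Gamma|\nabla_s v|^2$, not by the cross term itself — the paper's proof makes this explicit.
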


\begin{proof}
	Let $v\in H^{2}(\Omega)$. 
	
	First, let us discuss the upper bound. For that purpose, we insert $\widehat{\Pi}_m v$ in the quadratic form:
	\begin{equation*}
	\widehat{\mathscr{Q}}_{m} (\widehat{\Pi}_m v )= 
		m\int_\Gamma  
			\widehat{\mathscr{Q}}_{m,\kappa(\cdot),K(\cdot)}(\widehat{\Pi}_m v )\dx
		\Gamma
		+m^{-1}\int_{\widehat{\mathcal V}_{m}}\langle\nabla_{s} \widehat{\Pi}_m v ,\widehat g^{-1}_m\nabla_{s} \widehat{\Pi}_m v \rangle\widehat a_m \dx \Gamma \dx\tau\,.
\end{equation*}
We have
\[
m\int_\Gamma  \widehat{\mathscr{Q}}_{m,\kappa(\cdot),K(\cdot)}(\widehat{\Pi}_m v )\dx\Gamma
= m\int_\Gamma  |v|^2\Lambda_{m,\kappa(\cdot),K(\cdot)}\dx\Gamma\,,
\]
and
\[\int_{\widehat{\mathcal V}_{m}}\langle\nabla_{s} \widehat{\Pi}_m v ,\widehat g^{-1}_m\nabla_{s} \widehat{\Pi}_m v \rangle\widehat a_m \dx \Gamma \dx\tau
\leq (1+Cm^{-\frac{1}{2}})\int_{\widehat{\mathcal V}_{m}}|\nabla_{s} \widehat{\Pi}_m v|^2 \dx \Gamma \dx\tau\,.
\]
Moreover, for all $\varepsilon>0$,
\begin{multline*}
\int_{\widehat{\mathcal V}_{m}}|\nabla_{s} \widehat{\Pi}_m v|^2 \dx \Gamma \dx\tau\leq(1+\varepsilon)\int_\Gamma
			|\nabla_s v|^2
			\int_0^{\sqrt{m}}
				|u_{m,\kappa(\cdot),K(\cdot)}|^2
			\dx\tau
		\dx\Gamma\\	
	+(1+\varepsilon^{-1})\int_{\Gamma}\|v\|^2\int_0^{\sqrt{m}}|\nabla_{s} u_{m,\kappa(\cdot),K(\cdot)}|^2\dx\tau\dx\Gamma\,.
\end{multline*}
We recall Lemma \ref{lem.deriv-u}. We choose $\varepsilon=m^{-1}$ and recall Proposition \ref{lem:ext} to get
\[\int_{\widehat{\mathcal V}_{m}}|\nabla_{s} \widehat{\Pi}_m v|^2 \dx \Gamma \dx\tau\leq (1+Cm^{-1})\frac{1}{2}\int_{\Gamma}\|\nabla_{s} v\|^2\dx\Gamma+Cm^{-1}\|v\|^2_{L^2(\Gamma)}\,.\]
Therefore,  
\begin{multline*}
\widehat{\mathscr{Q}}_{m} (\widehat{\Pi}_m v )\leq m\int_\Gamma  |v|^2\Lambda_{m,\kappa(\cdot),K(\cdot)}\dx\Gamma+m^{-1}\frac{1+Cm^{-\frac{1}{2}}}{2}\int_{\Gamma}|\nabla_{s} v|^2\dx\Gamma+Cm^{-2}\|v\|^2_{L^2(\Gamma)}\,.
\end{multline*}
It remains to use Proposition \ref{lem:ext} to get the desired upper bound.
Let us now discuss the lower bound. Let $u\in \widehat{V}_m$ such that $u = v \mbox{ on }\Gamma$.
By Lemma \ref{lem.EL1D}, we have\
\[\begin{split}
		\widehat{\mathscr{Q}}_m(u) &= m\int_\Gamma \widehat{\mathscr{Q}}_{m,\kappa(\cdot),K(\cdot)}(u)\dx \Gamma 
		+m^{-1}\int_{\widehat{\mathcal V}_{m}}\langle\nabla_{s} u,\widehat g^{-1}_m\nabla_{s} u\rangle\widehat a_m \dx \Gamma \dx\tau
		\\
		&
		= m\int_\Gamma |v|^2\Lambda_{m,\kappa(\cdot),K(\cdot)}\dx \Gamma
		+m\int_\Gamma \widehat{\mathscr{Q}}_{m,\kappa(\cdot),K(\cdot)}(u-\widehat{\Pi}_m v)\dx \Gamma
		\\
		&
		\qquad+m^{-1}\int_{\widehat{\mathcal V}_{m}}\langle\nabla_{s} u,\widehat g^{-1}_m\nabla_{s} u\rangle\widehat a_m \dx \Gamma \dx\tau\,.
\end{split}\]
Thus,
\begin{multline*}
\widehat{\mathscr{Q}}_m(u)\geq  m\int_\Gamma |v|^2\Lambda_{m,\kappa(\cdot),K(\cdot)}\dx \Gamma+m\left(1-Cm^{-\frac{1}{2}}\right)\|u-\widehat{\Pi}_mv\|^2_{L^2(\widehat{\mathcal{V}}_m, \dx\Gamma\dx\tau)} \\
+m^{-1}\left(1 - Cm^{-\frac{1}{2}}\right)\int_{\widehat{\mathcal V}_{m}}|\nabla_s u|^2 \dx \Gamma \dx\tau\,.
\end{multline*}
\Bk
	We have
	\begin{multline}\label{eq:lemboundext2}
		\int_{\widehat{\mathcal V}_{m}}|\nabla_s u|^2 \dx \Gamma \dx\tau
		=
		\int_{\widehat{\mathcal V}_{m}}|\nabla_s \widehat{\Pi}_mv|^2 \dx \Gamma \dx\tau
		+
		\int_{\widehat{\mathcal V}_{m}}\Big|\nabla_s \left(u - \widehat{\Pi}_mv\right)\Big|^2 \dx \Gamma \dx\tau
		\\
		+ 2\Re \int_{\widehat{\mathcal V}_{m}}\Big\langle\nabla_s \widehat{\Pi}_mv, \nabla_s \left(u - \widehat{\Pi}_mv\right)\Big\rangle \dx \Gamma \dx\tau.
	\end{multline}
	By Lemmas \ref{lem:Sob.gamma2} and \ref{lem:Sob.gamma0},
	\begin{equation*}
		 \left|2\Re \int_{\widehat{\mathcal V}_{m}}\Big\langle\nabla_s \widehat{\Pi}_mv, \nabla_s \left(u - \widehat{\Pi}_mv\right)\Big\rangle \dx \Gamma \dx\tau\right|
		 \leq C
		\|v\|_{H^{3/2}(\Gamma)} 
		 \left\|u - \widehat{\Pi}_mv\right\|_{H^{1/2}(\widehat{\mathcal{V}}_m, \dx\Gamma\dx\tau)}\,.
	\end{equation*}	
	Then, with Lemma \ref{lem:Sob.gamma1}, we get, for all $\varepsilon_0>0$,
	\begin{multline}\label{eq:lemboundext3}
 \Big|2\Re \int_{\widehat{\mathcal V}_{m}}\Big\langle\nabla_s \widehat{\Pi}_mv, \nabla_s \left(u - \widehat{\Pi}_mv\right)\Big\rangle \dx \Gamma \dx\tau\Big|\\
 \leq Cm^{-1}\varepsilon_0^{-1}\|v\|_{H^{3/2}(\Gamma)} ^2
		 + m^{2}\varepsilon_0(1+m^{-2})\left\|u - \widehat{\Pi}_mv\right\|^2_{L^2(\widehat{\mathcal{V}}_m, \dx\Gamma\dx\tau)}
		 +\varepsilon_0\left\|\nabla_s\left(u - \widehat{\Pi}_mv\right)\right\|^2_{L^2(\widehat{\mathcal{V}}_m, \dx\Gamma\dx\tau)}\,.
\end{multline}	
Using \eqref{eq:lemboundext2} and \eqref{eq:lemboundext3}, we get that
	\[\begin{split}
		&
		\widehat{\mathscr{Q}}_m(u)\geq
		m\int_\Gamma  
			|v|^2
		\dx\Gamma
		+
		\int_\Gamma  
			\frac{\kappa}{2}|v|^2
		\dx\Gamma
		+
		m^{-1}\int_\Gamma \left(
			\frac{|\nabla_s v|^2}{2}
			+
			\left(\frac{K}{2}-\frac{\kappa^2}{8}\right)|v|^2
		\right)\dx\Gamma
		\\&
		-\frac{C}{m^{3/2}}(\varepsilon_0^{-1}+1) \|v\|_{H^{3/2}(\Gamma)} ^2
		+m\left(1-\varepsilon_0-\frac{C}{m^{1/2}}\right)
		\|u-\widehat{\Pi}_mv\|^2_{L^2(\widehat{\mathcal{V}}_m, \dx\Gamma\dx\tau)}
		\\
		&
		+m^{-1}\left(1-\varepsilon_0-\frac{C}{m^{1/2}}\right)
		\|\nabla_s \left(u - \widehat{\Pi}_mv\right)\|^2_{L^2(\widehat{\mathcal{V}}_m, \dx\Gamma\dx\tau)}.
	\end{split}\]
	Taking $\varepsilon_0 = 3/4$ and $m$ large enough, we get the result.
	\end{proof}

\subsection{End of the proof of Proposition \ref{prop:ext}}
Item \eqref{item.ii} of Proposition \ref{prop:ext} follows from Propositions \ref{lem:effop} and \ref{lem:agmon1}. It remains to prove \eqref{item.iii}. Consider the minimizer $u_{m}$ and a cut off function $\chi_{m}$ supported in a neighborhood of size $m^{-\frac{1}{2}}$ near the boundary. Then, we let 
\[\check u_{m}(s,\tau)= (\chi_{m} u_{m})\circ \Phi(s,m^{-1}\tau)\,.\]
Let us use the lower bound in Proposition \ref{lem:effop}:
\[
\widehat{\mathscr{Q}}_m(\check u_{m})\geq \widetilde \Lambda_{m}(v)
		+\frac{m}{2}
		\|\check u_{m}-\widehat{\Pi}_m v\|^2_{L^2(\widehat{\mathcal{V}}_m, \dx\Gamma\dx\tau)}-\frac{C}{m^{3/2}} \|v\|_{H^{3/2}(\Gamma)} ^2\,.
\]
As in the proof of Lemma \ref{lem.IMS} and recalling Item \eqref{pt:cp2} in Lemma \ref{lem:ELE}, we get
\[\widehat{\mathscr{Q}}_m(\check u_{m})=\mathcal{Q}_{m, m^{-\frac{1}{2}}}(\chi_{m} u_{m})=\widetilde \Lambda_{m}(v)+\|(\nabla \chi_{m}) u_{m}\|^2=(1+\mathscr{O}(e^{-cm^{\frac{1}{2}}}))\widetilde \Lambda_{m}(v)\,,\]
where we used \eqref{eq.agmon1a}.

We deduce that
\[\|\check u_{m}-\widehat{\Pi}_m v\|^2_{L^2(\widehat{\mathcal{V}}_m, \dx\Gamma\dx\tau)}\leq \frac{C}{m^{5/2}} \|v\|_{H^{3/2}(\Gamma)} ^2\,.\]
Thus
\[\left|\|\check u_{m}\|_{L^2(\widehat{\mathcal{V}}_m, \dx\Gamma\dx\tau)}-\|\widehat{\Pi}_m v\|_{L^2(\widehat{\mathcal{V}}_m, \dx\Gamma\dx\tau)}\right|\leq \frac{C}{m^{5/4}} \|v\|_{H^{3/2}(\Gamma)}\,.\]
Using Proposition \ref{lem:ext}, we get that
\[\left|\|\widehat{\Pi}_m v\|^2_{L^2(\widehat{\mathcal{V}}_m, \dx\Gamma\dx\tau)}-\frac{\|v\|^2_{L^2(\Gamma)}}{2}\right|\leq Cm^{-1}\|v\|^2_{L^2(\Gamma)}\,.\]
Therefore
\[\left|m\|\chi_{m} u_{m}\|^2_{L^2(\mathcal{V}_m, \dx\x)}-\frac{\|v\|^2_{L^2(\Gamma)}}{2}\right|\leq Cm^{-1} \|v\|^2_{H^{3/2}(\Gamma)}\,.\]
We remove $\chi_{m}$ by using \eqref{eq.agmon1a} and Item \eqref{item.iii} follows.

\section{A vectorial Laplacian with Robin-type boundary conditions}\label{sec.3}
In this section, we study the vectorial Laplacian $L_m^{\rm int}$ associated with the quadratic form $\mathcal{Q}_m^{\rm int}$ defined in section \ref{sec.robintro}.

\subsection{Preliminaries :  proof of Lemma \ref{lem:sarobint}}
We recall that the domain of $L_m^{\rm int}$ is the set of the $u\in H^1(\Omega; \C^4)$ such that the linear application
\[ H^1(\Omega; \C^4)\ni v\mapsto \mathcal{Q}_m^{\rm int}(v,u)\in\C\]
is continuous for the $L^2$-norm. By using the Green-Riemann formula, we get that the domain is given by
\[\{u\in H^1(\Omega; \C^4) : -\Delta u\in L^2(\Omega; \C^4)\,,\quad (\pa_\n + \kappa/2+m_0 + 2m\Xi^{-})u=0 \mbox{ on } \Gamma\}\,.\]
By a classical regularity theorem, we deduce that the domain is included in $H^2(\Omega;\C^4)$.
The compactness of the resolvent and the discreteness of the spectrum immediately follow.
\subsection{Asymptotics of the eigenvalues}
In this section, we describe the first terms in the asymptotic expansion of the eigenvalues of $L_m^{\rm int}$. This is the aim of the following proposition.
\begin{proposition}\label{lem:asym_eigen_int}
	The following properties hold.
	\begin{enumerate}[\rm (i)]
		\newcounter{saveenum}
		\item \label{pt:conv_int_asym1} For any $k\in \N^*$, $\lim_{m\to +\infty} \lambda_{k,m}^{\rm int} = \lambda_k^2$.
		\setcounter{saveenum}{\value{enumi}}
	\end{enumerate}
	Let $\lambda$ be an eigenvalue of $|H^\Omega|$ of multiplicity $k_1\in \N^*$. Consider $k_0\in \N$ such that for all $k\in\{1,\dots,k_1\}$, $\lambda_{k_0+k} = \lambda$.
	\begin{enumerate}[\rm (i)]
		\setcounter{enumi}{\value{saveenum}}
		\item\label{pt:conv_int_asym2} For all $k\in\{1,2,\dots, k_1\}$, we have
		\[
			\lambda_{k_0+k,m}^{\rm int} = \lambda^2 +\frac{\mu_{\lambda,k}}{m} + o\left(\frac{1}{m}\right)
		\]
		where
	\begin{equation}
			\mu_{\lambda,k} := 
				\inf_{\tiny
			\begin{array}{c}
				V\subset \ker(|H^\Omega|-\lambda),\\
				\dim V = k,
			\end{array}
			}
			\sup_{\tiny
			\begin{array}{c}
				v\in V,\\
				\norm{v}_{L^2(\Omega)}=1,
			\end{array}
			}
			-\frac{
				\norm{(\pa_\n+\kappa/2+m_0)v}_{L^2(\Gamma)}^2
				}{2}\,.
		\end{equation}
		\item	\label{pt:conv_int_asym3}Let $(u_{k_0+1},\dots,u_{k_0+k_1})$ be a $H^1$-weak limit of a sequence \[(u_{k_0+1,m},\dots,u_{k_0+k_1,m})_{m>0}\] of $L^2$-orthonormal eigenvectors of $L^{\rm int}_m$ associated with the eigenvalues \[(\lambda_{k_0+1,m}^{\rm int},\dots,\lambda_{k_0+k_1,m}^{\rm int})\,.\] 
		Then, we have for all $v\in\ker(|H^\Omega|-\lambda)$ that,
		\[-\frac{1}{2}\norm{(\pa_\n+\kappa/2+m_0)v}_{L^2(\Gamma)}^2=\sum_{k=1}^{k_1}|\braket{ v, u_{k_0+k}}_\Omega|^2\mu_{\lambda,k}\,.\]
	\end{enumerate}

	Here, $(\lambda_k)_{k\in \N^*}$ is defined in Notation \ref{not.lambda} and $(\lambda_{k,m}^{\rm int})_{k\in \N^*}$ in Notation \ref{not.lambdaint}.
\end{proposition}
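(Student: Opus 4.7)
The plan is to handle the three parts in sequence, combining the min-max principle for the eigenvalues of $L_m^{\rm int}$ with quantitative perturbative constructions based on Lemma \ref{lem:quadf1}.

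For part (i), I would apply min-max from both sides. The upper bound $\lambda_{k,m}^{\rm int}\le\lambda_k^2$ follows by taking as test subspace the span of the first $k$ eigenfunctions of $|H^\Omega|$: these lie in $\mathsf{Dom}(H^\Omega)$ and hence satisfy $\Xi^-\varphi|_\Gamma=0$, so the penalty $2m\|\Xi^-\varphi\|_{L^2(\Gamma)}^2$ in $\mathcal{Q}_m^{\rm int}$ vanishes and comparing \eqref{eq:quadf2} with \eqref{eq:quadMIT} gives $\mathcal{Q}_m^{\rm int}(\varphi)=\|H^\Omega\varphi\|_{L^2(\Omega)}^2$. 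The lower bound uses $L^2$-orthonormal eigenvectors $(u_{j,m})_{j=1}^k$ of $L_m^{\rm int}$: the estimate $\mathcal{Q}_m^{\rm int}(u_{j,m})=\lambda_{j,m}^{\rm int}\le\lambda_k^2$ forces $H^1$-boundedness and $\|\Xi^-u_{j,m}\|_{L^2(\Gamma)}=\mathcal{O}(m^{-1/2})$. Extracting weakly $H^1$-convergent subsequences and using compactness of the embeddings $H^1(\Omega)\hookrightarrow L^2(\Omega)$ and of the trace into $L^2(\Gamma)$, the limits $(u_j)$ are $L^2$-orthonormal, satisfy $\Xi^-u_j|_\Gamma=0$, and therefore lie in $\mathsf{Dom}(H^\Omega)$; lower semicontinuity of the Dirichlet energy together with continuity of the boundary integrals gives $\|H^\Omega u_j\|^2\le\liminf\lambda_{j,m}^{\rm int}$, and min-max for $H^\Omega$ on $\mathrm{span}(u_1,\ldots,u_k)$ closes the bound.

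For part (ii), I would perform a perturbative analysis around $\ker(|H^\Omega|-\lambda)$. Fix an $L^2$-orthonormal basis $(v_j)_{j=1}^{k_1}$ of $\ker(|H^\Omega|-\lambda)$ diagonalizing the Hermitian form $q(v):=-\|(\pa_\n+\kappa/2+m_0)v\|_{L^2(\Gamma)}^2/2$ with eigenvalues $\mu_{\lambda,j}$ in increasing order. For the upper bound, construct test functions $\tilde v_{j,m}=v_j+w_{j,m}$ where $w_{j,m}$ is a boundary-layer corrector of size $\mathcal{O}(1/m)$ supported in a tubular neighborhood of $\Gamma$, designed to compensate the mismatch between $v_j$ and the Robin-type boundary condition of $L_m^{\rm int}$. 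The key identity, obtained by writing $\mathcal{Q}_m^{\rm int}(v_j+w)=\mathcal{Q}_m^{\rm int}(v_j)+2\Re\mathcal{Q}_m^{\rm int}(v_j,w)+\mathcal{Q}_m^{\rm int}(w)$ and using $(-\Delta+m_0^2)v_j=\lambda^2 v_j$, $\Xi^-v_j|_\Gamma=0$ and integration by parts, reduces the cross term to $2\Re\int_\Gamma\langle(\pa_\n+\kappa/2+m_0)v_j,w\rangle$, which must be balanced against the penalty $2m\|\Xi^-w\|_{L^2(\Gamma)}^2$ to extract the correction $\mu_{\lambda,j}/m$. Inserting $\mathrm{span}(\varphi_1,\ldots,\varphi_{k_0},\tilde v_{1,m},\ldots,\tilde v_{k,m})$ into the min-max, where $\varphi_i$ are eigenvectors of $L_m^{\rm int}$ for $i\le k_0$ with Rayleigh quotients separated from $\lambda^2$ by part (i), yields $\lambda_{k_0+k,m}^{\rm int}\le\lambda^2+\mu_{\lambda,k}/m+o(1/m)$.

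The matching lower bound together with part (iii) is obtained by reversing the perspective. I would take $L^2$-orthonormal eigenvectors $(u_{k_0+k,m})_{k=1}^{k_1}$ of $L_m^{\rm int}$ for the cluster of eigenvalues near $\lambda^2$, extract weak $H^1$-limits $(u_{k_0+k})$ which, by part (i) and dimension counting, form an $L^2$-orthonormal basis of $\ker(|H^\Omega|-\lambda)$. Writing $u_{k_0+k,m}=\sum_j c_{j,k}(m)v_j+\tilde w_{k,m}$ with $\tilde w_{k,m}$ orthogonal to $\ker(|H^\Omega|-\lambda)$, and performing an asymptotic expansion of $\lambda_{k_0+k,m}^{\rm int}=\mathcal{Q}_m^{\rm int}(u_{k_0+k,m})$ analogous to the upper-bound computation but applied to the actual eigenvector, one shows that $m(\lambda_{k_0+k,m}^{\rm int}-\lambda^2)\to q(u_{k_0+k})$. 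Combined with the upper bound and the min-max characterization of $\mu_{\lambda,k}$ as the $k$-th eigenvalue of $q$ on $\ker(|H^\Omega|-\lambda)$, this forces $q(u_{k_0+k})=\mu_{\lambda,k}$, simultaneously yielding the lower bound of part (ii), the identity of part (iii), and the fact that $(u_{k_0+k})$ is an eigenbasis of $q$.

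The main obstacle is the construction and evaluation of the corrector $w_{j,m}$ in part (ii), where the exact coefficient $\mu_{\lambda,j}$ (not merely its order of magnitude) must emerge from the test-function analysis. The structure is markedly different from Section \ref{sec.2}: here the large parameter $2m$ acts only on the $\Xi^-$ trace and not in the bulk, so the corrector must be structured asymmetrically in the $\Xi^+$ and $\Xi^-$ components, and extracting the precise quadratic form $q$ from the balance between the cross term and the boundary penalty requires tubular coordinate expansions respecting the curvatures $\kappa$ and $K$ together with the algebraic identities satisfied by MIT bag eigenvectors at the boundary (in particular $(\alpha\cdot\n)v=i\beta v$).
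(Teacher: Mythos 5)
Your argument for part (i) is essentially the paper's: the upper bound $\lambda_{k,m}^{\rm int}\leq\lambda_k^2$ from the domain inclusion $\D(H^\Omega)\subset\D(\mathcal{Q}_m^{\rm int})$, and the lower bound by extracting weakly $H^1$-convergent orthonormal eigenfamilies, using compactness of the trace to force $\Xi^-u_j|_\Gamma=0$ in the limit, and closing with lower semicontinuity; this is exactly the induction of Lemma~\ref{lem:induc1} in the paper.

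For parts (ii) and (iii) your route is genuinely different from the paper's, and considerably more laborious. You propose a test-function approach: build a boundary-layer corrector $w_{j,m}$, balance the cross term $2\Re\int_\Gamma\langle(\pa_\n+\kappa/2+m_0)v_j,w\rangle$ against the penalty $2m\|\Xi^-w\|_{L^2(\Gamma)}^2$, and in the lower bound expand $\mathcal{Q}_m^{\rm int}(u_{k_0+k,m})$ after decomposing the eigenvector relative to $\ker(|H^\Omega|-\lambda)$. This is workable (the optimal choice $\Xi^-w|_\Gamma=-\tfrac{1}{2m}\Xi^-(\pa_\n+\kappa/2+m_0)v_j$ does produce the coefficient $-\tfrac12\|(\pa_\n+\kappa/2+m_0)v_j\|^2_{L^2(\Gamma)}$, and a smooth extension at amplitude $O(1/m)$ makes the remaining terms $o(1/m)$), but the paper bypasses the entire construction with Lemma~\ref{lem:asympt_intlem1}: for a $\lambda$-eigenfunction $u$ of $|H^\Omega|$ and an eigenfunction $u_{k,m}$ of $L_m^{\rm int}$, two applications of Green's formula combined with the explicit Robin boundary conditions of both operators give the exact (not asymptotic) identity
\begin{equation*}
m(\lambda_{k,m}^{\rm int}-\lambda^2)\braket{u_{k,m},u}_{\Omega} = -\tfrac12\braket{(\pa_\n+\kappa/2+m_0)u_{k,m},(\pa_\n+\kappa/2+m_0)u}_\Gamma\,,
\end{equation*}
and parts (ii)--(iii) then fall out by passing to the limit along the strongly convergent subsequence from part (i), using the $H^2$ bound of Lemma~\ref{lem:regu} for convergence of the boundary terms. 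This identity buys you the exact coefficient without any corrector, without Rayleigh-quotient expansions, and without the tubular-coordinate machinery you anticipate in your final paragraph; in particular the Gauss curvature $K$ plays no role in $\mu_{\lambda,k}$, and the boundary-layer optimization you flag as the main obstacle simply does not arise. You should also note that with your lower-bound plan you must control the part of $u_{k_0+k,m}$ orthogonal to $\ker(|H^\Omega|-\lambda)$, which needs extra quantitative spectral-gap estimates; the paper's bilinear identity sidesteps this by pairing $u_{k,m}$ directly against a fixed eigenfunction $u$ of $|H^\Omega|$.
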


\subsection{Proof of Proposition \ref{lem:asym_eigen_int}}

	Since $\mathsf{Dom}(H^\Omega)\subset \mathsf{Dom}(\mathcal{Q}_{m}^{\rm int})$, we have
	\begin{equation}\label{eq:upper_bound_op_int}
		\lambda_k^2\geq \lambda_{k,m}^{\rm int}
	\end{equation}
	for all $k\in \N^*$ and all $m>0$.
	
\subsubsection{Lower bounds}
\begin{lemma}\label{lem:induc1}
Let $k\in\N$.
The following properties hold:
	\begin{enumerate}[\rm (i)]
		\item\label{pt:ind_int_asymp1} For all $j\in \{1,2,\dots, k\}$, $\lim_{m\to +\infty} \lambda_{j,m}^{\rm int} = \lambda_j^2$.
		\item\label{pt:ind_int_asymp2} For all subsequence $(m_n)_{n\in \N^*}$ going to $+\infty$ as $n\to+\infty$, all $L^2$-orthonormal family of eigenvectors $(u_{1,m_n}, \dots, u_{k,m_n})$ of $L_{m_n}^{\rm int}$ associated with $(\lambda_{1,m_n}^{\rm int},\dots, \lambda_{k,m_n}^{\rm int})$ such that the sequence $(u_{1,m_n}, \dots, u_{k,m_n})_{n\in \N^*}$ converges weakly in $H^1$, then the sequence $(u_{1,m_n}, \dots, u_{k,m_n})_{n\in \N^*}$ converges strongly in $H^1$ and 
		\begin{equation}\label{eq.limbnd}
			\lim_{n\to+\infty}m_{n}\norm{\Xi^-u_{j,m_n}}_{L^2(\Gamma)}^2 = 0
		\end{equation}
		for all $j\in\{1,\dots,k\}$.
	\end{enumerate} 
\end{lemma}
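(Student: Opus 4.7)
The plan is to argue by induction on $k$. The base case $k=0$ is vacuous. For the induction step, the three ingredients I would exploit are: the a priori upper bound $\lambda_{j,m}^{\rm int}\leq\lambda_j^2$ from \eqref{eq:upper_bound_op_int}, the coercivity of the penalty term $2m\|\Xi^-\cdot\|_{L^2(\Gamma)}^2$ in $\mathcal{Q}_m^{\rm int}$, and the compactness of the embeddings $H^1(\Omega)\hookrightarrow L^2(\Omega)$ and of the trace $H^1(\Omega)\to L^2(\Gamma)$.

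Assuming the result at step $k-1$ and extracting a further subsequence so that $(u_{k,m_n})$ converges weakly in $H^1$ to some $u_k$, I would first read off from $\mathcal{Q}_{m_n}^{\rm int}(u_{k,m_n})=\lambda_{k,m_n}^{\rm int}\leq\lambda_k^2$ a uniform $H^1$ bound, together with the crucial estimate $\|\Xi^-u_{k,m_n}\|_{L^2(\Gamma)}^2=O(m_n^{-1})$. Rellich--Kondrachov and compactness of the trace then promote the convergence to strong in $L^2(\Omega)$ and in $L^2(\Gamma)$, so $u_k$ is $L^2$-orthogonal to $u_1,\dots,u_{k-1}$, has norm $1$, and satisfies $\Xi^-u_k=0$ on $\Gamma$; in particular $u_k\in\mathsf{Dom}(H^\Omega)$. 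Passing to the limit in the weak formulation $\mathcal{Q}_{m_n}^{\rm int}(\varphi,u_{k,m_n})=\lambda_{k,m_n}^{\rm int}\braket{\varphi,u_{k,m_n}}_\Omega$ tested only against $\varphi\in\mathsf{Dom}(H^\Omega)$---for which the dangerous term $2m_n\braket{\Xi^-\varphi,\Xi^-u_{k,m_n}}_\Gamma$ is identically zero---identifies $u_k$ as an eigenvector of $|H^\Omega|^2$ with eigenvalue $\ell_k:=\lim\lambda_{k,m_n}^{\rm int}\leq\lambda_k^2$.

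The main obstacle is excluding $\ell_k<\lambda_k^2$. If this occurred, monotonicity $\ell_k\geq\ell_{k-1}=\lambda_{k-1}^2$ (from the induction hypothesis) combined with $\ell_k\in\sigma(|H^\Omega|^2)$ would force the strict ordering $\lambda_{k-1}<\lambda_k$ and $\ell_k=\lambda_{k-1}^2$. By the inductive hypothesis applied to $j<k$, the limits $u_{j_0-r+1},\dots,u_{j_0}$ form an $L^2$-orthonormal family in $\ker(|H^\Omega|^2-\lambda_{k-1}^2)$ whose cardinality equals the multiplicity $r$ of $\lambda_{k-1}^2$, hence a basis of that eigenspace; since $u_k$ also lies in this eigenspace and is $L^2$-orthogonal to $u_1,\dots,u_{k-1}$, one would deduce $u_k=0$, contradicting $\|u_k\|_{L^2(\Omega)}=1$. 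Therefore $\ell_k=\lambda_k^2$, and since this conclusion does not depend on the extracted subsequence, the full limit in (i) exists.

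For (ii), once $u_k$ is known to be a $|H^\Omega|^2$-eigenvector for $\lambda_k^2$, one has $\mathcal{Q}^{\rm int}(u_k)=\|H^\Omega u_k\|^2_{L^2(\Omega)}=\lambda_k^2$, and the lower semi-continuity identity
\[
\lambda_k^2=\lim_n \mathcal{Q}_{m_n}^{\rm int}(u_{k,m_n})\geq \mathcal{Q}^{\rm int}(u_k)+\liminf_n 2m_n\|\Xi^-u_{k,m_n}\|_{L^2(\Gamma)}^2
\]
(obtained from strong $L^2(\Gamma)$ convergence of the trace and weak lower semi-continuity of $\|\nabla\cdot\|_{L^2(\Omega)}^2$) collapses to equality. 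This yields simultaneously \eqref{eq.limbnd} and the norm convergence $\|\nabla u_{k,m_n}\|_{L^2(\Omega)}\to\|\nabla u_k\|_{L^2(\Omega)}$; combined with the weak $H^1$ convergence this delivers strong $H^1$ convergence and closes the induction.
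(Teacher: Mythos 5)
Your proposal is correct, and it reproduces the paper's overall scaffolding (induction on $k$, the a priori upper bound $\lambda_{j,m}^{\rm int}\le\lambda_j^2$, Rellich--Kondrachov and trace compactness to pass to the limit, and the coercivity of the penalty $2m\|\Xi^-\cdot\|_{L^2(\Gamma)}^2$), but you handle the crucial step---showing $\liminf_n\lambda_{k,m_n}^{\rm int}\ge\lambda_k^2$---by a genuinely different mechanism. The paper never identifies the weak limit $u_k$ as an eigenvector before concluding: it uses weak lower semicontinuity of $\mathcal{Q}_{m_n}^{\rm int}$ (keeping the nonnegative penalty term on the left), which gives $\liminf_n\mathcal{Q}_{m_n}^{\rm int}(u_{k,m_n})\ge\mathcal{Q}^{\rm int}(u_k)$, and then invokes the Rayleigh--Ritz bound $\mathcal{Q}^{\rm int}(u_k)\ge\lambda_k^2$, valid because $u_k$ lies in the form domain $\D(H^\Omega)$ and is $L^2$-orthogonal to the first $k-1$ eigenvectors of $(H^\Omega)^2$. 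You instead pass to the limit in the weak Euler--Lagrange identity tested only against $\varphi\in\D(H^\Omega)$---a neat way to annihilate the singular boundary coupling $2m_n\langle\Xi^-\varphi,\Xi^-u_{k,m_n}\rangle_\Gamma$---which identifies $u_k$ as an honest eigenvector of $(H^\Omega)^2$ with eigenvalue $\ell_k$, and then you exclude $\ell_k<\lambda_k^2$ by a dimension/multiplicity count in $\ker((H^\Omega)^2-\lambda_{k-1}^2)$ (the case $k=1$, where $\ell_{k-1}$ is vacuous, is handled trivially since $\ell_1\in\sigma((H^\Omega)^2)$ and $\sigma((H^\Omega)^2)\subset[\lambda_1^2,\infty)$). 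The paper's route is shorter and more purely variational; yours is slightly longer but obtains the identification of the limits as eigenvectors of $(H^\Omega)^2$ as an intermediate output rather than a byproduct, which is precisely the information the paper later re-derives in Proposition \ref{lem:asym_eigen_int} (\ref{pt:conv_int_asym3}). Your conclusion to item (ii), via the equality case in lower semicontinuity, matches the paper's.

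Two small points of bookkeeping worth tightening. First, the inequality you write with $\liminf$ should really be read by computing $\limsup_n 2m_n\|\Xi^-u_{k,m_n}\|_{L^2(\Gamma)}^2$: since $\lim_n\mathcal{Q}_{m_n}^{\rm int}(u_{k,m_n})=\lambda_k^2=\mathcal{Q}^{\rm int}(u_k)$ and $\liminf_n\|\nabla u_{k,m_n}\|^2\ge\|\nabla u_k\|^2$, one gets $\limsup_n 2m_n\|\Xi^-u_{k,m_n}\|^2\le0$ and, separately, $\lim_n\|\nabla u_{k,m_n}\|^2=\|\nabla u_k\|^2$; only the $\limsup$ version gives the full limit in \eqref{eq.limbnd} rather than just the $\liminf$. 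Second, in the multiplicity count you invoke ``the inductive hypothesis'' to assert that the limits $u_{k-r},\dots,u_{k-1}$ are eigenvectors, but the statement of the lemma does not assert this; what you really use is your own step identifying each $u_j$ ($j<k$) as an eigenvector via the Euler--Lagrange limit, re-applied to the lower indices---so it is fine, but it should be phrased as such rather than attributed to the induction hypothesis.
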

\begin{proof}
Let us prove \eqref{pt:ind_int_asymp1} and \eqref{pt:ind_int_asymp2} by induction on $k\in \N^*$.
\paragraph{Case $k=0$} There is nothing to prove.

\paragraph{Case $k> 0$} Assume that \eqref{pt:ind_int_asymp1} and \eqref{pt:ind_int_asymp2} are valid for some $k\in \N$. 

Let $(u_{1,m}, \dots, u_{k+1,m})$ be an $L^2$-orthonormal family of eigenvectors  of $L_{m}^{\rm int}$ associated with $(\lambda_{1,m}^{\rm int},\dots, \lambda_{k+1,m}^{\rm int})$. By \eqref{eq:upper_bound_op_int} and the trace Theorem \cite[Section 5.5]{evans1998partial}, the sequence $(u_{1,m}, \dots, u_{k+1,m})_{m>0}$ is bounded in $H^1(\Omega; \C^4)^{k+1}$, and 
	\begin{equation}\label{eq:ineq_asymp_eigen_int}
		\lambda^2_{k+1}\geq \limsup_{m\to+\infty}\lambda_{k+1,m}^{\rm int}\geq \liminf_{m\to+\infty}\lambda_{k+1,m}^{\rm int}.
	\end{equation}
	Hence there exists a subsequence $(m_n)_{n\in \N^*}$ going to $+\infty$ as $n\to+\infty$ such that
	\[
		\lim_{n\to+\infty}\lambda_{k+1,m_n}^{\rm int} = \liminf_{m\to+\infty}\lambda_{k+1,m}^{\rm int}
	\]
	and $(u_{1,m_n}, \dots, u_{k+1,m_n})_{n\in \N^*}$ converges weakly in $H^1(\Omega;\C^4)$ to $(u_1,\dots, u_{k+1})$.

	Using the induction assumption, we get that $(u_{1,m_n}, \dots, u_{k,m_n})_{n\in \N^*}$ converges strongly in $H^1(\Omega;\C^4)$ to $(u_1,\dots, u_{k})$, $\lim_{m\to +\infty} \lambda_{j,m}^{\rm int} = \lambda_j^2$ and	
		\[
			\lim_{n\to+\infty}m\norm{\Xi^-u_{j,m_n}}_{L^2(\Gamma)}^2 = 0
		\]
		for all $j\in\{1,\dots,k\}$. 
		By Rellich-Kondrachov Theorem \cite[Section 5.7]{evans1998partial}, the sequence $(u_{k+1,m_n})$ converges strongly in $L^2(\Omega;\C^4)$. This implies that $(u_1, \dots ,u_{k+1})$ is an $L^2$-orthonormal family. In addition, for all $j_1,j_2\in \{1,\dots, k+1\}$, $j_1\ne j_2$, and all $n\in \N^*$,
		\begin{multline*}
			0 = \Re\braket{\nabla u_{j_1,m_n},\nabla u_{j_2,m_n}}_\Omega+m_0^2\Re\braket{u_{j_1,m_n},u_{j_2,m_n}}_\Omega 
			\\
			+ \Re\braket{(\kappa/2+m_0) u_{j_1,m_n}, u_{j_2,m_n}}_\Gamma + 2m_{n}\Re\braket{\Xi^- u_{j_1,m_n}, \Xi^-u_{j_2,m_n}}_\Gamma
		\end{multline*}
		and taking the limit $n\to+\infty$, 
		\[
			0 = \Re\braket{\nabla u_{j_1},\nabla u_{j_2}}_\Omega +m_0^2\Re\braket{u_{j_1},u_{j_2}}_\Omega+ \Re\braket{(\kappa/2 +m_0)u_{j_1}, u_{j_2}}_\Gamma.
		\]	
		Since
		\[
			 \lim_{n\to+\infty} \mathcal{Q}_{m_n}^{\rm int}(u_{j,m_n}) = \lambda^2_j = \mathcal{Q}^{\rm int}(u_{j})
		\]
		for all $j\in\{1,\dots,k\}$, where $\mathcal{Q}^{\rm int}$ is defined in \eqref{eq:quadMIT}, we deduce that the $(u_{j})_{1\leq j\leq k}$ are normalized eigenfunctions associated with $(\lambda^2_{j})_{1\leq j\leq k}$.  By the min-max theorem, we get
		\[
			 \liminf_{n\to+\infty} \mathcal{Q}_{m_n}^{\rm int}(u_{k+1,m_n})\geq \mathcal{Q}^{\rm int}(u_{k+1})
				\geq \lambda^2_{k+1}\,.
		\]
		We deduce that 
		\[\lim_{m\to+\infty}\lambda_{k+1,m}^{\rm int} = \lambda^2_{k+1}\,.\] 
		We also get that
		\[
			\lim_{n\to+\infty}\norm{\nabla u_{k+1,m_n}}_{L^2(\Omega)} = \norm{\nabla u_{k+1}}_{L^2(\Omega)}
		\]
		and the strong convergence follows. Note that $\lim_{m\to+\infty}\lambda_{k+1,m}^{\rm int} = \lambda^2_{k+1}$ implies that the previous arguments are valid for any weakly converging subsequence and Items \eqref{pt:ind_int_asymp1} and \eqref{pt:ind_int_asymp2} follow for $k+1$.
\end{proof}

\subsubsection{A technical lemma} 
The following lemma is essential in the proof of Items \eqref{pt:conv_int_asym2} and \eqref{pt:conv_int_asym3}.
\begin{lemma}\label{lem:asympt_intlem1}
Let $k\in \N^*$ and $m>0$.
		Let $u$ \emph{resp.} $u_{k,m}$ be a $L^2$-normalized eigenfunction of $|H^\Omega|$  \emph{resp.} $L_m^{\rm int}$ associated with the eigenvalues $\lambda$  \emph{resp.} $\lambda_{k,m}^{\rm int}$.  Then
			\begin{equation}\label{eq:iden_asym_int}
			m(\lambda_{k,m}^{\rm int}-\lambda^2)\braket{u_{k,m},u}_{\Omega} = -1/2\braket{(\pa_\n + \kappa/2+m_0)u_{k,m},(\pa_\n + \kappa/2+m_0) u}_\Gamma\,.
		\end{equation}
		\end{lemma}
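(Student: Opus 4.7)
The plan is to compare the two eigenvalue equations by a standard Green identity argument, then exploit the boundary conditions to turn all the boundary terms into the operator $T:=\pa_\n+\kappa/2+m_0$.

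First I would record the information obtained from the two eigenvalue problems. Since $u$ is an $L^2$-normalized eigenfunction of $|H^\Omega|$ and $|H^\Omega|^2$ coincides with $-\Delta+m_0^2$ on $\mathsf{Dom}((H^\Omega)^2)$, one has $(-\Delta+m_0^2)u=\lambda^2 u$ in $\Omega$, together with the two boundary conditions
\[
\Xi^- u=0,\qquad \Xi^+ Tu=0\quad\text{on }\Gamma,
\]
the second of which comes from requiring the boundary term in the integration by parts for $\mathcal Q^{\rm int}$ to vanish against arbitrary test functions satisfying $\Xi^-\psi=0$. Similarly, from Lemma \ref{lem:sarobint} the function $u_{k,m}$ satisfies $(-\Delta+m_0^2)u_{k,m}=\lambda_{k,m}^{\rm int}u_{k,m}$ in $\Omega$ together with
\[
\Xi^+ Tu_{k,m}=0,\qquad \Xi^-(T+2m)u_{k,m}=0,
\]
so that $Tu_{k,m}=\Xi^- Tu_{k,m}=-2m\,\Xi^- u_{k,m}$ on $\Gamma$.

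Next I would apply Green's identity to $(-\Delta+m_0^2)u$ and $(-\Delta+m_0^2)u_{k,m}$. Since both volume operators are the same self-adjoint expression, the bulk contributions cancel and one gets
\[
(\lambda_{k,m}^{\rm int}-\lambda^2)\braket{u_{k,m},u}_\Omega=\braket{u_{k,m},\pa_\n u}_\Gamma-\braket{\pa_\n u_{k,m},u}_\Gamma.
\]
Now I would substitute $\pa_\n=T-(\kappa/2+m_0)$. Since $\kappa/2+m_0$ is a real-valued function, the two zeroth-order boundary contributions cancel, leaving
\[
(\lambda_{k,m}^{\rm int}-\lambda^2)\braket{u_{k,m},u}_\Omega=\braket{u_{k,m},Tu}_\Gamma-\braket{Tu_{k,m},u}_\Gamma.
\]

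Finally I would use the boundary conditions to evaluate both terms. For the second, using $Tu_{k,m}=-2m\,\Xi^- u_{k,m}$ and the selfadjointness of $\Xi^-$,
\[
\braket{Tu_{k,m},u}_\Gamma=-2m\braket{\Xi^- u_{k,m},u}_\Gamma=-2m\braket{\Xi^- u_{k,m},\Xi^- u}_\Gamma=0,
\]
because $\Xi^- u=0$. For the first term, since $\Xi^+ Tu=0$, one has $Tu=\Xi^- Tu$, so
\[
\braket{u_{k,m},Tu}_\Gamma=\braket{\Xi^- u_{k,m},Tu}_\Gamma=-\frac{1}{2m}\braket{Tu_{k,m},Tu}_\Gamma,
\]
where in the last step $\Xi^- u_{k,m}=-\frac{1}{2m}Tu_{k,m}$ was used. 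Multiplying by $m$ yields the announced identity. There is no real obstacle here; the only care needed is keeping track of the antilinearity convention for $\langle\cdot,\cdot\rangle$ and verifying that the $(\kappa/2+m_0)$-terms cancel because this factor is real.
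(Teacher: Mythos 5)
Your proof is correct and follows essentially the same route as the paper: Green's identity between the two eigenvalue equations, cancellation of the real zeroth-order boundary term after replacing $\pa_\n$ by $T=\pa_\n+\kappa/2+m_0$, and then the four boundary relations (including $\Xi^+ Tu=0$, which the paper simply asserts and you briefly justify) to identify the result. The only difference is that you spell out the last two lines of the paper's chain of equalities as two separate terms, which is purely expository.
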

		
		\begin{proof}
		Since
		\[
			\begin{array}{ll}
				\Xi^+ (\pa_\n + \kappa/2+m_0)u = 0,  &\Xi^- u = 0,\\
				\Xi^+ (\pa_\n + \kappa/2+m_0)u_{k,m} = 0,& \Xi^-(\pa_\n + \kappa/2+m_0+2m)u_{k,m} = 0
			\end{array}\mbox{ on }\Gamma\,,
		\]
		an integration by parts gives
		\[\begin{split}
			&(\lambda_{k,m}^{\rm int}-\lambda^2)\braket{u_{k,m},u}_{\Omega} 
			= \braket{(-\Delta+m_0^2) u_{k,m},u}_\Omega - \braket{u_{k,m},(-\Delta+m_0^2) u}_\Omega 
			\\&= -\braket{\pa_\n u_{k,m},u}_\Gamma +\braket{u_{k,m},\pa_\n u}_\Gamma
			\\&= -\braket{(\pa_\n+\kappa/2+m_0) u_{k,m},u}_\Gamma +\braket{u_{k,m},(\pa_\n + \kappa/2+m_0) u}_\Gamma
			\\&= \braket{\Xi^-u_{k,m},\Xi^-(\pa_\n + \kappa/2+m_0) u}_\Gamma
			\\&= -1/2m\braket{\Xi^-(\pa_\n + \kappa/2+m_0)u_{k,m},\Xi^-(\pa_\n + \kappa/2+m_0) u}_\Gamma\,.
		\end{split}\] 
		\end{proof}
	
	\subsubsection{Proof of Items \eqref{pt:conv_int_asym2} and \eqref{pt:conv_int_asym3}}
		Let also $(u_{1,m_n},\dots, u_{k_0+k_1,m_n})_{n\in \N^*}$ be a sequence of $L^2$-orthonormal eigenvectors of $L_{m_n}^{\rm int}$ that converges strongly in $H^1(\Omega; \C^4)^{k_0+k_1}$ to an $L^2$-orthonormal family $(u_1,\dots, u_{k_0+k_1})$ of eigenvectors of $|H^\Omega|$. 
		We have
		\[
			{\rm span}(u_{k_0+1}, \dots, u_{k_0+k_1}) = \ker(|H^\Omega|-\lambda)\,.
		\]
		By \eqref{eq:iden_asym_int}, we have
		for all $v=\sum_{k=1}^{k_{1}} a_{k}u_{k_{0}+k}$,
		\[\begin{split}
			&-1/2\norm{(\pa_\n+\kappa/2+m_0)v}_{L^2(\Gamma)}^2 = \sum_{k,j=1}^{k_1}\overline{a_k}a_j\braket{(\pa_\n+\kappa/2+m_0)u_{k_0+k}, (\pa_\n+\kappa/2+m_0)u_{k_0+j}}_\Gamma
			\\&
			=\lim_{n\to+\infty}\sum_{k,j=1}^{k_1}\overline{a_k}a_j\braket{(\pa_\n+\kappa/2+m_0)u_{k_0+k,m_n}, (\pa_\n+\kappa/2+m_0)u_{k_0+j}}_\Gamma
			\\&
			=\lim_{n\to+\infty}\sum_{k,j=1}^{k_1}\overline{a_k}a_jm_n(\lambda^{\rm int }_{k_0+k,m_n}-\lambda^2)\braket{u_{k_0+k,m_n},u_{k_0+j}}_\Omega
			\\&
			=\lim_{n\to+\infty}\sum_{k,j=1}^{k_1}\overline{a_k}a_jm_n(\lambda^{\rm int}_{k_0+k,m_n}-\lambda^2)\braket{u_{k_0+k},u_{k_0+j}}_\Omega
			\\&
			=\lim_{n\to+\infty}\sum_{k=1}^{k_1}|a_k|^2m_n(\lambda^{\rm int}_{k_0+k,m_n}-\lambda^2)\,.
		\end{split}\]
		We deduce that for all $k\in\{1,\dots,k_1\}$,
		\begin{multline*}
			\lim_{n\to+\infty}m_n(\lambda^{\rm int}_{k_0+k,m_n}-\lambda^2)
			=
			-\frac{
				\norm{(\pa_\n+\kappa/2+m_0)u_{k_0+k}}_{L^2(\Gamma)}^2 
				}{2}
			\\
			= \inf_{\tiny
			\begin{array}{c}
				V\subset \ker(|H^\Omega|-\lambda),\\
				\dim V = k,
			\end{array}
			}
			\sup_{\tiny
			\begin{array}{c}
				v\in V,\\
				\norm{v}_{L^2(\Omega)}=1,
			\end{array}
			}
			-\frac{
				\norm{(\pa_\n+\kappa/2+m_0)v}_{L^2(\Gamma)}^2
				}{2} = \mu_{\lambda,k}\,,
		\end{multline*}
		so that
		\[
			\lim_{m\to+\infty}m(\lambda^{\rm int}_{k_0+k,m}-\lambda^2) = \mu_{\lambda_,k}\,.
		\]
		The conclusion follows.

\section{Proof of the main theorem}\label{sec.4}
\subsection{First term in the asymptotic}
In this part, we work in the energy space without using any regularity result as Lemma \ref{lem:regu}. 
\subsubsection{Upper bound}
Let $K\in\N^*$ and $( \varphi_{1},\dots, \varphi_{K})$ be an $L^2$-orthonormal family of eigenvectors of $|H^\Omega|$ associated with the eigenvalues $(\lambda_{1},\dots,\lambda_{K})$.  Using Proposition \ref{prop:ext}, we extend these functions outside $\Omega$ by
\[
	\widetilde u_{j,m} = \begin{cases}
		u_{j}& \mbox{ on } \Omega\,,\\
		u_{m+m_0}(u_{j}) &\mbox{ on } \Omega'\,,
		\end{cases}
\]
for $j\in\{1,\dots,K\}$. By Proposition \ref{prop:ext}, we get that
\[
	\norm{\widetilde u_{j,m}}_{L^2(\Omega')}^2\leq (m+m_0)^{-2}\Lambda_{m+m_0}(u_j)\leq \frac{C}{m+m_0}\,.
\]
so that $\widetilde u_{1,m},\dots,\widetilde u_{K,m}$ are linearly independent vectors.
Let $a_1,\dots,a_K\in\C$. Let us denote $\varphi_{m}^a := \sum_{j=1}^Ka_j\widetilde{u}_{j,m}$.
By Lemma \ref{lem:quadf1} and Proposition \ref{prop:ext}, we have
\[\begin{split}
	&\norm{H_m\varphi_m^a}_{L^2(\R^3)}^2 
	= \norm{\nabla\varphi_m^a}_{L^2(\Omega)}^2 + m_0^2\norm{\varphi_m^a}_{L^2(\Omega)}^2
	-m\Re\langle\mathcal{B}\psi, \psi\rangle_{\Gamma} + \Lambda_{m+m_0}(\varphi_m^a)
	\\
	&\leq
	\mathcal{Q}^{\rm int}\left(\sum_{j=1}^Ka_ju_j\right)+o(1) = \sum_{j=1}^K|a_j|^2\lambda_j^2+o(1)\leq \lambda_K^2 \sum_{j=1}^K|a_j|^2+o(1)\,.
\end{split}\]
We deduce that
\begin{equation}\label{eq:bsfirsttot}
	\limsup_{m\to+\infty}\lambda_{K,m}^2
	\leq \limsup_{m\to+\infty}
		\sup_{\tiny\begin{array}{c}
			\varphi_m^a\in{\rm span}(\widetilde{u}_{1,m},\dots,\widetilde{u}_{K,m})\,
			\\
			\norm{\varphi_m^a}_{L^2(\R^3)}=1\,
			\end{array}
		}\norm{H_m\varphi_m^a}_{L^2(\R^3)}^2\leq \lambda_K^2\,.
\end{equation}
\subsubsection{Lower bound and convergence}
Let $K\in\N^*$ and $( \varphi_{1,m},\dots, \varphi_{K,m})$ be an $L^2$-orthonormal family of eigenvectors of $|H_m|$ associated with the eigenvalues $(\lambda_{1,m},\dots,\lambda_{K,m})$ for all $m\geq m_1$. 
By \eqref{eq:bsfirsttot}, there exists $C>0$ such that
\begin{equation}\label{eq:unifbound}
	C\geq \sup_{\tiny\begin{array}{c}k\in\{1,\dots,K\},\\ m\geq m_1,\end{array}}\norm{H_m  \varphi_{k,m}}_{L^2(\R^3)}^2\,.
\end{equation}
%
%
%
%
%
With \eqref{eq:quadIM} and Proposition \ref{prop:ext}, we get, for all $k\in\{1,\dots,K\}$ and all $m\geq m_1$, that
\begin{equation}\label{eq:firstest}\begin{split}
	&
	\lambda_{k,m}^2 = \|H_{m} \varphi_{k,m}\|_{L^2(\R^3)}^2
	\\
	&
	=
	\norm{\nabla  \varphi_{k,m}}_{L^2(\Omega)}^2 + m_0^2\norm{ \varphi_{k,m}}_{L^2(\Omega)}^2
	-m\braket{\mathcal{B} \varphi_{k,m}, \varphi_{k,m}}_\Gamma
	\\
	&\qquad+
	\Lambda_{m+m_0}( \varphi_{k,m})
	+\mathcal{Q}_{m+m_0}( \varphi_{k,m} - u_{m+m_0}( \varphi_{k,m}))
	\\
	&
	\geq \mathcal{Q}^{\rm int}_m( \varphi_{k,m}) 
	+(m+m_0)^2\norm{ \varphi_{k,m} - u_{m+m_0}( \varphi_{k,m})}_{L^2(\Omega')}^2
	-\frac{C}{m}\norm{ \varphi_{k,m}}_{L^2(\Gamma)}^2.
\end{split}\end{equation}
By the trace theorem, we deduce that there exists $C>0$ such that
\begin{equation}\label{eq:estih1}
	C\geq  \sup_{\tiny\begin{array}{c}k\in\{1,\dots,K\},\\ m\geq m_1,\end{array}} \norm{ \varphi_{k,m}}_{H^1(\Omega)}\,.
\end{equation}
Note also that by \eqref{eq:firstest}, \eqref{eq:estih1} and the trace theorem, we get that 
\begin{equation}\label{eq:estil2tot}
	\left|\norm{ \varphi_{k,m}}_{L^2(\Omega')}-\norm{u_{m+m_0}( \varphi_{k,m})}_{L^2(\Omega')}\right|\leq \norm{ \varphi_{k,m} - u_{m+m_0}( \varphi_{k,m})}_{L^2(\Omega')}\leq C/m\,.
\end{equation}
Moreover, by Proposition \ref{prop:ext}, we obtain that
\[
	\norm{u_{m+m_0}( \varphi_{k,m})}_{L^2(\Omega')}^2
	\leq (m+m_0)^{-2}\Lambda_{m+m_0}( \varphi_{k,m})\leq C(m+m_0)^{-1}\norm{ \varphi_{k,m}}_{H^1(\Omega)}^2\,,
\]
and we deduce that
\begin{equation}\label{eq:estiextnorm}
	\norm{ \varphi_{k,m}}_{L^2(\Omega')}\leq Cm^{-1}.
\end{equation}
Combining \eqref{eq:firstest}, \eqref{eq:estih1}, \eqref{eq:estiextnorm}, Proposition \ref{lem:asym_eigen_int} with an induction procedure as in the proof of Lemma \ref{lem:induc1}, we get the following result.

\begin{lemma}\label{lem:induc2}
Let $K\in\N$.
The following properties hold.
	\begin{enumerate}[\rm (i)]
		\item\label{pt:ind_tot_asymp1} For all $j\in \{1,2,\dots, K\}$, $\lim_{m\to +\infty} \lambda_{j,m} = \lambda_j$.
		\item\label{pt:ind_tot_asymp2} For all subsequence $(m_n)_{n\in \N}$ going to $+\infty$ as $n\to+\infty$, all $L^2$-orthonormal family of eigenvectors $( \varphi_{1,m_n}, \dots,  \varphi_{K,m_n})$ of $|H_m|$ associated with $(\lambda_{1,m_n},\dots, \lambda_{K,m_n})$ such that the sequence $( \varphi_{1,m_n}, \dots,  \varphi_{K,m_n})_{n\in \N}$ converges weakly in $H^1(\Omega)$, then the sequence $( \varphi_{1,m_n}, \dots,  \varphi_{K,m_n})_{n\in \N}$ converges strongly in $H^1(\Omega)$ and 
		\begin{equation}\label{eq.lim_tot_bnd}
			\lim_{n\to+\infty}m_{n}\norm{\Xi^- \varphi_{j,m_n}}_{L^2(\Gamma)}^2 = 0
		\end{equation}
		for all $j\in\{1,\dots,K\}$.
		\item\label{pt:ind_tot_asymp3} Any weak limit $(\varphi_1,\dots,\varphi_K)$ of such a sequence is an $L^2$-orthonormal family of eigenvectors of $|H^\Omega|$ associated with the eigenvalues $(\lambda_1,\dots,\lambda_K)$.
	\end{enumerate} 
\end{lemma}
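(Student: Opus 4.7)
My plan is to proceed by induction on $K$, following the template of Lemma \ref{lem:induc1}. The case $K=0$ is vacuous. For the inductive step I would fix an $L^2$-orthonormal family $(\varphi_{1,m}, \dots, \varphi_{K,m})$ of eigenvectors of $|H_m|$ associated with $(\lambda_{1,m}, \dots, \lambda_{K,m})$. The upper bound \eqref{eq:bsfirsttot} already yields $\limsup_{m} \lambda_{K,m}^2 \leq \lambda_K^2$, while the uniform $H^1(\Omega)$ bound \eqref{eq:estih1} and the exterior decay \eqref{eq:estiextnorm} let me extract a subsequence $m_n \to +\infty$ along which $\lambda_{K,m_n}^2 \to \alpha^2 := \liminf_{m} \lambda_{K,m}^2$ and $(\varphi_{j,m_n})_{j=1}^K$ converges weakly in $H^1(\Omega)$ to some family $(\varphi_1, \dots, \varphi_K)$. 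The goal will then be to show $\alpha^2 = \lambda_K^2$ and to propagate strong $H^1$-convergence together with the $\Xi^-$-estimate from rank $K-1$ to rank $K$.

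The induction hypothesis already delivers strong $H^1(\Omega)$-convergence of the first $K-1$ components to an orthonormal family of eigenvectors of $|H^\Omega|$ with eigenvalues $\lambda_1, \dots, \lambda_{K-1}$. By Rellich--Kondrachov and the compact trace embedding, $\varphi_{K,m_n}$ converges strongly to $\varphi_K$ both in $L^2(\Omega)$ and in $L^2(\Gamma)$. Combined with the $L^2(\R^3)$-orthonormality of the $\varphi_{k,m_n}$ and the exterior estimate \eqref{eq:estiextnorm}, this forces $\|\varphi_K\|_{L^2(\Omega)} = 1$ and $\braket{\varphi_K, \varphi_j}_{L^2(\Omega)} = 0$ for $j<K$. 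From \eqref{eq:firstest} and the trace inequality, the quantity $\mathcal{Q}_{m_n}^{\rm int}(\varphi_{K,m_n})$ is bounded by $\lambda_{K,m_n}^2 + C/m_n$; in particular $2 m_n \|\Xi^-\varphi_{K,m_n}\|_{L^2(\Gamma)}^2$ stays bounded, and the strong $L^2(\Gamma)$-convergence then forces $\Xi^-\varphi_K = 0$, i.e., $\varphi_K \in \mathsf{Dom}(H^\Omega)$.

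Passing to the $\liminf$ in \eqref{eq:firstest}, via weak lower semicontinuity for $\|\nabla \cdot\|_{L^2(\Omega)}^2$, strong convergence for the $L^2(\Gamma)$ terms, and nonnegativity of both the exterior remainder and $2 m_n \|\Xi^-\varphi_{K,m_n}\|_{L^2(\Gamma)}^2$, I would obtain $\alpha^2 \geq \|H^\Omega \varphi_K\|^2$. Since $\varphi_K$ is a normalized element of $\mathsf{Dom}(H^\Omega)$ orthogonal to the orthonormal eigenvectors $\varphi_1, \dots, \varphi_{K-1}$ of $|H^\Omega|$ associated with $\lambda_1, \dots, \lambda_{K-1}$, the spectral theorem for $|H^\Omega|$ yields $\|H^\Omega \varphi_K\|^2 \geq \lambda_K^2$, whence $\alpha^2 = \lambda_K^2$. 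As this holds along every convergent subsequence, $\lim_{m}\lambda_{K,m}^2 = \lambda_K^2$ and (i) is proved. The matching equalities in the limit simultaneously yield $m_n \|\Xi^-\varphi_{K,m_n}\|_{L^2(\Gamma)}^2 \to 0$, $\|\nabla\varphi_{K,m_n}\|_{L^2(\Omega)}^2 \to \|\nabla\varphi_K\|_{L^2(\Omega)}^2$ (hence strong $H^1(\Omega)$-convergence), and $\|H^\Omega \varphi_K\|^2 = \lambda_K^2$, which identifies $\varphi_K$ as an eigenvector of $|H^\Omega|$ for $\lambda_K$; this settles (ii) and (iii).

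The delicate step will be verifying that the weak $H^1$-limit $\varphi_K$ actually lies in $\mathsf{Dom}(H^\Omega)$: the mechanism is precisely the $2m_n$-weighted boundary penalty in $\mathcal{Q}_m^{\rm int}$, whose boundedness (a consequence of \eqref{eq:firstest}, and ultimately of the cross term $-m\Re\braket{\mathcal{B}\psi,\psi}_\Gamma$ in \eqref{eq:quadIM}) is what enforces the MIT bag condition $\Xi^-\varphi_K = 0$ at the limit. Once that point and the careful transfer of $L^2(\R^3)$-orthogonality to $L^2(\Omega)$-orthogonality via \eqref{eq:estiextnorm} are secured, everything else is a standard compactness plus min-max argument patterned on Lemma \ref{lem:induc1}.
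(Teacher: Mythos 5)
Your proposal is correct and follows exactly the route the paper intends: the paper's proof of this lemma is a one-sentence pointer ("Combining \eqref{eq:firstest}, \eqref{eq:estih1}, \eqref{eq:estiextnorm}, Proposition~\ref{lem:asym_eigen_int} with an induction procedure as in the proof of Lemma~\ref{lem:induc1}"), and your argument is precisely that induction, with the three key mechanisms made explicit — the $2m$-weighted $\Xi^-$-penalty forcing the MIT bag condition on the weak limit, the transfer of $L^2(\R^3)$-orthonormality to $L^2(\Omega)$-orthonormality via \eqref{eq:estiextnorm}, and the min-max comparison for $|H^\Omega|$ on the orthogonal complement. The only cosmetic deviation is that you never actually need to invoke Proposition~\ref{lem:asym_eigen_int} itself: once $\Xi^-\varphi_K=0$ is established, the direct min-max bound $\mathcal{Q}^{\rm int}(\varphi_K)\geq\lambda_K^2$ is what closes the argument, exactly as it does inside the proof of Lemma~\ref{lem:induc1}.
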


\subsection{Second term in the asymptotic}
In this section, we will freely use the following regularity result.
\begin{lemma}\label{lem:regu}
	There exists a constant $C>0$ such that for any $m\in \R$, any eigenfunction $u$ of $H_m$ associated with an eigenvalue $\lambda\in \R$, we have 
	\[
		\|u\|_{H^2(\Omega)}\leq C(1+|\lambda|)\|u\|_{L^2(\R^3)}.
	\] 
	We also have, for any eigenfunction $u$ resp. $v$ of $H^\Omega$ resp. $L^{\rm int}_m$ associated with an eigenvalue $\lambda\in \R$, resp. $\lambda^2\in \R$ that
	\[
		\|u\|_{H^2(\Omega)}\leq C(1+|\lambda|)\|u\|_{L^2(\Omega)}
	\] 
	and
	\[
		\|v\|_{H^2(\Omega)}\leq C(1+|\lambda|)\|v\|_{L^2(\Omega)}
	\] 
\end{lemma}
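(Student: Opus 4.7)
\begin{proof-sketch}
All three bounds are instances of $H^2$ elliptic regularity up to the boundary for a Helmholtz-type problem obtained by squaring the first-order operator; the overall plan is to treat the three operators in turn, each time reducing to $(-\Delta+m_0^2)u=\lambda^2u$ equipped with an elliptic boundary or transmission condition on $\Gamma$, and then to invoke standard elliptic regularity.

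The case of $H^\Omega$ is the template. An eigenfunction $u$ automatically belongs to $\mathsf{Dom}((H^\Omega)^2)$ and satisfies $(-\Delta+m_0^2)u=\lambda^2u$ in $\Omega$ with the two boundary conditions $\mathcal{B}u=u$ and $\mathcal{B}H^\Omega u=H^\Omega u$ on $\Gamma$; this pair is elliptic in the Lopatinski\u\i--Shapiro sense, as proved in the authors' earlier paper \cite{MR3663620}. Standard elliptic regularity then gives
\[
\|u\|_{H^2(\Omega)}\leq C\bigl(\|(-\Delta+m_0^2)u\|_{L^2(\Omega)}+\|u\|_{L^2(\Omega)}\bigr),
\]
and chaining this with the first-order Dirac estimate $\|u\|_{H^1(\Omega)}\leq C(1+|\lambda|)\|u\|_{L^2(\Omega)}$ yields the claim. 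The case of $L_m^{\rm int}$ is almost identical: the boundary conditions in $\mathsf{Dom}(L_m^{\rm int})$ are of Robin type and again fulfil the Lopatinski\u\i--Shapiro condition, while the right-hand side is $\lambda^2 v\in L^2$. The only non-routine point is to keep the regularity constant uniform in $m$; I would work in the tubular coordinates of Section \ref{sec.2} and exploit the favourable sign of the $2m\Xi^-$ boundary term to control the tangential commutator estimates by a constant independent of $m$.

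The case of $H_m$ is the main obstacle, since $\mathsf{Dom}(H_m)=H^1(\R^3)$ carries no $H^2(\R^3)$-information because of the jump of $V:=m_0+m\chi_{\Omega'}$ across $\Gamma$. The plan is once more to square: since $H_m\psi=\lambda\psi\in H^1(\R^3)=\mathsf{Dom}(H_m)$, we have $\psi\in\mathsf{Dom}(H_m^2)$, and the anticommutation of the $\alpha_j$'s with $\beta$ yields the distributional identity
\[
(-\Delta+V^2)\psi=\lambda^2\psi\mp m\mathcal{B}\psi|_\Gamma\,\delta_\Gamma
\]
on $\R^3$. Hence $(-\Delta+V^2)\psi=\lambda^2\psi$ pointwise off $\Gamma$, together with the continuity $\psi|_\Gamma^{\rm int}=\psi|_\Gamma^{\rm ext}$ inherited from $\psi\in H^1(\R^3)$ and the jump $[\partial_\n\psi]_\Gamma=\pm m\mathcal{B}\psi|_\Gamma$. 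Interior elliptic regularity immediately yields $\psi\in H^2_{\rm loc}(\Omega)\cap H^2_{\rm loc}(\Omega')$, so the remaining task is a boundary estimate uniform in $m$. I would view the interior equation as a Dirichlet problem with data $\psi|_\Gamma$ and use the exterior equation to produce a Dirichlet-to-Neumann operator whose norm, thanks to the Agmon-type exponential decay of $\psi$ in $\Omega'$ from Proposition \ref{lem:agmon1}, is controlled independently of $m$; combined with the transmission jump this upgrades $\psi|_\Gamma$ from $H^{1/2}(\Gamma)$ to $H^{3/2}(\Gamma)$, after which a Dirichlet $H^2$-estimate for the interior Helmholtz problem closes the argument. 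The delicate step, and the principal source of technical work, is precisely this uniform-in-$m$ control of the exterior Dirichlet-to-Neumann map.
\end{proof-sketch}
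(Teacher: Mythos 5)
Your plan for $H^\Omega$ and $L_m^{\rm int}$ via Lopatinski\u\i--Shapiro ellipticity for the squared operator is sensible, and the paper does not spell these out (the appendix only sketches the $H_m$ case). For $H_m$, however, your route differs from the paper's and contains a genuine gap. The paper's argument stays entirely at the level of the first-order operator: in the half-space model the tangential difference quotients $D^h_j$ ($j=1,2$) commute with $H_m$ because the potential $m_0+m\chi_{\Omega'}$ depends only on $x_3$, so one simply reapplies the coercivity estimate coming from \eqref{eq:quadIM} and Proposition \ref{lem:ext} to $D^h_j u$, and then recovers $\partial_3^2 u$ on $\Omega$ from the equation alone. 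No squaring, no transmission problem, no exterior Dirichlet-to-Neumann map appear, which is precisely what makes the uniformity in $m$ transparent.

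The step that fails in your sketch is the assertion that the exterior Dirichlet-to-Neumann operator is ``controlled independently of $m$'' thanks to Agmon decay. It is not: for $-\Delta+(m+m_0)^2-\lambda^2$ in the flat model the D-to-N symbol is $\sqrt{|\xi|^2+(m+m_0)^2-\lambda^2}$, whose $H^{1/2}(\Gamma)\to H^{-1/2}(\Gamma)$ norm grows like $m$. Exponential decay of the exterior profile $e^{-(m+m_0)t}$ controls $\|\psi\|_{L^2(\Omega')}$ but makes the normal derivative at $\Gamma$ of order $m$ rather than $O(1)$; moreover Proposition \ref{lem:agmon1} as stated applies to the minimizers $u_m(v)$ of the exterior variational problem, not directly to the exterior restriction of an eigenfunction of $H_m$. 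Since the transmission jump $[\partial_\n\psi]=\pm m\,\mathcal{B}\psi|_\Gamma$ is also $O(m)$, a correct version of your argument would have to exhibit the cancellation between these two $O(m)$ quantities (on the $\Xi^+$ component) together with an $O(m^{-1})$ bound on $\Xi^-\psi|_\Gamma$, instead of bounding either quantity separately. As written the sketch claims the opposite, so the crucial uniform-in-$m$ control does not follow. This is exactly the bookkeeping that the paper's tangential difference-quotient argument avoids.
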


\subsubsection{Upper bound}
In this section, we prove the following lemma.
\begin{lemma}
Let $\lambda$ be an eigenvalue of $|H^\Omega|$ of multiplicity $k_1\in \N^*$. Let $k_0\in \N$ be the unique integer such that  
\[
	\lambda = \lambda_{k_0+1} = \dots = \lambda_{k_0+k_1}.
\]
We have
\begin{equation}\label{eq:bsconclusion}
	\limsup_{m\to+\infty} m(\lambda_{k_0+k,m}^2-\lambda^2)\leq\widetilde\nu_{\lambda,k}\,.
\end{equation}
where
\begin{equation}\label{eq:bsminmax} 
	\widetilde \nu_{\lambda,k}
	:=
	\inf_{\tiny
	\begin{array}{c}
		V\subset \ker(|H^\Omega|-\lambda \rm{Id})\,,\\
		\dim V = k\,,
	\end{array}
	}
	\sup_{\tiny
	\begin{array}{c}
		v\in V\,,\\
		\norm{v}_{L^2(\Omega)}=1\,,
	\end{array}
	}
	 \widetilde \eta_\lambda(v)\,,
\end{equation}
and
\[
\widetilde \eta_\lambda(v) := \int_\Gamma \left(\frac{|\nabla_s v|^2}{2} -\frac{|(\pa_\n+\kappa/2+m_0)v|^2}{2}+ \left(\frac{K}{2} - \frac{\kappa^2}{8}-\frac{\lambda^2}{2}\right)|v|^2\right)\dx \Gamma\,,
\]
for $k\in \{1,\dots,k_1\}$.
\end{lemma}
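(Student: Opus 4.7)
The plan is to upper bound $\lambda_{k_0+k,m}^2$ via the min-max principle by exhibiting a $(k_0+k)$-dimensional test subspace of $H^1(\R^3;\C^4)$ whose maximal Rayleigh quotient for $H_m$ equals $\lambda^2+\widetilde\nu_{\lambda,k}/m+o(1/m)$. For the bottom $k_0$ modes I would take eigenfunctions $\varphi_j$ of $|H^\Omega|$ with $\lambda_j<\lambda$ and extend them to $\R^3$ by the exterior minimizer from Proposition~\ref{prop:ext}; Lemma~\ref{lem:quadf1} combined with Proposition~\ref{prop:ext}~(i) gives Rayleigh quotients $\lambda_j^2+o(1)<\lambda^2$ for large $m$, so these modes are harmless.

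The top $k$ modes are built from a subspace $V_k^{\ast}\subset\ker(|H^\Omega|-\lambda)$ realising $\widetilde\nu_{\lambda,k}=\sup_{v\in V_k^{\ast},\,\|v\|_{L^2(\Omega)}=1}\widetilde\eta_\lambda(v)$. For each $v\in V_k^{\ast}$ I would set $\psi_v^m=v+m^{-1}w_v$ on $\Omega$ and $\psi_v^m=u_{m+m_0}(\psi_v^m|_\Gamma)$ on $\Omega'$, where $w_v\in H^2(\Omega;\C^4)$ depends linearly on $v$ and is chosen so that on $\Gamma$
\[
\Xi^+w_v=0,\qquad \Xi^-w_v=-\tfrac12\,\Xi^-(\pa_\n+\kappa/2+m_0)v.
\]
Such a lift exists because $v\in H^2(\Omega)$ by Lemma~\ref{lem:regu}. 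This Ansatz mimics the first-order boundary-layer expansion of an eigenfunction of $L_m^{\rm int}$ in the cluster near $\lambda^2$ and is the mechanism that produces the Robin-type contribution $-\|(\pa_\n+\kappa/2+m_0)v\|^2_{L^2(\Gamma)}/2$ inside $\widetilde\eta_\lambda$.

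With $\psi_v^m$ so defined, I would split $\|H_m\psi_v^m\|^2_{L^2(\R^3)}$ via Lemma~\ref{lem:quadf1} and integrate by parts on $\Omega$, using the identity $\Xi^+(\pa_\n+\kappa/2+m_0)v=0$ valid for every eigenfunction of $|H^\Omega|$ (already invoked in Lemma~\ref{lem:asympt_intlem1}) together with the relations $\Xi^+v=v$ and $\Xi^+\Xi^-=0$: the bulk contribution plus $\pm m\|\Xi^\pm\psi_v^m\|^2_{L^2(\Gamma)}$ simplifies to $\lambda^2-\|(\pa_\n+\kappa/2+m_0)v\|^2_{L^2(\Gamma)}/(2m)+o(1/m)$, once the leading $m\|v\|^2_{L^2(\Gamma)}$ supplied by Proposition~\ref{prop:ext}~(ii) cancels the $-m\|\Xi^+\psi_v^m\|^2_{L^2(\Gamma)}$ term and the subleading traces $\int_\Gamma(\kappa/2+m_0)|v|^2\dx\Gamma$ cancel those coming from the interior integration by parts. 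Proposition~\ref{prop:ext}~(ii) also supplies the tangential correction $m^{-1}\int_\Gamma(|\nabla_sv|^2/2+(K/2-\kappa^2/8)|v|^2)\dx\Gamma$, while Proposition~\ref{prop:ext}~(iii) yields $\|\psi_v^m\|^2_{L^2(\R^3)}=1+\|v\|^2_{L^2(\Gamma)}/(2m)+o(1/m)$, producing the normalisation correction $-\lambda^2\|v\|^2_{L^2(\Gamma)}/(2m)$ upon division. Summing, the Rayleigh quotient equals $\lambda^2+\widetilde\eta_\lambda(v)/m+o(1/m)$ uniformly in $v\in V_k^{\ast}$ with $\|v\|_{L^2(\Omega)}=1$, the uniformity following from the finite-dimensionality of $V_k^{\ast}$ and the linearity of $v\mapsto w_v$.

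The main obstacle is the $1/m$-order bookkeeping: one must check that the $w_v$-dependent terms at orders $1$ and $1/m$ either cancel (via the two projection identities above combined with the MIT-boundary identities $\Xi^-v=0$ and $\Xi^+(\pa_\n+\kappa/2+m_0)v=0$) or absorb into the $o(1/m)$ remainder, and that the off-diagonal entries of the Gram and $H_m$-energy matrices between the $\varphi_j$-block and the $\psi_{v_i}^m$-block are $o(1/m)$. The latter rests on the $L^2(\Omega)$-orthogonality of $H^\Omega$-eigenfunctions across distinct eigenvalues together with the $O(m^{-1/2})$ $L^2(\Omega')$-bound on the exterior extensions from Proposition~\ref{prop:ext}~(iii). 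Once these checks are in place, applying the min-max principle to the resulting $(k_0+k)$-dimensional subspace and optimising $v\mapsto\widetilde\eta_\lambda(v)$ over the unit sphere of $V_k^{\ast}$ yields \eqref{eq:bsconclusion}.
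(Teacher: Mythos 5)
Your route is genuinely different from the paper's. The paper builds the test subspace from \emph{exact eigenfunctions} $u_{j,m}$ of the Robin-type Laplacian $L^{\rm int}_m$: because $\mathcal{Q}_m^{\rm int}$ diagonalizes in that basis, the diagonal part is read off directly from Proposition~\ref{lem:asym_eigen_int}, the off-diagonal $\mathcal{Q}_m^{\rm int}$-entries vanish identically, and the $-\tfrac12\|(\pa_\n+\kappa/2+m_0)v\|^2_{L^2(\Gamma)}$ correction is imported from the already-proven Theorem~\ref{thm:asym_eigen_int-0} (at the price of a subsequence extraction and a unitary rotation $B$ to align the limiting $L^{\rm int}_m$-eigenspaces with a basis that diagonalizes $\widetilde\eta_\lambda$). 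You instead work directly with $|H^\Omega|$-eigenfunctions $v$ and restore the Robin behavior by hand through the ansatz $v+m^{-1}w_v$ with $\Xi^+w_v=0$, $\Xi^-w_v=-\tfrac12\,\Xi^-(\pa_\n+\kappa/2+m_0)v$. This is sound: using $(H^\Omega)^2=-\Delta+m_0^2$ on the domain, $\Xi^- v=0$, $\Xi^+(\pa_\n+\kappa/2+m_0)v=0$, and the expansion of $\Lambda_{m+m_0}$ from Proposition~\ref{prop:ext}\,(ii), the $O(m)$ and $O(1)$ orders cancel against $-m\|\Xi^+\psi_v^m\|^2_{L^2(\Gamma)}$ and $\mathcal{Q}^{\rm int}(v)=\lambda^2$, and the $1/m$ order collects to $\widetilde\eta_\lambda(v)$ (including the $+m^{-1}\|\Xi^-w_v\|^2_{L^2(\Gamma)}$ from $m\|\Xi^-\psi_v^m\|^2$ and the $+m^{-1}\|w_v\|^2_{L^2(\Gamma)}$ from the $(m+m_0)\|\psi_v^m\|^2_\Gamma$ expansion, each equal to $\tfrac14\|(\pa_\n+\kappa/2+m_0)v\|^2_\Gamma$). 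Your approach bypasses Proposition~\ref{lem:asym_eigen_int} for this upper bound and needs no subsequence nor rotation $B$, but it trades the automatic orthogonality of the $L^{\rm int}_m$-eigenbasis for explicit $1/m$-order bookkeeping.

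Two of your stated intermediate estimates are not quite right, though neither kills the argument. First, $\|\psi_v^m\|^2_{L^2(\R^3)}=1+\|v\|^2_{L^2(\Gamma)}/(2m)+o(1/m)$ omits the interior cross term $2m^{-1}\Re\langle v,w_v\rangle_{L^2(\Omega)}$, and you cannot generically kill it by orthogonalizing $w_v$ against $v$ while also keeping $\Xi^+w_v=0$ on $\Gamma$ (subtracting $cv$ perturbs $\Xi^+w_v$ by $-cv$). The saving grace is that the same $2\lambda^2 m^{-1}\Re\langle v,w_v\rangle_\Omega$ appears in $\|H_m\psi_v^m\|^2$ via the Green identity, so the two cancel in the Rayleigh quotient; but the displayed normalization formula is false as written and should be stated with the extra term. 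Second, and more substantively, the off-diagonal entries of the Gram and $\|H_m\cdot\|^2$-energy matrices between the $\widetilde\varphi_j$-block and the $\psi_{v_i}^m$-block are \emph{not} $o(1/m)$: the exterior inner product is $O(m^{-1/2})\cdot O(m^{-1/2})=O(1/m)$ exactly, the interior Gram entry is $m^{-1}\langle\varphi_j,w_{v_i}\rangle_\Omega=O(1/m)$, and after the $O(1)$ cancellation (which uses $\Xi^+(\pa_\n+\kappa/2+m_0)\varphi_j=0$, $\Xi^+w_{v_i}=0$, and the $\kappa/2$ boundary terms from the exterior expansion) the energy off-diagonal is again $O(1/m)$, not smaller. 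The orthogonality-plus-trace mechanism you invoke therefore delivers only $O(1/m)$. This is still enough, but for a different reason: the spectral gap $\lambda^2-\lambda_j^2$ is of order $1$, so $O(1/m)$ coupling between the two blocks shifts the top generalized eigenvalue by $O(1/m^2)=o(1/m)$. That second-order perturbation argument is the missing step; without it, $O(1/m)$ off-diagonals by themselves would not let you conclude \eqref{eq:bsconclusion}.
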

\begin{proof}
Let $(u_{1,m}, \dots,u_{k_0+k_1,m})$ be an $L^2$-orthonormal family of eigenvectors of $L^{\rm int}_m$ associated with the eigenvalues $(\lambda^{\rm int}_{1,m},\dots,\lambda^{\rm int}_{k_0+k_1,m})$. 
Let $(m_n)_{n\in \N}$ be a subsequence which goes to $+\infty$ as $n$ tends to $+\infty$ and which satisfies
\begin{enumerate}[\rm (i)]
	\item $\limsup_{m\to+\infty}m(\lambda_{k_0+k,m}^2-\lambda^2) = \lim_{n\to+\infty}m_n(\lambda_{k_0+k,m_n}^2-\lambda^2)$,
	\item $(u_{1,m_n},\dots,u_{k_0+k_1,m_n})$ converges in $L^2(\Omega)$ to $(u_1,\dots,u_{k_0+k_1})$,
\end{enumerate}
where $(u_1,\dots,u_{k_0+k_1})$ is an $L^2$-orthonormal family  of eigenvectors of $H^\Omega$ associated with the eigenvalues $(\lambda_1,\dots,\lambda_{k_0+k_1})$.
By Lemma \ref{lem:regu}, this sequence is uniformly bounded in $H^2(\Omega)$. By interpolation, the convergence also holds in $H^s(\Omega)$ for all $s\in[0,2)$.
Since \eqref{eq:bsminmax} is a finite dimensional spectral problem, there exists an $L^2$-orthonormal basis $(w_{k_0+1},\dots,w_{k_0+k_1})$ of $\ker(|H^\Omega|-\lambda{\rm Id})$ such that
\[
	\widetilde{\eta}_\lambda\left(\sum_{s=k_0+1}^{k_0+k_1}a_sw_s\right) = \sum_{s=k_0+1}^{k_0+k_1}|a_s|^2\widetilde{\eta}_\lambda(w_s) =  \sum_{s=k_0+1}^{k_0+k_1}|a_s|^2\widetilde \nu_{\lambda,s-k_0}\,,
\]
for all $a_{k_0+1},\dots,a_{k_0+k_1}\in \C$.
Moreover, we have
\[
	\ker(|H^\Omega|-\lambda{\rm Id}) = {\rm span}(u_{k_0+1},\dots u_{k_0+k_1}) = {\rm span}(w_{k_0+1},\dots w_{k_0+k_1})\,,
\]
so that there exists a unitary matrix $B\in \C^{k_1\times k_1}$ such that $Bu = w$ where $u = (u_{k_0+1},\dots,u_{k_0+k_1})^T$ and $w = (w_{k_0+1},\dots,w_{k_0+k_1})^T$.
Using Proposition \ref{prop:ext}, we extend these functions outside $\Omega$ by
\[
	\widetilde u_{j,m} = \begin{cases}
		u_{j,m}& \mbox{ on } \Omega\,,\\
		u_{m+m_0}(u_{j,m}) &\mbox{ on } \Omega'\,,
		\end{cases}
\]
for $j\in\{1,\dots,k_0+k_1\}$.
We also define 
\[\begin{split}
	&u_m := (u_{k_0+1,m},\dots,u_{k_0+k_1,m})^T\\
	&w_{m} = (w_{k_0+1,m},\dots,w_{k_0+k_1,m})^T := Bu_m\\
	&\widetilde w_{m} = (\widetilde w_{k_0+1,m},\dots,\widetilde w_{k_0+k_1,m})^T := B(\widetilde u_{k_0+1,m},\dots,\widetilde u_{k_0+k_1,m})^T\,,
\end{split}\]
and
\[\begin{split}
	&V_{k_0+k,m} = {\rm span}(u_{1,m},\dots u_{k_0,m}, w_{k_0+1,m},\dots, w_{k_0+k})\,,
	\\
	&\widetilde{V}_{k_0+k,m} = {\rm span}( \widetilde{u}_{1,m},\dots  \widetilde{u}_{k_0,m},  \widetilde{w}_{k_0+1,m},\dots,  \widetilde{w}_{k_0+k})\,,
\end{split}\]
for all $k\in\{k_0+1,\dots,k_0+k_1\}$ and all $m\geq m_1$. Let us remark that
\[
	\dim V_{k_0+k,m} = \dim \widetilde{V}_{k_0+k,m} = k_0+k
\]
for all $k\in\{1,\dots,k_1\}$  (choosing if necessary a larger constant $m_1>0$). 
In the following, we consider test functions of the form
\[
	v_m = \sum_{j=1}^{k_0}a_j\widetilde u_{j,m}+\sum_{j=k_0+1}^{k_0+k_1}a_j\widetilde w_{j,m}\,,
\]
where $a_{1},\dots, a_{k_0+k_1}\in\C$ satisfies $\sum_{j=1}^{k_0+k_1}|a_j|^2 = 1$ so that 
\[
	\|v_m\|_{L^2(\Omega)}^2 = \sum_{j=1}^{k_0+k_1}|a_j|^2 = 1\,.
\]
 By Proposition \ref{prop:ext}, we have 
 \begin{equation}\label{eq:bs2}
 	\norm{v_m}_{L^2(\R^3)}^2 = \norm{v_m}_{L^2(\Omega)}^2 + \norm{v_m}_{L^2(\Omega')}^2
	= 1 + \frac{\norm{v_m}_{L^2(\Gamma)}^2}{2m} + \mathscr{O}(m^{-2}) \,,
 \end{equation}
 and
 \begin{equation}\label{eq:bs3}\begin{split}
 	&\norm{H_m v_m}_{L^2(\R^3)}^2 = \mathcal{Q}^{\rm int}_{m}(v_m) + m^{-1}\int_\Gamma \left(\frac{|\nabla_s v_m|^2}{2} + \left(\frac{K}{2} - \frac{\kappa^2}{8}\right)|v_m|^2\right)\dx \Gamma + \mathscr{O}(m^{-3/2})\,.
	\end{split}
 \end{equation}
From \eqref{eq:bs2} and \eqref{eq:bs3}, we deduce that
\begin{multline*}
m\left(\frac{\norm{H_m v_m}_{L^2(\R^3)}^2}{\norm{v_m}_{L^2(\R^3)}^2}-\lambda^2\right)
\leq m\left(\mathcal{Q}^{\rm int}_m(v_m)-\lambda^2\right)
\\
+ \int_\Gamma \left(\frac{|\nabla_s v_m|^2}{2} + \left(\frac{K}{2} - \frac{\kappa^2}{8}- \frac{\mathcal{Q}_m^{\rm int}(v_m)}{2}\right)|v_m|^2\right)\dx \Gamma + \mathscr{O}(m^{-1/2})\,.
\end{multline*}
For $k\in\{1,\dots,k_1\}$, we get
\begin{equation}\label{eq:bs1}\begin{split}
	&
	m\left(\lambda_{k_0+k,m}^2-\lambda^2\right)
	\\&
	\leq
	\sup_{\tiny
	\begin{array}{c}
		v_m\in \widetilde{V}_{k_0+k,m}\setminus \{0\}\,,
	\end{array}
	}
	m\left(\frac{\norm{H_m v_m}_{L^2(\R^3)}^2}{\norm{v_m}_{L^2(\R^3)}^2}-\lambda^2\right)
	\\&
	\leq
	\sup_{\tiny
	\begin{array}{c}
		v_m\in {V}_{k_0+k,m}\,,\\
		\norm{v_m}_{L^2(\Omega)}=1
	\end{array}
	}
	m\left(\mathcal{Q}^{\rm int}_m(v_m)-\lambda^2\right) + \overline{\eta}_m(v_m) + \mathscr{O}(m^{-1/2})\,,
\end{split}\end{equation}
where
\[
	\overline{\eta}_m(v) := \int_\Gamma \left(\frac{|\nabla_s v|^2}{2} + \left(\frac{K}{2} - \frac{\kappa^2}{8}-\frac{\mathcal{Q}_m^{\rm int}(v)}{2}\right)|v|^2\right)\dx \Gamma\,.
\]
The remaining of the proof concerns the asymptotic behavior of
\[
	\mu_{k,m} := \sup_{\tiny
	\begin{array}{c}
		v_m\in {V}_{k_0+k,m}\,,\\
		\norm{v_m}_{L^2(\Omega)}=1
	\end{array}
	}
	m\left(\mathcal{Q}^{\rm int}_m(v_m)-\lambda^2\right) + \overline{\eta}_m(v_m)\,,
\]
for $k\in\{1,\dots, k_1\}$ when $m$ goes to $+\infty$. Let us first remark that for any $v_m\in V_{k_0+k,m}$, we have
\[\begin{split}
	&v_m = \sum_{j=1}^{k_0}a_j u_{j,m}+\sum_{j=k_0+1}^{k_0+k}a_j w_{j,m} = \sum_{j=1}^{k_0}a_j u_{j,m}+\sum_{s=k_0+1}^{k_0+k_1}\left(\sum_{j=k_0+1}^{k_0+k}a_jb_{j,s} \right)u_{s,m}\,,
\end{split}\]
where $(b_{j,s})_{j,s\in \{k_0+1,\dots,k_0+k_1\}} = B$.
With Proposition \ref{lem:asym_eigen_int}, we obtain
\begin{equation}\label{eq:bs4}\begin{split}
	&
	m_n\left(\mathcal{Q}^{\rm int}_{m_n}(v_{m_n})-\lambda^2\right)
	\\&
	= \sum_{j=1}^{k_0}m_n(\lambda_{j,m_n}^{\rm int}-\lambda^2)|a_j|^2
	+
	\sum_{j=k_0+1}^{k_0+k_1}m_n(\lambda_{j,m_n}^{\rm int}-\lambda^2)\left|\sum_{s=k_0+1}^{k_0+k}a_sb_{s,j} \right|^2
	\\
	&
	=\sum_{j=1}^{k_0}m_n(\lambda_{j,m_n}^{\rm int}-\lambda^2)|a_j|^2
	-\frac{\norm{\left(\pa_n+\kappa/2+m_0\right)\sum_{j=k_0+1}^{k_0+k}a_j w_{j}}_{L^2(\Gamma)}^2}{2}
	+o(1)\,.
\end{split}\end{equation}
Using \eqref{eq:bs1} and \eqref{eq:bs4} and taking $a_1= \dots= a_{k_0+k-1} = 0$, $a_{k_0+k}= 1$, we deduce that
\begin{equation}\label{eq:bs5}
	\liminf_{n\to+\infty}\mu_{k,m_n}\geq \widetilde \nu_{\lambda,k}\,.
\end{equation}
Let $(v^n)_{n\in \N}$ be a sequence of maximizer of $\mu_{k,m_n}$. For all $n$, there exists a unitary vector $a^n =(a_{1,n},\dots, a_{k_0+k,n})\in \C^{k_0+k} $ such that
\[
	v^n = \sum_{j=1}^{k_0}a_{j,n}u_{j,m_n} + \sum_{j=k_0+1}^{k_0+k}a_{j,n}w_{j,m_n}\,.
\]
Up to a subsequence, we can assume that $(a^n)$ converges in $\C^{k_0+k}$ to a unitary vector $a = (a_{k_0+1},\dots,a_{k_0+k})$.
Proposition \ref{lem:asym_eigen_int}, \eqref{eq:bs4} and \eqref{eq:bs5} ensure that 
\[
	\lim_{n\to+\infty}\lambda_{j,m_n}^{\rm int}-\lambda^2 \leq \lambda_j^2-\lambda^2<0
\] for $j\in\{1,\dots,k_0\}$ so that there exists $c_0>0$ such that
\[
	m_n\sum_{j=1}^{k_0}|a_{j,n}|^2\leq c_0\,
\]
and
\[
	\limsup_{n\to+\infty}\mu_{k,m_n}\leq \widetilde \eta_\lambda(v)\leq \widetilde \nu_{\lambda,k}
\]
where $v = \sum_{j=k_0+1}^{k_0+k}a_jw_j$. With \eqref{eq:bs1}, we conclude noticing that $\lim_{n\to+\infty}\mu_{k,m_n}=\widetilde \nu_{\lambda,k}$ and
\[\limsup_{m\to+\infty} m(\lambda_{k_0+k,m}^2-\lambda^2)\leq\widetilde\nu_{\lambda,k}\,.
\]
\end{proof}
\subsubsection{Lower bound}
In the following, we look for the second term in the asymptotic expansions of the eigenvalues. More precisely, we will show the following lemma.
\begin{lemma}
	We have for all $k\in \{1,\dots,k_1\}$ that
	\[
	\liminf_{m\to+\infty}m(\lambda_{k,m}^2-\lambda_1^2)
	\geq  \widetilde \nu_{\lambda_1,j}\,,
\]
where $\widetilde \nu_{\lambda_1,j}$ is defined in \eqref{eq:bsminmax}.
\end{lemma}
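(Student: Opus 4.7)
The plan is to establish a sharp diagonal lower bound $\liminf_{m\to\infty}m(\lambda_{k,m}^2-\lambda_1^2)\geq \widetilde\eta_{\lambda_1}(\varphi_k)$ for a specific limiting orthonormal basis $(\varphi_j)_{j=1}^{k_1}$ of $\ker(|H^\Omega|-\lambda_1)$, and then to upgrade this diagonal bound to the sharp one $\widetilde\nu_{\lambda_1,k}$ via a trace identity combined with the upper bound \eqref{eq:bsconclusion} already established.

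First, I take $L^2$-orthonormal families $(\varphi_{j,m})_{j=1}^{k_1}$ of eigenvectors of $|H_m|$ associated with $(\lambda_{j,m})_{j=1}^{k_1}$ and $(u_{l,m})_{l=1}^{k_1}$ of eigenvectors of $L_m^{\rm int}$ associated with $(\lambda_{l,m}^{\rm int})_{l=1}^{k_1}$. Lemmas \ref{lem:induc1} and \ref{lem:induc2}, combined with the $H^2$ regularity of Lemma \ref{lem:regu} and interpolation, let me extract a diagonal subsequence along which both families converge strongly in $H^2(\Omega)$ to $L^2$-orthonormal bases $(\varphi_j)_{j=1}^{k_1}$ and $(u_l)_{l=1}^{k_1}$ of $\ker(|H^\Omega|-\lambda_1)$, and (after a further extraction) such that $a_j:=\lim_{m\to\infty}m(\lambda_{j,m}^2-\lambda_1^2)$ exists for every $j\in\{1,\dots,k_1\}$.

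Second, I combine \eqref{eq:quadIM} applied to $\varphi_{k,m}$ with the refined exterior estimate of Proposition \ref{prop:ext}~(ii) (with $m$ replaced by $m+m_0$) and the non-negativity of the defect $\mathcal{Q}_{m+m_0}(\varphi_{k,m}-u_{m+m_0}(\varphi_{k,m}))\geq 0$, to obtain
\[
\lambda_{k,m}^2\geq \mathcal{Q}_m^{\rm int}(\varphi_{k,m})+\frac{1}{m+m_0}\int_\Gamma \widetilde\eta'(\varphi_{k,m})\dx\Gamma - \frac{C}{m^{3/2}}\|\varphi_{k,m}\|_{H^{3/2}(\Gamma)}^2,
\]
where $\widetilde\eta'(v):=\tfrac{1}{2}|\nabla_s v|^2+\bigl(\tfrac{K}{2}-\tfrac{\kappa^2}{8}\bigr)|v|^2$. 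Expanding $\varphi_{k,m}=\sum_l\alpha_l^m u_{l,m}$ in $L^2(\Omega)$ yields $\mathcal{Q}_m^{\rm int}(\varphi_{k,m})=\sum_l|\alpha_l^m|^2\lambda_{l,m}^{\rm int}$ with $\sum_l|\alpha_l^m|^2=1-\|\varphi_{k,m}\|_{L^2(\Omega')}^2$, hence
\[
m\bigl[\mathcal{Q}_m^{\rm int}(\varphi_{k,m})-\lambda_1^2\bigr]=\sum_l|\alpha_l^m|^2\, m(\lambda_{l,m}^{\rm int}-\lambda_1^2)-\lambda_1^2\, m\|\varphi_{k,m}\|_{L^2(\Omega')}^2.
\]
Passing to the $\liminf$: the terms with $l>k_1$ are non-negative since $\lambda_{l,m}^{\rm int}\to\lambda_l^2>\lambda_1^2$; those with $l\leq k_1$ converge to $|\langle\varphi_k,u_l\rangle|^2\mu_{\lambda_1,l}$ by Proposition \ref{lem:asym_eigen_int}~(ii); Proposition \ref{prop:ext}~(iii) together with \eqref{eq:estil2tot} yields $m\|\varphi_{k,m}\|_{L^2(\Omega')}^2\to \tfrac{1}{2}\|\varphi_k\|_{L^2(\Gamma)}^2$; and Proposition \ref{lem:asym_eigen_int}~(iii) applied to $v=\varphi_k$ identifies $\sum_{l\leq k_1}|\langle\varphi_k,u_l\rangle|^2\mu_{\lambda_1,l}=-\tfrac{1}{2}\|(\pa_\n+\kappa/2+m_0)\varphi_k\|_{L^2(\Gamma)}^2$. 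Combining everything with $\int_\Gamma\widetilde\eta'(\varphi_{k,m})\dx\Gamma\to\int_\Gamma\widetilde\eta'(\varphi_k)\dx\Gamma$ reproduces precisely $\widetilde\eta_{\lambda_1}(\varphi_k)$, giving $a_k\geq\widetilde\eta_{\lambda_1}(\varphi_k)$.

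Third, I upgrade to the sharp bound by a trace argument. The upper bound \eqref{eq:bsconclusion} delivers $a_j\leq \widetilde\nu_{\lambda_1,j}$ for every $j$, while the previous step gives $a_j\geq \widetilde\eta_{\lambda_1}(\varphi_j)$. Since $(\varphi_j)_{j=1}^{k_1}$ is an $L^2$-orthonormal basis of $\ker(|H^\Omega|-\lambda_1)$, summing yields
\[
\sum_{j=1}^{k_1}\widetilde\eta_{\lambda_1}(\varphi_j)\leq \sum_{j=1}^{k_1}a_j\leq \sum_{j=1}^{k_1}\widetilde\nu_{\lambda_1,j},
\]
but the leftmost and rightmost sums both equal $\mathrm{tr}\bigl(\widetilde\eta_{\lambda_1}|_{\ker(|H^\Omega|-\lambda_1)}\bigr)$, so all inequalities must be equalities and $a_k=\widetilde\nu_{\lambda_1,k}$. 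This holds along any subsequence along which the $a_j$ exist, so the full $\liminf$ equals $\widetilde\nu_{\lambda_1,k}$. The main obstacle is the bookkeeping in the second step: one must ensure that the $O(1)$ normalization correction $-\lambda_1^2 m\|\varphi_{k,m}\|_{L^2(\Omega')}^2$ (genuinely of order $1$, since $\|\varphi_{k,m}\|_{L^2(\Omega')}^2$ is only of order $1/m$) combines exactly with the $L_m^{\rm int}$ spectral contribution and with $\int_\Gamma\widetilde\eta'(\varphi_k)\dx\Gamma$ to produce $\widetilde\eta_{\lambda_1}(\varphi_k)$, with no leftover term of the form $\tfrac{\lambda_1^2}{2}\|\varphi_k\|_{L^2(\Gamma)}^2$.
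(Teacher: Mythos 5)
Your proof is correct, and the diagonal estimate in your second step is essentially identical to the paper's chain \eqref{eq:secest}--\eqref{eq:limlbfin}: the same use of Proposition~\ref{prop:ext}, the same expansion of $\varphi_{k,m}$ in the $L^2$-eigenbasis of $L_m^{\rm int}$, and the same invocation of Proposition~\ref{lem:asym_eigen_int}~(ii)--(iii), including the delicate cancellation of the $-\lambda_1^2 m\|\varphi_{k,m}\|_{L^2(\Omega')}^2$ term against $\tfrac{\lambda_1^2}{2}\|\varphi_k\|_{L^2(\Gamma)}^2$. Where you genuinely diverge is in the upgrade from the diagonal bound $a_k\geq \widetilde\eta_{\lambda_1}(\varphi_k)$ to the min-max value $\widetilde\nu_{\lambda_1,k}$. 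The paper works with linear combinations $\varphi_m^a=\sum_{j\leq k}a_j\varphi_{j,m}$ for arbitrary unit vectors $a\in\C^k$, obtains $\liminf_m m(\lambda_{k,m}^2-\lambda_1^2)\geq \widetilde\eta_{\lambda_1}(\varphi^a)$ for every such $a$, and then takes the supremum over $a$, which is $\geq \widetilde\nu_{\lambda_1,k}$ because $\mathrm{span}(\varphi_1,\dots,\varphi_k)$ is a competitor in the min-max. You instead keep $a$ fixed on the diagonal and squeeze: you use the already-proved upper bound $a_j\leq\widetilde\nu_{\lambda_1,j}$ for every $j$, the diagonal lower bound $a_j\geq\widetilde\eta_{\lambda_1}(\varphi_j)$, and the basis-independence of the trace of the Hermitian form $\widetilde\eta_{\lambda_1}$ on $\ker(|H^\Omega|-\lambda_1)$, which forces $a_j=\widetilde\nu_{\lambda_1,j}$ for every $j$. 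Both routes are valid. Yours is cleaner in that it avoids re-invoking the min-max and never needs to vary the coefficient vector; the trade-off is that the paper's sup-over-$a$ argument proves the lower bound \emph{without} relying on the upper bound, whereas yours genuinely needs \eqref{eq:bsconclusion} to have been established for \emph{all} indices $1,\dots,k_1$ (which it has). One small slip: the uniform $H^2(\Omega)$ bound from Lemma~\ref{lem:regu} plus compactness gives strong convergence in $H^s(\Omega)$ for $s<2$ (and hence in $H^{3/2}(\Gamma)$ by the trace theorem), not strong $H^2(\Omega)$ convergence as you state; this is the convergence you actually use, so the argument is unaffected.
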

\begin{proof}
Let $\lambda$ be the first eigenvalue of $|H^\Omega|$ whose multiplicity is denoted $k_1\in \N^*$: 
\[
	\lambda = \lambda_{1} = \dots = \lambda_{k_1}.
\]
By Lemma \ref{lem:induc2} and Proposition \ref{lem:asym_eigen_int}, we have
\[
	\lim_{m\to+\infty}\lambda_{k,m}^2  = \lim_{m\to+\infty}\lambda^{\rm int}_{k,m} = \lambda^2\,,
\]
for all $k\in \{1,\dots,k_1\}$.
Let $(\varphi_{1,m},\dots,\varphi_{k_1,m})$ an $L^2$-orthonormal family of eigenvectors of $|H_m|$ associated with the eigenvalues $(\lambda_{1,m},\dots,\lambda_{k_1,m})$ for all $m\geq m_1$.
By Lemma \ref{lem:regu}, there exists $C>0$ such that
\begin{equation}\label{eq:unifboundH2int}
	C\geq \sup_{\tiny\begin{array}{c}
	m\geq m_1,
	\\
	j\in\{1,\dots,k_1\},
	\end{array}}
	\norm{\varphi_{j,m}}_{H^2(\Omega)}.
\end{equation}
Let us remark that for all $k\in \{1,\dots,k_1\}$, and all $m\geq m_1$,
\[\begin{split}
	&
	\lambda_{k,m}^2 
	=
	\norm{H_m \varphi_{k,m}}_{L^2(\R^3)}^2
	=
	\sup_{
		\tiny
		\begin{array}{c}
			(a_{1},\dots,a_{k})\in\C^{k},
			\\
			\sum_{j = 1}^{k}|a_j|^2 = 1,
		\end{array}
	} 
	\norm{H_m \left(\sum_{j=1}^{k}a_j\varphi_{j,m}\right)}_{L^2(\R^3)}^2\,.
\end{split}\]
Let $a = (a_{1},\dots,a_{k})\in\C^{k}$ be such that $\sum_{j = 1}^{k}|a_j|^2 = 1$. We define
\[
	\varphi_{m}^a = \sum_{j=1}^{k}a_j\varphi_{j,m}.
\]
With \eqref{eq:quadIM}, \eqref{eq:unifboundH2int} and Proposition \ref{prop:ext}, we get
\begin{equation}\label{eq:secest}\begin{split}
	&
	\lambda_{k,m}^2 
	\geq \mathcal{Q}^{\rm int}_m\left(\varphi_{m}^a\right) 
	+ m^{-1}\int_\Gamma \left(\frac{|\nabla_s \varphi_{m}^a|^2}{2} + \left(\frac{K}{2} - \frac{\kappa^2}{8}\right)|\varphi_{m}^a|^2\right)\dx \Gamma
	\\
	&\qquad
	+(m+m_0)^2\norm{\varphi_{m}^a - u_{m+m_0}(\varphi_{m}^a)}_{L^2(\Omega')}^2
	+\mathscr{O}(m^{-3/2}).
\end{split}\end{equation}
By \eqref{eq:estil2tot}, we get
\[\begin{split}
	&\left|\norm{\varphi_{m}^a}_{L^2(\Omega')}^2-\norm{u_{m+m_0}(\varphi_{m}^a)}_{L^2(\Omega')}^2\right|
	\\&\qquad
	\leq 
	C/m\left(\norm{\varphi_{m}^a}_{L^2(\Omega')}+\norm{u_{m+m_0}(\varphi_{m}^a)}_{L^2(\Omega')}\right)
	\\
	&\qquad
	\leq C/m\left(\norm{\varphi_{m}^a - u_{m+m_0}(\varphi_{m}^a)}_{L^2(\Omega')}+2\norm{u_{m+m_0}( \varphi_{m}^a)}_{L^2(\Omega')}\right)
	\\
	&\qquad
	\leq C/m\left(m^{-1}+2\norm{u_{m+m_0}( \varphi_{m}^a)}_{L^2(\Omega')}\right)\,.
\end{split}\]
Using Proposition  \ref{prop:ext} and \eqref{eq:unifboundH2int}, we deduce that
\[
	\left|
		\norm{u_{m+m_0}(\varphi_{m}^a)}_{L^2(\Omega')}^2 - \frac{\norm{\varphi_{m}^a}_{L^2(\Gamma)}^2}{2m}
	\right|
	\leq \frac{C}{m^{3/2}}\,,
\]
so that
\begin{equation}\label{eq:normL2ext}
	\left|\norm{\varphi_{m}^a}_{L^2(\Omega')}^2- \frac{\norm{\varphi_{m}^a}_{L^2(\Gamma)}^2}{2m}\right|\leq \frac {C}{m^{3/2}}.
\end{equation}
With \eqref{eq:secest} and Proposition \ref{lem:asym_eigen_int}, we obtain
\begin{equation}\label{eq:thiest}\begin{split}
	&
	m(\lambda_{k,m}^2-\lambda^2) 
	\\
	&
	\geq m\left(\mathcal{Q}^{\rm int}_m(\varphi_{m}^a)-\lambda^2\norm{\varphi_{m}^a}_{L^2(\Omega)}^2\right) 
	\\
	&\qquad+ \int_\Gamma \left(\frac{|\nabla_s\varphi_{m}^a|^2}{2} + \left(\frac{K}{2} - \frac{\kappa^2}{8}-\frac{\lambda^2}{2}\right)|\varphi_{m}^a|^2\right)\dx \Gamma
	+\mathscr{O}(m^{-1/2})\,.
\end{split}\end{equation}
Let $(u_{j,m})_{j\in \N^*}$ be an $L^2$-orthonormal basis of $L^2(\Omega;\C^4)$ whose elements are eigenvectors of $L^{\rm int}_m$ associated with the sequence of eigenvalues $(\lambda_{j,m}^{\rm int})$.
Since $\lambda_{j,m}^{\rm int}$ converges to $\lambda_j^2$ as $m$ goes to $+\infty$, we get that
\[
	\lambda_{j,m}^{\rm int}-\lambda^2\geq 0,
\]
for all $j\geq k_1+1$ and all $m\geq m_1$ (choosing if necessary a larger constant $m_1>0$).
We deduce that
\begin{equation}\label{eq:limlb2}\begin{split}
	&
	m\left(\mathcal{Q}^{\rm int}_m(\varphi_{m}^a)-\lambda^2\norm{\varphi_{m}^a}_{L^2(\Omega)}^2\right)
	=
	\sum_{s=1}^{+\infty}
		m\left(\lambda_{s,m}^{\rm int}-\lambda^2\right)|\braket{\varphi_m^a, u_{s,m}}_{\Omega}|^2
	\\&
	\geq
	\sum_{s=1}^{k_1}
		m\left(\lambda_{s,m}^{\rm int}-\lambda^2\right)|\braket{\varphi_m^a, u_{s,m}}_{\Omega}|^2\,.	
\end{split}\end{equation}

Let $(m_n)_{n\in \N^*}$ be a subsequence which goes to $+\infty$ as $n$ tends to $+\infty$ and such that
\begin{enumerate}[\rm (i)]
	\item $\liminf_{m\to+\infty}m(\lambda_{k,m}^2-\lambda^2) = \lim_{n\to+\infty}m_n(\lambda_{k,m_n}^2-\lambda^2)$,
	\item $(u_{1,m_n},\dots,u_{k_1,m_n})$ converges in $H^1(\Omega)$ to $(u_1,\dots,u_{k_1})$,
	\item $(\varphi_{1,m_n},\dots,\varphi_{k_1,m_n})$ converges in $H^1(\Omega)$ to $(\varphi_1,\dots,\varphi_{k_1})$, \end{enumerate}
where $(u_1,\dots,u_{k_1})$ and $(\varphi_1,\dots,\varphi_{k_1})$ are $L^2$-orthonormal families  of eigenvectors of $H^\Omega$ associated with the eigenvalue $\lambda$.
By Proposition \ref{lem:asym_eigen_int}, we have that
\begin{equation}\label{eq:limlbfin}\begin{split}
	&\lim_{n\to+\infty}\sum_{s=1}^{k_1}
		m\left(\lambda_{s,m_n}^{\rm int}-\lambda^2\right)|\braket{\varphi_{m_n}^a, u_{s,m_n}}_{\Omega}|^2
	\\&
	\quad
	=
	\sum_{s=1}^{k_1}
		-\frac{\norm{(\pa_\n+\kappa/2+m_0)u_s}_{L^2(\Gamma)}^2}{2}|\braket{\varphi^a, u_{s}}_{\Omega}|^2
	=
	-\frac{\norm{(\pa_\n+\kappa/2+m_0)\varphi_a}_{L^2(\Gamma)}^2}{2}\,,
\end{split}\end{equation}
where $\varphi^a = \sum_{j=1}^{k}a_j\varphi_j$.
We get from \eqref{eq:thiest}, \eqref{eq:limlb2}, and \eqref{eq:limlbfin} that
\[
	\liminf_{m\to+\infty}m(\lambda_{k,m}^2-\lambda^2)
	\geq \widetilde \eta_\lambda(\varphi^a)\,,
\]
and
\[
	\liminf_{m\to+\infty}m(\lambda_{k,m}^2-\lambda^2)
	\geq
	\sup_{
		\tiny
		\begin{array}{c}
			(a_{1},\dots,a_k)\in \C^{k},
			\\
			\sum_{j=1}^{k}|a_j|^2 = 1,
		\end{array}
		}
		 \widetilde \eta_\lambda(\varphi^a)
	\geq  \widetilde \nu_{\lambda,j}\,.
\]
The conclusion follows from the upper bound \eqref{eq:bsconclusion}.
\end{proof}
\begin{remark}
	When considering a larger eigenvalue $\lambda > \lambda_{1}$, the proof above breaks down since
	\[
		\sum_{s=1}^{k_0}
		m\left(\lambda_{s,m}^{\rm int}-\lambda^2\right)|\braket{\varphi_m^a, u_{s,m}}_{\Omega}|^2
	\]
	is non positive and the non-wanted terms in \eqref{eq:limlb2} cannot be removed so easily anymore. Here $k_0$ denotes the unique integer such that
	\[
		\lambda = \lambda_{k_0+1} = \dots = \lambda_{k_0+k_1}.
	\] 
\end{remark}
\appendix
\section{Sketch of the proof of Lemma \ref{lem:regu}}
The purpose of this appendix is to give the main ideas of the proof of Lemma \ref{lem:regu}. We do not intend to give a rigorous proof but rather to enlighten why the classical arguments give uniform bounds in $m$  (see for instance \cite[Section 6.3]{evans1998partial}). In particular, we restrict ourselves to the operator $H_m$ for $\Omega : \R^3_+ = \{\x = (x_1,x_2,x_3) : x_3>0\}$ and consider the solution $u\in H^1(\R^3; \C^4)$ of 
\[
	H_m u = (\alpha\cdot D +(m_0+m\chi_{\R^3_-})\beta)u  = f\,,
\]
where $f\in H^1(\R^3;\C^4)$.
By Lemma \ref{lem:quadf1} and Proposition \ref{lem:ext}, we have
\begin{multline*}
	\norm{f}_{L^2(\R^3)}^2\geq \left(\norm{\nabla u}_{L^2(\Omega)}^2 +m_0^2\norm{u}_{L^2(\Omega)}^2 +m_0\norm{u}_{L^2(\Gamma)}^2+ \sum_{k=1}^2\norm{\pa_k u}_{L^2(\Omega')}^2 \right) \\
	+ 2m\norm{\Xi^- u}_{L^2(\Gamma)}^2 - C/m\norm{u}_{L^2(\Gamma)}^2\,,
\end{multline*}
so that by the trace theorem, there exists $C>0$ such that
\begin{equation}\label{lem.reguesti1}\begin{split}
	&C\left(\norm{f}_{L^2(\R^3)}^2+\norm{u}_{L^2(\Omega)}^2\right)\geq \norm{\nabla u}_{L^2(\Omega)}^2  + \sum_{k=1}^2\norm{\pa_k u}_{L^2(\Omega')}^2 \,.
\end{split}\end{equation}
Using the notation of \cite[Section 6.3]{evans1998partial}, we introduce the difference quotients
\[
	D^h_ku(\x) = \frac{u(\x+he_k)-u(\x)}{h}, \quad h\in \R, h\ne 0, \x\in\R^3, k\in \{1,2,3\}\,.
\]
For $j\in \{1,2\}$, we get that
\[
	H_m D^h_j u = (\alpha\cdot D +(m_0+m\chi_{\R^3_-})\beta)D^h_j u  =D^h_j f\,,
\]
so that using \eqref{lem.reguesti1}, we get
\begin{equation*}\begin{split}
	&C\left(\norm{D^h_j f}_{L^2(\R^3)}^2+\norm{D^h_j u}_{L^2(\Omega)}^2\right)\geq \norm{\nabla D^h_ju}_{L^2(\Omega)}^2  + \sum_{k=1}^2\norm{\pa_k D^h_j u}_{L^2(\Omega')}^2 \,.
\end{split}\end{equation*}
By \cite[Section 5.8.2]{evans1998partial}, we deduce that
\begin{equation}\label{lem.reguesti2}\begin{split}
	&C\left(\norm{\pa_j f}_{L^2(\R^3)}^2+\norm{\pa_j u}_{L^2(\Omega)}^2+\norm{ f}_{L^2(\R^3)}^2+\norm{ u}_{L^2(\Omega)}^2\right)\\&\qquad\geq \norm{\nabla \pa_ju}_{L^2(\Omega)}^2  + \sum_{k=1}^2\norm{\pa_k \pa_j u}_{L^2(\Omega')}^2 \,.
\end{split}\end{equation}
We also have that on $\Omega$,
\[
	-\pa_3^2u = H_m^2u+(\sum_{k=1}^2\pa_k^2-m_0^2)u = H_mf\,,
\]
so that
\begin{equation}\label{lem.reguesti3}
	\norm{\pa_3^2 u}_{L^2(\Omega)}\leq C\norm{f}_{H^1(\Omega)}.
\end{equation}
Using \eqref{lem.reguesti1}, \eqref{lem.reguesti2} and \eqref{lem.reguesti3}, we get the result.

\subsection*{Fundings}
N.~Arrizabalaga was partially supported by ERCEA Advanced Grant 669689-HADE, MTM2014-53145-P (MICINN, Gobierno de Espa\~na) and IT641-13 (DEUI, Gobierno Vasco). 
L.~Le Treust was partially supported by ANR DYRAQ ANR-17-CE40-0016-01. 
A.~Mas was partially supported by
MTM2017-84214 and MTM2017-83499 projects of the MCINN (Spain),
2017-SGR-358 project of the AGAUR (Catalunya), and ERC-2014-ADG project 
HADE Id.\! 669689 (European Research Council).
\bibliographystyle{abbrv}      
\bibliography{Bib_MIT}

\begin{thebibliography}{10}

\bibitem{PhysRevB.77.085423}
A.~R. Akhmerov and C.~W.~J. Beenakker.
\newblock Boundary conditions for dirac fermions on a terminated honeycomb
  lattice.
\newblock {\em Phys. Rev. B}, 77:085423, Feb 2008.

\bibitem{MR3663620}
N.~Arrizabalaga, L.~Le~Treust, and N.~Raymond.
\newblock On the {MIT} {B}ag {M}odel in the {N}on-relativistic {L}imit.
\newblock {\em Comm. Math. Phys.}, 354(2):641--669, 2017.

\bibitem{MR901725}
M.~V. Berry and R.~J. Mondragon.
\newblock Neutrino billiards: time-reversal symmetry-breaking without magnetic
  fields.
\newblock {\em Proc. Roy. Soc. London Ser. A}, 412(1842):53--74, 1987.

\bibitem{bogoliubov1987}
P.~Bogolioubov.
\newblock Sur un mod{\`e}le {\`a} quarks quasi-ind{\'e}pendants.
\newblock {\em Annales de l'I.H.P., section A}, 8:163--189, 1968.

\bibitem{MIT061974}
A.~Chodos, R.~L. Jaffe, K.~Johnson, C.~B. Thorn, and V.~F. Weisskopf.
\newblock New extended model of hadrons.
\newblock {\em Phys. Rev. D (3)}, 9(12):3471--3495, 1974.

\bibitem{PhysRevD.12.2060}
T.~DeGrand, R.~L. Jaffe, K.~Johnson, and J.~Kiskis.
\newblock Masses and other parameters of the light hadrons.
\newblock {\em Phys. Rev. D}, 12:2060--2076, Oct 1975.

\bibitem{evans1998partial}
L.~C. Evans.
\newblock {\em Partial differential equations}.
\newblock Providence, Rhode Land: American Mathematical Society, 1998.

\bibitem{johnson}
K.~Johnson.
\newblock The {MIT} bag model.
\newblock {\em Acta Phys. Pol.}, B(6):865--892, 1975.

\bibitem{stockmeyer2016infinite}
E.~Stockmeyer and S.~Vugalter.
\newblock {Infinite mass boundary conditions for Dirac operators}.
\newblock {\em arXiv preprint arXiv:1603.09657}, 2016.

\bibitem{Thaller1992}
B.~Thaller.
\newblock {\em The {D}irac equation}.
\newblock Texts and Monographs in Physics. Springer-Verlag, Berlin, 1992.

\end{thebibliography}

\end{document}